\setlist[itemize]{topsep=0pt}
\setlist[enumerate]{topsep=0pt}
\titleformat*{\section}{\centering\bfseries\small\MakeUppercase}
\titleformat*{\subsection}{\normalsize\bfseries}
\titlespacing\section{0pt}{6pt plus 4pt minus 2pt}{0pt plus 2pt minus 2pt}
\titlespacing{\subsection}{0pt}{-2pt}{-4pt}
\titlespacing{\subsubsection}{0pt}{-2pt}{-4pt}
\newtheoremstyle{exampstyle}
  {\topsep} 
  {0pt} 
  {\slshape} 
  {} 
  {\bfseries} 
  {.} 
  {.5em} 
  {} 
\theoremstyle{exampstyle}
\newtheorem{thm}{Theorem}[section]
\newtheorem{cor}[thm]{Corollary}
\newtheorem{lemma}[thm]{Lemma}
\newtheorem{prop}[thm]{Proposition}
\newtheoremstyle{def}
  {\topsep} 
  {0pt} 
  {} 
  {} 
  {\bfseries} 
  {.} 
  {.5em} 
  {} 
\theoremstyle{def}
\newtheorem{defn}[thm]{Definition}
\newtheorem{rmk}[thm]{Remark}
\newtheorem{exmp}[thm]{Example}
\newcommand{\comm}[1]{}
\newcommand{\Z}{\mathbb{Z}}
\newcommand{\e}{\varepsilon}
\newcommand{\myRed}[1]{\textbf{\textcolor{red}{#1}}}
\newcommand{\Mod}[1]{\ (\mathrm{mod}\ #1)}
\title{\large{\uppercase{\textbf{Twisted conjugacy in dihedral Artin groups II: Baumslag Solitar groups}} $\mathrm{BS}(n,n)$}}
\author{Gemma Crowe}
\date{}
\begin{document}

\maketitle
\begin{abstract}
In this second paper we solve the twisted conjugacy problem for even dihedral Artin groups, that is, groups with presentation $G(m) = \langle a,b \mid {}_{m}(a,b) =  {}_{m}(b,a) \rangle$, where $m \geq 2$ is even, and $_{m}(a,b)$ is the word $abab\dots$ of length $m$. Similar to odd dihedral Artin groups, we prove orbit decidability for all subgroups $A \leq \mathrm{Aut}(G(m))$, which then implies that the conjugacy problem is solvable in extensions of even dihedral Artin groups.\\

2020 Mathematics Subject Classification: 20F10, 20F36.

\end{abstract}
\unmarkedfntext{\emph{Keywords}: Twisted conjugacy problem, dihedral Artin groups, orbit decidability, Baumslag-Solitar groups.}

\section{Introduction}
In this second paper, we find a complete solution to the following decision problem.

\begin{restatable*}{thm}{TCP}\label{thm:main result TCP solvable}
    The twisted conjugacy problem is decidable for dihedral Artin groups. 
\end{restatable*}
For a finitely generated group $G$ with generating set $X$, we say two elements $u,v \in G$ are \emph{twisted conjugate} by some automorphism $\phi \in \mathrm{Aut}(G)$ if there exists an element $w \in G$ such that $v = \phi^{-1}(w)uw$. The \emph{twisted conjugacy problem} (TCP) asks whether there exists an algorithm to determine if two elements, given as words over $X$, are twisted conjugate by some automorphism $\phi \in \mathrm{Aut}(G)$. In \cite{crowe_twisted_2024}, an implementable algorithm was constructed to solve the twisted conjugacy problem for odd dihedral Artin groups. It remains therefore to solve the twisted conjugacy problem for even dihedral Artin groups, that is, groups with presentation $G(m) = \langle a,b \mid {}_{m}(a,b) =  {}_{m}(b,a) \rangle$, where $m \geq 2$ is even, and $_{m}(a,b)$ is the word $abab\dots$ of length $m$. This is the goal of the present paper. 

We take a similar approach as odd dihedral Artin groups, by considering alternative group presentations for even dihedral Artin groups. First, we observe that any even dihedral Artin group $G(m)$ is isomorphic to a \emph{Baumslag Solitar group} of the form $\mathrm{BS}(n,n)$. With this presentation, \cite{gilbert_tree_2000} describes all outer automorphisms of this group which, unlike for odd dihedral Artin groups, include outer automorphisms which are non-length preserving. Our algorithm to solve the twisted conjugacy problem works for all outer automorphisms, and so we extend Juh\'{a}sz's result \cite{juhasz_twisted_2011}, where only length preserving automorphisms were considered. Secondly, we use the fact that $\mathrm{BS}(n,n)$ is isomorphic to the semidirect product $F_{n} \rtimes \Z$. This semidirect form will allow us to construct an algorithm to solve the twisted conjugacy problem for all even dihedral Artin groups. We believe this is the first example of a Baumslag Solitar group with solvable twisted conjugacy problem. 

As well as establishing \cref{thm:main result TCP solvable}, we are able to determine the complexity of our algorithm for even dihedral Artin groups. We found the complexity of the twisted conjugacy problem for odd dihedral Artin groups to be linear in \cite{crowe_twisted_2024}.
\begin{restatable*}{thm}{Complex}\label{thm:complexity}
    Let $G(m) = \langle X \rangle$ be a dihedral Artin group. Based on the length of input words $u, v \in X^{\ast}$, the twisted conjugacy problem for $G(m)$ has:
    \begin{itemize}
        \item[(i)] linear complexity when $m$ is odd, and
        \item[(ii)] quadratic complexity when $m$ is even.
    \end{itemize}
\end{restatable*}
Similar to our first paper \cite{crowe_twisted_2024}, we extend \cref{thm:main result TCP solvable} and consider the conjugacy problem in extensions of even dihedral Artin groups. The criteria from \cite[Theorem 3.1]{bogopolski_orbit_2009} allows us to determine the conjugacy problem in group extensions, based on a property known as \emph{orbit decidability.} For a group $G$ and subgroup $A \leq \mathrm{Aut}(G)$, the \emph{orbit decidability problem} asks whether we can determine if for two elements $u,v \in G$, there exists an automorphism $\phi \in A$ such that $v$ is conjugate to $\phi(u)$. To apply \cref{thm:main result TCP solvable}, we require what is known as the `action subgroup' of dihedral Artin groups to be orbit decidable. For odd dihedral Artin groups, we proved a stronger statement, by showing that all subgroups of the automorphism group of odd dihedral Artin groups are orbit decidable \cite{crowe_twisted_2024}. We can also prove the same result for even dihedral Artin groups, and so we prove the following theorem.

\begin{restatable*}{thm}{orbit}\label{thm:orbit decid}
    Every finitely generated subgroup $A \leq \mathrm{Aut}(G(m))$, is orbit decidable.
\end{restatable*}
Combined with \cref{thm:main result TCP solvable} and \cite{crowe_twisted_2024}, we find new examples of groups with solvable conjugacy problem. 

\begin{restatable*}{thm}{extension}
    Let $G = G(m) \rtimes H$ be an extension of a dihedral Artin group by a finitely generated group $H$ which satisfies conditions $(ii)$ and $(iii)$ from \cite[Theorem 3.1]{bogopolski_orbit_2009} (e.g. let $H$ be torsion-free hyperbolic). Then $G$ has decidable conjugacy problem. 
\end{restatable*}
The structure of this paper is as follows. In \cref{sec:prelims}, we provide details on group presentations and the outer automorphism group for even dihedral Artin groups. We then use algebraic properties of the semidirect presentation $F_{n} \rtimes \Z$ in \cref{sec:algorithm} to construct an algorithm to solve the twisted conjugacy problem in even dihedral Artin groups. The main idea behind our algorithm is that any two twisted conjugate elements, which are minimal length within their twisted conjugacy class, must be related by a sequence of `shifts'. Finally in \cref{sec:extension}, we show orbit decidability for all subgroups of $\mathrm{Aut}(G(m))$, and apply our results to solve the conjugacy problem in extensions of dihedral Artin groups. 

\section{Preliminaries}\label{sec:prelims}
All relevant background knowledge on decision problems and dihedral Artin groups can be found in \cite{crowe_twisted_2024}. We recall necessary notation and definitions here.

Let $X$ be a finite set, and let $X^{\ast}$ be the set of all finite words over $X$. For a group $G$ generated by $X$, we use $u=v$ to denote equality of words in $X^{\ast}$, and $u =_{G} v$ to denote equality of the group elements represented by $u$ and $v$. We let $l(w)$ denote the word length of $w$ over $X$. For a group element $g \in G$, we define the \emph{length} of $g$, denoted $|g|_{X}$, to be the length of a shortest representative word for the element $g$ over $X$. A word $w \in X^{\ast}$ is \emph{geodesic} if $l(w) = |\pi(w)|_{X}$, where $\pi: X^{\ast} \rightarrow G$ is the natural projection. If there exists a unique word $w$ of minimal length representing $g$, then we say $w$ is a \emph{unique geodesic}. Otherwise, $w$ is a non-unique geodesic. We write $u \sim v$ when $u,v \in X^{\ast}$ represent conjugate elements in $G$.

\begin{defn}
    Let $G = \langle X \rangle$, let $u,v \in X^{\ast}$, and let $\phi \in \mathrm{Aut}(G)$ be an automorphism of $G$.
    \begin{enumerate}
        \item We say $u$ and $v$ are \emph{$\phi$-twisted conjugate}, denoted $u \sim_{\phi} v$, if there exists an element $w \in G$ such that $v =_{G} \phi(w)^{-1}uw$.
        \item The \emph{$\phi$-twisted conjugacy problem} for $G$, denoted $\mathrm{TCP}_{\phi}(G)$, takes as input two words $u,v \in X^{\ast}$, and decides whether they represent groups elements which are $\phi$-twisted conjugate to each other in $G$.
        \item The (uniform) \emph{twisted conjugacy problem} for $G$, denoted $\mathrm{TCP}(G)$, takes as input two words $u,v \in X^{\ast}$, and decides whether $u$ and $v$ represent groups elements which are $\phi$-twisted conjugate in $G$, for some $\phi \in \mathrm{Aut}(G)$.
    \end{enumerate}
\end{defn}
\begin{defn}\label{defn: gen set free product}
    Let $m \in \Z_{\geq 2}$. A dihedral Artin group is the group defined by the following presentation:
    \begin{equation}\label{eq:all groups}
    G(m) = \langle x,y \mid {}_{m}(x,y) =  {}_{m}(y,x) \rangle,
\end{equation}
     where $_{m}(x,y)$ is the word $xyxy \dots$ of length $m$. When $m$ is even, this group is isomorphic to
    \begin{equation}\label{eq: even free product present}
    G(m) = \langle a,b \mid a^{-1}b^{n}a = b^{n} \rangle,
\end{equation}
where $n = \frac{m}{2} \geq 2$, by setting $a=x, b=xy$. Here $G(m)$ is isomorphic to the \emph{Baumslag-Solitar group} $\mathrm{BS}(n,n)$.
\end{defn}

\subsection{Outer automorphisms}
Similar to odd dihedral Artin groups, we establish the behaviour of the outer automorphism group of $G(m)$. These were classified in $\cite{gilbert_tree_2000}$.

\begin{thm}\cite[Theorem D]{gilbert_tree_2000}
    Let $G(m)$ be a group with presentation given by \cref{eq: even free product present}. Then
    \[     \mathrm{Out}(G(m)) \cong D_{\infty} \times C_{2}.
    \]
\end{thm}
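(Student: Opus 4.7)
The plan is to exploit the HNN and semidirect product structure of $G(m) \cong \mathrm{BS}(n,n) \cong F_n \rtimes \Z$ recorded in \cref{defn: gen set free product}. The essential ingredient is that the center $Z(G(m)) = \langle b^n \rangle$ is characteristic, so every automorphism preserves it and descends to an automorphism of $G(m)/Z(G(m)) \cong \langle a, b \mid b^n \rangle \cong \Z * C_n$. This gives a homomorphism $\mathrm{Out}(G(m)) \to \mathrm{Out}(\Z * C_n)$ whose analysis drives the computation.

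First I would exhibit explicit candidates for generators of $D_\infty \times C_2$ directly in $\mathrm{Aut}(G(m))$: the shift $\sigma: a \mapsto ba,\ b \mapsto b$, together with the two inversions $\alpha: a \mapsto a^{-1},\ b \mapsto b$ and $\beta: a \mapsto a,\ b \mapsto b^{-1}$. Each is immediately checked to preserve the defining relation $a^{-1}b^n a = b^n$. The shift $\sigma$ has infinite order modulo inner automorphisms: $\sigma^k$ acts as $a \mapsto b^k a,\ b \mapsto b$, and a conjugating element realizing it would have to centralize $b$, but a Britton's-lemma computation in the HNN extension gives $C_{G(m)}(b) = \langle b \rangle$, so no $b^{-l} a b^l$ equals $b^k a$ for $k \neq 0$. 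I would then verify the required commutation relations modulo inner automorphisms to see that $\langle \alpha, \sigma \rangle$ realizes $D_\infty$ inside $\mathrm{Out}(G(m))$ and that $\beta$ contributes an independent commuting $C_2$.

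The harder step is the reverse containment: ruling out further outer automorphisms. For this I would apply Bass--Serre theory to the HNN splitting $G(m) = \langle b \rangle *_{\langle b^n \rangle}$, with stable letter $a$ and coincident associated edge subgroups. Since $Z$ is characteristic, any automorphism induces an equivariant action on the Bass--Serre tree of the splitting, and its contribution to $\mathrm{Out}(G(m))$ decomposes into three pieces: vertex-group automorphisms of $\langle b \rangle \cong \Z$ preserving the edge subgroup (contributing $b \mapsto b^{\pm 1}$), the orientation-swap of the single loop edge, and translations by powers of the stable letter (contributing the infinite cyclic piece). Careful accounting of which combinations are inner then yields exactly the factors $D_\infty$ and $C_2$.

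The main obstacle I expect is precisely this last bookkeeping step: because the two associated subgroups of the HNN extension coincide, the loop-edge orientation swap is a genuine outer symmetry rather than one absorbed into inner conjugation, and disentangling it from the shift and from the $b \mapsto b^{\pm 1}$ inversions is where the combinatorics is most delicate. This is exactly the content cleanly handled by Gilbert's general framework for tree products of groups in \cite{gilbert_tree_2000}, which I would invoke to conclude.
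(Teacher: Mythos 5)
The paper does not actually prove this statement---it is quoted verbatim from \cite[Theorem D]{gilbert_tree_2000}---and your proposal, after a correct sketch of the easy containment (your $\sigma,\alpha,\beta$ are the paper's $\beta_{3},\beta_{2},\beta_{1}$ up to relabelling, and the computation $C_{G(m)}(b)=\langle b\rangle$ showing the shift is non-inner is sound), ultimately defers the reverse containment to Gilbert's tree-action framework, so in substance both rest on the same citation. The one caution worth recording: the claim that every automorphism induces an equivariant action on the Bass--Serre tree of the HNN splitting does \emph{not} follow merely from $Z(G(m))=\langle b^{n}\rangle$ being characteristic---the invariance of the elliptic structure of that splitting is exactly the nontrivial content of \cite{gilbert_tree_2000}---so your argument is a correct reduction to the cited result rather than an independent proof, which puts it on the same footing as the paper.
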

We define the following three automorphisms of $G(m)$:
\begin{equation*}
    \begin{split}
    \beta_{1}(a) &= a, \\
    \beta_{2}(a) &= a^{-1}, \\
    \beta_{3}(a) &= ab,
\end{split}
\quad 
\begin{split}
    \beta_{1}(b) &= b^{-1},  \\
    \beta_{2}(b) &= b,  \\
    \beta_{3}(b) &= b.
\end{split}
\end{equation*}
\begin{prop}
    The images of $\beta_{1}, \beta_{2}, \beta_{3}$ generate $\mathrm{Out}(G(m)) = D_{\infty} \times C_{2}$.
\end{prop}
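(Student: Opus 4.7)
The plan is to check that each $\beta_i$ is a well-defined automorphism, to identify the images $[\beta_i] \in \mathrm{Out}(G(m))$ together with the relations among them, and then to upgrade the resulting embedding of $D_{\infty} \times C_{2}$ into $\mathrm{Out}(G(m))$ to an equality. The first part is routine: each $\beta_i$ preserves the defining relation $a^{-1}b^{n}a = b^{n}$ (for $\beta_{1}$ invert the relation; for $\beta_{2}$ rewrite it as $ab^{n}=b^{n}a$; for $\beta_{3}$ the inserted $b$'s cancel), while $\beta_{1}, \beta_{2}$ are visibly involutions and $\beta_{3}^{-1}$ is given by $a \mapsto ab^{-1}$, $b \mapsto b$.

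To pin down the $[\beta_i]$ in $\mathrm{Out}(G(m))$ I would use the induced action on the abelianization $G(m)^{\mathrm{ab}} \cong \Z^{2}$, which factors through $\mathrm{Out}(G(m))$ since inner automorphisms act trivially. The induced matrices are $\mathrm{diag}(1,-1)$, $\mathrm{diag}(-1,1)$, and a unipotent shear respectively, so $[\beta_{1}], [\beta_{2}]$ are distinct involutions and $[\beta_{3}]$ has infinite order. Two further short calculations on the generators yield the key coincidences: $(\beta_{2}\beta_{3})^{2}$ sends $a \mapsto b^{-1}ab$, $b \mapsto b$ and is therefore inner, and $\beta_{1}\beta_{2}\beta_{3}$ differs from $\beta_{3}\beta_{1}\beta_{2}$ by conjugation by $b$. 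Consequently $[\beta_{2}]$ and $[\beta_{2}\beta_{3}]$ are non-commuting involutions generating a copy of $D_{\infty}$ (with product $[\beta_{3}]$), while $[\beta_{1}\beta_{2}]$ commutes with both and generates a $C_{2}$. Together these produce an injection of $D_{\infty} \times C_{2}$ into $\langle [\beta_{1}], [\beta_{2}], [\beta_{3}] \rangle \leq \mathrm{Out}(G(m))$.

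The hard part is to upgrade this embedding to the claimed equality, since $D_{\infty}$ contains isomorphic copies of itself of every finite index and so an abstract isomorphism does not suffice. To address this I would invoke the explicit description of $\mathrm{Out}(G(m))$ from $\cite[\text{Theorem D}]{gilbert_tree_2000}$ and write each of its stated generators as a word in $\beta_{1}, \beta_{2}, \beta_{3}$ modulo an inner automorphism. Using the abelianization action to narrow down candidates and the semidirect decomposition $G(m) = F_{n} \rtimes \Z$ to verify each remaining equality on the generators $a$ and $b$, this reduces to a finite check.
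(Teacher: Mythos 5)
Your proposal is correct and, on the identification side, is actually more careful than the paper's own argument. The paper simply verifies the relations $\beta_1^2=\beta_2^2=(\beta_1\beta_2)^2=(\beta_1\beta_3)^2=(\beta_2\beta_3)^2=1$ in $\mathrm{Out}(G(m))$ and then asserts that the presentation of $D_\infty\times C_2$ is obtained; strictly, checking relations only exhibits $\langle[\beta_1],[\beta_2],[\beta_3]\rangle$ as a \emph{quotient} of the presented group, and the paper never certifies that $[\beta_3]$ has infinite order in $\mathrm{Out}(G(m))$ nor that the various involutions are nontrivial. Your use of the induced action on $G(m)^{\mathrm{ab}}\cong\Z^2$ (through which $\mathrm{Out}(G(m))$ acts, since inner automorphisms die) supplies exactly the missing lower bounds: the shear shows $[\beta_3]$ has infinite order, so $[\beta_2]$ and $[\beta_2\beta_3]$ are involutions with infinite-order product and hence generate a genuine $D_\infty$, and $[\beta_1\beta_2]$ is a nontrivial central involution outside it (the centre of $D_\infty$ being trivial). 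Your computations that $(\beta_2\beta_3)^2$ is conjugation by $b$ and that $\beta_1\beta_2\beta_3$ and $\beta_3\beta_1\beta_2$ differ by conjugation by $b$ are both correct. You are also right to flag that an embedded copy of $D_\infty\times C_2$ inside $\mathrm{Out}(G(m))\cong D_\infty\times C_2$ need not be everything, since $D_\infty$ is not co-Hopfian; this surjectivity step is the one point you leave as a plan (expressing Gilbert's explicit generators in terms of the $\beta_i$ modulo inner automorphisms) rather than carrying it out. That deferral is the only incomplete piece, but note that the paper's proof omits this step entirely without comment, so your version both isolates the real remaining obligation and gives a workable route to discharging it.
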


\begin{proof}
    We will show that $\beta_{1}$ generates $C_{2}$, and that $\{\beta_{2}, \beta_{3}\}$ generates $D_{\infty}$. We check the following relations in $\mathrm{Out}(G(m))$:
    \[ \beta^{2}_{1} = \beta^{2}_{2} = (\beta_{1}\beta_{2})^{2} = (\beta_{1}\beta_{3})^{2} = (\beta_{2}\beta_{3})^{2} = 1.
    \]
    It is clear that $\beta_{1}$ and $\beta_{2}$ have order two in both $\mathrm{Out}(G(m))$ and $\mathrm{Aut}(G(m))$. Consider the following compositions:
    \begin{equation*}
        \begin{split}
            \beta_{1}\beta_{2} &\colon a \mapsto a^{-1}, \\
            \beta_{1}\beta_{3} &\colon a \mapsto ab^{-1}, \\
            \beta_{2}\beta_{3} &\colon a \mapsto a^{-1}b,
        \end{split}
        \quad 
        \begin{split}
            b &\mapsto b^{-1}, \\
            b &\mapsto b^{-1}, \\
            b &\mapsto b. 
        \end{split}
    \end{equation*}
    Then $(\beta_{1}\beta_{2})^{2}$ and $(\beta_{1}\beta_{3})^{2}$ have order two in both $\mathrm{Out}(G(m))$ and $\mathrm{Aut}(G(m))$. Also consider
    \[ (\beta_{2}\beta_{3})^{2}\colon a \mapsto b^{-1}ab, \; b \mapsto b,
    \]
    and so $(\beta_{2}\beta_{3})^{2}$ has order 2 in $\mathrm{Out}(G(m))$. With these relations, we obtain the presentation required:
    \[ \mathrm{Out}(G(m)) \cong D_{\infty} \times C_{2} = \langle \beta_{1}, \beta_{2}, \beta_{3} \mid \beta_{1}^{2} = \beta_{2}^{2} = (\beta_{1}\beta_{2})^{2} = (\beta_{1}\beta_{3})^{2} = (\beta_{2}\beta_{3})^{2} = 1 \rangle. 
    \]
\end{proof}
\begin{cor}\label{cor: odd outer old}
    Any element $g \in \mathrm{Out}(G(m))$ can be written in the form $g = \beta_{1}^{q} \beta_{2}^{r} \beta_{3}^{s}$, where $q,r \in \{-1,0,1\}, s \in \mathbb{Z}$. In particular, any $\phi \in \mathrm{Out}(G(m))$ is of the form
    \[ \phi \colon a \mapsto a^{\e_{a}}b^{d}, \; b \mapsto b^{\e_{b}},
    \]
    where $\e_{a}, \e_{b} \in \{\pm 1\},$ and $d \in \Z$.
\end{cor}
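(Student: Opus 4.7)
The plan is to extract a normal form for elements of $\mathrm{Out}(G(m))$ directly from the finite presentation established in the preceding proposition, then apply $\phi$ to the generators $a, b$ to read off the explicit formula.

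First I would exploit the given relations to see that $\beta_1$ is central and that $\beta_2$ conjugates $\beta_3$ to its inverse. Specifically, from $\beta_1^2 = \beta_2^2 = (\beta_1\beta_2)^2 = 1$ we get $\beta_1\beta_2 = (\beta_1\beta_2)^{-1} = \beta_2\beta_1$, and analogously $(\beta_1\beta_3)^2 = 1$ together with $\beta_1^2 = 1$ gives $\beta_1\beta_3 = \beta_3\beta_1$, so $\beta_1$ commutes with everything. Similarly $(\beta_2\beta_3)^2 = 1$ combined with $\beta_2^2 = 1$ yields $\beta_2\beta_3\beta_2 = \beta_3^{-1}$.

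Next, given any word $w$ in $\beta_1^{\pm 1}, \beta_2^{\pm 1}, \beta_3^{\pm 1}$, I would first collect all occurrences of $\beta_1$ on the left using centrality, reducing the $\beta_1$-exponent modulo $2$. The remaining word lies in $\langle \beta_2, \beta_3 \rangle$. Using $\beta_2\beta_3^{\pm 1} = \beta_3^{\mp 1}\beta_2$ repeatedly, I would push every $\beta_2$ to the left past the $\beta_3$'s (flipping signs on the $\beta_3$-exponents as needed), and then reduce the $\beta_2$-exponent modulo $2$. This places $w$ in the form $\beta_1^q\beta_2^r\beta_3^s$ with $q,r \in \{0,1\} \subset \{-1,0,1\}$ and $s \in \mathbb{Z}$, proving the first assertion.

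For the ``In particular'' statement, I would compute the action of $\phi = \beta_1^q\beta_2^r\beta_3^s$ on the generators by applying the maps in order. Since $\beta_3$ fixes $b$, induction gives $\beta_3^s(a) = ab^s$ and $\beta_3^s(b) = b$. Applying $\beta_2^r$ sends this to $a^{(-1)^r}b^s$ and $b$, and finally $\beta_1^q$ sends it to $a^{(-1)^r}b^{(-1)^q s}$ and $b^{(-1)^q}$. Setting $\varepsilon_a = (-1)^r$, $\varepsilon_b = (-1)^q$, and $d = (-1)^q s$ yields the stated form. No step here is a serious obstacle: the entire argument is a rewriting exercise in the presentation already pinned down in the previous proposition, and the only care required is keeping track of signs when commuting $\beta_2$ past powers of $\beta_3$.
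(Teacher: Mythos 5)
Your overall strategy (extract a normal form from the presentation, then evaluate $\beta_1^q\beta_2^r\beta_3^s$ on $a$ and $b$) is the right one, and the paper itself treats the corollary as immediate from the presentation, so no deeper idea is needed. However, there is a genuine error in your derivation of the normal form: you claim that $(\beta_1\beta_3)^2 = 1$ together with $\beta_1^2 = 1$ gives $\beta_1\beta_3 = \beta_3\beta_1$, ``so $\beta_1$ commutes with everything.'' This is not analogous to the $(\beta_1\beta_2)^2 = 1$ case, because there you also used $\beta_2^2 = 1$; here $\beta_3$ has infinite order in $\mathrm{Out}(G(m))$. What $(\beta_1\beta_3)^2 = 1$ and $\beta_1^2 = 1$ actually yield is $\beta_1\beta_3\beta_1 = \beta_3^{-1}$, i.e.\ $\beta_1$ \emph{inverts} $\beta_3$, exactly as $\beta_2$ does. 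You can see this concretely from the paper's tables: $\beta_1\beta_3$ sends $a \mapsto ab^{-1}$ while $\beta_3\beta_1$ sends $a \mapsto ab$, and these differ by $\beta_3^{2}$, which is not inner.

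The error is repairable without changing the shape of the argument: when pushing each $\beta_1$ (as well as each $\beta_2$) to the left you flip the sign of every $\beta_3$-exponent it passes, and you still arrive at $\beta_1^q\beta_2^r\beta_3^s$ with $q,r \in \{0,1\}$ and $s \in \Z$; only the value of $s$ produced by the rewriting differs from what your (false) centrality claim would give. Your evaluation of $\beta_1^q\beta_2^r\beta_3^s$ on the generators does not rely on the false commutation and is correct, so the ``in particular'' clause stands with $\e_a = (-1)^r$, $\e_b = (-1)^q$, $d = (-1)^q s$.
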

\comm{
In particular, any $g \in \mathrm{Out}(G(m))$ maps $ a \mapsto a^{\pm 1}b^{d}, \; b \mapsto b ^{\pm 1}$, where $d \in \mathbb{Z}$. We can now classify all finite order elements of $\mathrm{Out}(G(m))$.
\begin{cor}
    Any $\phi \in \mathrm{Out}(G(m))$ of finite order is of the form
    \[ \phi: a \mapsto a^{\e_{a}}b^{d}, \; b \mapsto b^{\e_{b}},
    \]
    where $\e_{a}, \e_{b} \in \{\pm 1\}, d \in \Z$, and if $d \neq 0$, then $\e_{a} \neq \e_{b}$. 
\end{cor}
\begin{proof}
    We consider length preserving and non-length preserving cases in turn. Indeed when $d = 0$, it is clear than $\phi$ must be of order 2. When $d \neq 0$, we note that the only maps with infinite order in $\mathrm{Out}(G(m))$ are those of the form
    \begin{align*}
         \phi_{1}&: a \mapsto ab^{d}, \; b \mapsto b,\\
         \phi_{2}&: a \mapsto a^{-1}b^{d}, \; b \mapsto b^{-1}.
    \end{align*}
    Therefore $\e_{a} \neq \e_{b}$ when $d \neq 0$.
\end{proof}}
\subsection{Semidirect product presentation}\label{sec:semidirect pres}
To solve the twisted conjugacy problem $\mathrm{TCP}(G(m))$, we use an alternative presentation for $G(m)$.
\begin{prop}\cite[Prop. 5.1]{felshtyn_twisted_2018}\label{prop:semidirect pres}
    The group $G(m)$ is isomorphic to the following semidirect product:
    \begin{equation}\label{eq: even semidirect pres}
        G(m) \cong F_{n} \rtimes \Z = \langle x_{0}, x_{1}, \dots,x_{n-1}, y \mid y^{-1}x_{i}y = x_{i+1} \; (1 \leq i \leq n-2), \; y^{-1}x_{n-1}y = x_{0} \rangle,
    \end{equation}
    where $n = \frac{m}{2} \geq 2$. The isomorphism is defined as 
\begin{align*}
        \psi \colon G(m) &\rightarrow F_{n} \rtimes \Z \\
        a &\mapsto x_{0}, \\
        b &\mapsto y. 
    \end{align*}
\end{prop}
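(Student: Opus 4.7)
The plan is to construct $\psi$ explicitly on generators and exhibit an inverse homomorphism. First I would define $\psi$ on the generators of $G(m)$ by $a \mapsto x_0$ and $b \mapsto y$; to show this extends to a homomorphism out of the group presented in \cref{eq: even free product present}, one must verify that the defining relation $a^{-1}b^n a = b^n$ is sent to a true identity in the group given by \cref{eq: even semidirect pres}. Iterating $y^{-1} x_i y = x_{i+1}$ (with indices read modulo $n$, which is exactly what the wrap-around relation $y^{-1} x_{n-1} y = x_0$ encodes) gives $y^{-k} x_0 y^k = x_k$ for all $k$, and in particular $y^{-n} x_0 y^n = x_0$. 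Hence $y^n$ commutes with $x_0$, so $x_0^{-1} y^n x_0 = y^n$, which is the required image of the defining relation.

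Next I would construct a candidate inverse $\phi \colon F_n \rtimes \Z \to G(m)$ by setting $\phi(x_i) = b^{-i} a b^{i}$ for $0 \leq i \leq n-1$ and $\phi(y) = b$, and verify each defining relation of \cref{eq: even semidirect pres}. For $0 \leq i \leq n-2$, $\phi(y^{-1} x_i y) = b^{-(i+1)} a b^{i+1} = \phi(x_{i+1})$ holds freely. The wrap-around relation demands $\phi(y^{-1} x_{n-1} y) = b^{-n} a b^n =_{G(m)} a = \phi(x_0)$, which holds precisely because the $\mathrm{BS}(n,n)$ relation forces $b^n$ to commute with $a$.

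Finally, since each map is a group homomorphism, it suffices to check they are mutually inverse on generators. Directly, $\phi(\psi(a)) = \phi(x_0) = a$ and $\phi(\psi(b)) = \phi(y) = b$; conversely, $\psi(\phi(y)) = y$, and $\psi(\phi(x_i)) = y^{-i} x_0 y^i$, which equals $x_i$ by a short induction using the iterated conjugation identity established above. There is no substantive obstacle: the whole argument reduces to the observation that the relation $[a,b^n]=1$ in $G(m)$ corresponds exactly to the order-$n$ cyclic nature of the conjugation action of $y$ on $\{x_0,\dots,x_{n-1}\}$, and the only care required is in bookkeeping the wrap-around at index $n{-}1$.
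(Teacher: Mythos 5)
Your argument is correct and is the natural way to establish the isomorphism: check that $\psi$ extends to a homomorphism by verifying the image of the single relator of \cref{eq: even free product present}, build the candidate inverse $\phi$ and verify its defining relations, then confirm $\phi\psi$ and $\psi\phi$ are the identity on generators. The paper does not itself prove this proposition — it is cited from \cite{felshtyn_twisted_2018} — so there is no in-paper proof to compare against.

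One point you glossed over is worth surfacing. As printed, \cref{eq: even semidirect pres} lists $y^{-1}x_i y = x_{i+1}$ only for $1 \leq i \leq n-2$, so the relation $y^{-1}x_0 y = x_1$ is absent. Read literally, this presentation defines a free group of rank two: the $n-1$ conjugation relations allow a Tietze elimination of $x_2,\dots,x_{n-1}$ and then $x_0$ in terms of $x_1$ and $y$, leaving no relation at all, and $F_2$ is certainly not isomorphic to $\mathrm{BS}(n,n)$. The intended range is $0 \leq i \leq n-2$, giving the full cyclic $\Z$-action on $F_n$, and both your verification that $\psi$ is well defined (which needs $y^{-k}x_0 y^{k} = x_k$ beginning at $k=1$) and your computation that $\psi(\phi(x_i)) = x_i$ rest precisely on this extra relation. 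Your parenthetical ``with indices read modulo $n$'' silently repairs the index range; it would be cleaner to flag it explicitly as a correction to the stated presentation rather than fold it into the wrap-around relation, which on its own does not supply the $i=0$ case.
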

We note that if $u,v \in F_{n}$ are twisted conjugate by an element $w \in F_{n}$, then we can solve the twisted conjugacy problem with respect to $u$ and $v$, since the twisted conjugacy problem is solvable in free groups \cite[Theorem 1.5]{bogopolski_conjugacy_2006}.

For notation, we let $X_{n} = \{x_{0}, \dots, x_{n-1}\}$, and $X = X_{n} \cup \{y\}$. \cref{prop:semidirect pres} implies that any geodesic $g \in X^{\ast}$ can be written uniquely in the form $g =_{G} g_{1}g_{2}$, where $g_{1} \in F_{n}$ and $g_{2} = y^{t}$ for some $t \in \Z$. We call this our \emph{geodesic normal form}. We will refer to $g_{1} \in F_{n}$ as the \emph{free component} of $g$, and the set $X_{n}$ as the \emph{free generators}. 
\begin{rmk}
    Unless otherwise stated, any (geodesic) elements $u \in F_{n}$ will be written in the form $u = x_{i_{1}}^{p_{1}}\dots x^{p_{q}}_{i_{q}},$ where $p_{j} \in \{\pm 1\}$ and $i_{j} \in \{0, \dots, n-1\} \; (1 \leq j \leq q)$. 
\end{rmk}
Let $[ \cdot ] \colon \Z \rightarrow \{0, \dots, n-1 \}$ denote the modular function, which maps any integer $z$ to $z \Mod{n}$. Using the relations from \cref{prop:semidirect pres}, we can show the following. 
\begin{lemma}\label{lem:relation x y}
    Let $G(m)$ be defined by the presentation given in \cref{prop:semidirect pres}. Then for all $s \in \Z_{\neq 0}, \; i \in \{0, \dots, n-1\}, \; \e = \pm 1$, we have 
    \[ y^{s}x^{\e}_{i} =_{G} x^{\e}_{[i-s]}y^{s}.
    \]
\end{lemma}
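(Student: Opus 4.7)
The plan is to reduce everything to the single defining relation $y^{-1}x_i y = x_{[i+1]}$ and then bootstrap by two short inductions (one on $s$, one handling the sign $\varepsilon$). The key observation is that the defining relations, which include the wrap-around $y^{-1}x_{n-1}y = x_0$, can be written uniformly as $y^{-1}x_i y = x_{[i+1]}$ for every $i \in \{0, \dots, n-1\}$, since the bracket function $[\,\cdot\,]$ absorbs the mod-$n$ reduction. Rearranging gives the two ``commutation'' rules I will actually use, namely
\begin{equation*}
yx_j = x_{[j-1]}y \quad \text{and} \quad y^{-1}x_j = x_{[j+1]}y^{-1},
\end{equation*}
the first being the relation solved for $x_j y = yx_{[j+1]}$ with the index renamed, and the second obtained from the first by conjugating both sides by $y^{-1}$.

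First I would establish the case $\varepsilon = 1$ by induction on $|s|$. For $s = 1$ (resp. $s = -1$), the two displayed equalities above give exactly $y^{1}x_i = x_{[i-1]}y^{1}$ (resp. $y^{-1}x_i = x_{[i+1]}y^{-1} = x_{[i-(-1)]}y^{-1}$), matching the claimed formula. For the inductive step with $s > 0$, assuming $y^{s}x_i = x_{[i-s]}y^{s}$, I multiply on the left by $y$ and apply the $s=1$ case to obtain
\begin{equation*}
y^{s+1}x_i \;=\; y\bigl(x_{[i-s]}y^{s}\bigr) \;=\; x_{[i-s-1]}y^{s+1} \;=\; x_{[i-(s+1)]}y^{s+1},
\end{equation*}
and symmetrically for $s<0$ using the second commutation rule. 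Finally, to pass from $\varepsilon = 1$ to $\varepsilon = -1$, I take inverses of the identity $y^{s}x_i = x_{[i-s]}y^{s}$ to get $x_i^{-1}y^{-s} = y^{-s}x_{[i-s]}^{-1}$, and then reindex (set $s' = -s$, $j = [i-s]$, so $i = [j - s']$) to conclude $y^{s'}x_j^{-1} = x_{[j-s']}^{-1}y^{s'}$, which is the desired formula for $\varepsilon = -1$.

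There is no genuine obstacle here; the entire statement is a bookkeeping exercise with the cyclic index $[\,\cdot\,]$. The only mild care needed is to check that the wrap-around relation $y^{-1}x_{n-1}y = x_0$ is correctly encoded by writing $[i+1]$ rather than $i+1$, so that a single formula covers all $n$ defining relations and the induction does not have to split into cases at the ``edges'' $i = 0$ and $i = n-1$.
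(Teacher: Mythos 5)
Your proof is correct and follows essentially the same route as the paper: an induction on $s$ whose base case is the defining relation $y^{-1}x_iy = x_{[i+1]}$, with the negative-$s$ case handled symmetrically. The only cosmetic difference is that you dispose of $\e = -1$ by inverting the $\e = 1$ identity and reindexing, whereas the paper carries the exponent $\e$ through the induction directly; both are fine.
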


\begin{proof}
    We prove by induction on $s \in \Z_{> 0}$, and the proof is symmetric for $s \in \Z_{<0}$. The base case is immediate from the relations in $G(m)$. By the inductive hypothesis, we have
    \[ y^{s+1}x^{\e}_{i} =_{G} yx^{\e}_{[i-s]}y^{s} =_{G} x^{\e}_{[i-(s+1)]}y^{s+1},     \]
    as required.
\end{proof}
This result illustrates that where we move $y$ powers to the right hand side of a word over $X$, then we only need to change the index of the free generators. We now define a function which captures this change of index of free generators. 
\begin{defn}\label{defn:map move y right}
    For all $s \in \Z_{\neq 0}$, define
\begin{align*}
        \Phi_{s} \colon F_{n} &\rightarrow F_{n} \\
        x^{\e}_{i} &\mapsto x^{\e}_{[i-s]},
    \end{align*}
where $\e \in \{ \pm 1 \}, i \in \{0, \dots, n-1\}$. This function is a bijection and preserves word length. Since $\Phi_{s}\circ \Phi_{t}\left(x^{\e}_{i}\right) = \Phi_{s+t}\left(x^{\e}_{i}\right)$ for all $s, t \in \Z_{\neq 0}$, it is an automorphism of $F_{n}$. In particular, $\Phi^{-1}_{s} = \Phi_{-s}$.
\end{defn}
Finally, we rewrite our outer automorphisms with respect to our new presentation as in \cref{eq: even semidirect pres}. By \cref{cor: odd outer old} and the isomorphism defined in \cref{prop:semidirect pres}, any outer automorphism of $G(m)$ is of the form
\begin{equation}\label{even:all auto forms}
    \phi \colon x_{0} \mapsto x_{0}^{\e_{x}}y^{d}, \; y \mapsto y^{\e_{y}},
\end{equation}
where $\e_{x}, \e_{y} \in \{\pm 1\},$ and $d \in \Z$. Note $\e^{2}_{x} = \e^{2}_{y} = 1$. For the remainder of this paper, $\phi \in \mathrm{Out}(G(m))$ will be of the form as in \cref{even:all auto forms}.

\subsection{Algebraic results}
Our first goal is to understand the image of geodesics by $\phi \in \mathrm{Out}(G(m))$, as in \cref{even:all auto forms}. The following result can be shown using the relations from the presentation as in \cref{eq: even semidirect pres}, and \cref{lem:relation x y}.
\begin{lemma}\label{lem:defn phi image gens}
    Let $0 \leq i \leq n-1$, and let $r_{i} \in \{\pm 1\}$. For all free generators $x_{i} \in X_{n}$, we have
    \[ \phi\left(x^{r_{i}}_{i}\right) =_{G} 
    \begin{cases}
        x^{\e_{x}r_{i}}_{[\e_{y}i]}y^{r_{i}d}, & r_{i} = 1, \\
        x^{\e_{x}r_{i}}_{[\e_{y}i+d]}y^{r_{i}d}, & r_{i} = -1. 
    \end{cases}
    \]
\end{lemma}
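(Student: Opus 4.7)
The plan is to reduce everything to the known action of $\phi$ on the generators $x_0$ and $y$, using the fact that in the semidirect presentation \cref{eq: even semidirect pres} every free generator $x_i$ can be expressed as a conjugate of $x_0$ by a power of $y$. Specifically, iterating the relation $y^{-1}x_i y = x_{[i+1]}$ yields $x_i =_G y^{-i}x_0 y^i$ for every $0 \leq i \leq n-1$. Since $\phi$ is a homomorphism, this reduces the computation of $\phi(x_i)$ to a product of known images.

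First I would handle the case $r_i = 1$. Substituting $x_i =_G y^{-i}x_0 y^i$ and applying $\phi$ gives
\[
\phi(x_i) =_G y^{-\e_y i}\, x_0^{\e_x} y^{d}\, y^{\e_y i} =_G y^{-\e_y i}\, x_0^{\e_x}\, y^{d+\e_y i}.
\]
Then I would invoke \cref{lem:relation x y} with $s = -\e_y i$ to move $y^{-\e_y i}$ past $x_0^{\e_x}$, producing $x_{[\e_y i]}^{\e_x}\, y^{-\e_y i}\, y^{d+\e_y i} =_G x_{[\e_y i]}^{\e_x}\, y^d$, which matches the stated expression in the $r_i=1$ case.

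For $r_i = -1$, I would simply invert the above identity:
\[
\phi(x_i^{-1}) =_G \bigl(x_{[\e_y i]}^{\e_x} y^d\bigr)^{-1} =_G y^{-d}\, x_{[\e_y i]}^{-\e_x}.
\]
A second application of \cref{lem:relation x y}, this time with $s = -d$, rewrites $y^{-d}\,x_{[\e_y i]}^{-\e_x}$ as $x_{[\e_y i + d]}^{-\e_x}\, y^{-d}$, which is the claimed form. Both branches of the formula then follow uniformly.

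I do not expect any real obstacles: the argument is a direct calculation that combines the conjugation identity $x_i = y^{-i}x_0 y^i$ (immediate by induction on $i$) with two applications of \cref{lem:relation x y}. The only mild subtlety is bookkeeping the modular arithmetic on indices, but since $\Phi_s$ is already defined to encode this via $[\,\cdot\,]$, there is nothing further to verify.
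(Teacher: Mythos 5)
Your proof is correct and follows exactly the route the paper indicates for this lemma (which it states without a written proof, saying only that it follows from the presentation's relations and \cref{lem:relation x y}): express $x_i$ as $y^{-i}x_0y^i$, apply $\phi$ as a homomorphism, and normalize with \cref{lem:relation x y}. The computations in both cases check out, including the index bookkeeping $[\e_y i]$ and $[\e_y i + d]$.
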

For notation, we define the following function which describes the free component of the image of a free generator $x_{i} \in X_{n}$ by $\phi$.
\begin{defn}\label{defn:phi map free comp}
    Let $0 \leq i \leq n-1$, and let $r_{i} \in \{\pm 1\}$. For all free generators $x_{i} \in X_{n}$, define
    \begin{align*}
    \phi_{F} \colon F_{n} &\rightarrow F_{n} \\
    x^{r_{i}}_{i} &\mapsto  
    \begin{cases}
        x^{\e_{x}}_{[\e_{y}i]}, & r_{i} = 1, \\
        x^{-\e_{x}}_{[\e_{y}i+d]}, & r_{i} = -1. 
    \end{cases}
\end{align*}
    In particular, $\phi(x^{r_{i}}_{i}) =_{G} \phi_{F}(x^{r_{i}}_{i})y^{r_{i}d}$.
\end{defn}
Note $\phi_{F}$ is not a homomorphism, for example $\phi_{F}(x^{r_{i}}_{i})^{-1} \neq \phi_{F}(x^{-r_{i}}_{i})$. We now define equivalent functions for the inverse map of $\phi$.  

\begin{lemma}\label{defn:inverse map}
    Let $0 \leq i \leq n-1$, and let $r_{i} \in \{\pm 1\}$. For all free generators $x_{i} \in X_{n}$, we have
    \[ \phi^{-1}(x^{r_{i}}_{i}) =_{G} 
    \begin{cases}
        x^{\e_{x}r_{i}}_{[\e_{y}i]}y^{-\e_{x}\e_{y}r_{i}d}, & \e_{x}r_{i} = 1, \\
        x^{\e_{x}r_{i}}_{[\e_{y}(i-d)]}y^{-\e_{x}\e_{y}r_{i}d}, & \e_{x}r_{i} = -1.
    \end{cases}
    \]
    Moreover, we define a function which describes the free component of the image of a free generator $x_{i} \in X_{n}$ by $\phi^{-1}$:
    \begin{align*}
        \phi^{-1}_{F} \colon F_{n} &\rightarrow F_{n} \\
        x^{r_{i}}_{i} &\mapsto 
        \begin{cases}
            x_{[\e_{y}i]}, & \e_{x}r_{i} = 1, \\
            x^{-1}_{[\e_{y}(i-d)]}, & \e_{x}r_{i} = -1.
        \end{cases}
    \end{align*}
\end{lemma}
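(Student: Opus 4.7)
The plan is to reverse-engineer $\phi^{-1}$ on the generators $x_0$ and $y$, then propagate the formula to all $x_i$ using the conjugation relation $x_i =_G y^{-i}x_0y^i$, and finally dualise to the case $r_i = -1$ by inversion. All the index shifting is controlled by \cref{lem:relation x y}.

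First, since $\phi(y) = y^{\e_y}$ and $\e_y^2 = 1$, applying $\phi$ to $y^{\e_y}$ returns $y$, hence $\phi^{-1}(y) = y^{\e_y}$. To invert on $x_0$ I would solve $\phi(A) = x_0$ in each sign case. When $\e_x = 1$, trying $A = x_0 y^c$ gives $\phi(A) = x_0 y^{d+\e_y c}$, forcing $c = -\e_y d$, so $\phi^{-1}(x_0) = x_0 y^{-\e_y d}$. When $\e_x = -1$, trying $A = y^e x_0^{-1}$ gives $\phi(A) = y^{\e_y e - d} x_0$, forcing $e = \e_y d$, and then one application of \cref{lem:relation x y} rewrites $A = y^{\e_y d} x_0^{-1} =_G x_{[-\e_y d]}^{-1} y^{\e_y d}$. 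Both cases already match the claimed formula at $i=0$ after observing that $-\e_x\e_y r_i d = -\e_y d$ when $\e_x r_i = 1$ and $= \e_y d$ when $\e_x r_i = -1$.

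For general $i$, I would apply $\phi^{-1}$ to the identity $x_i =_G y^{-i} x_0 y^i$, obtaining
\[
\phi^{-1}(x_i) =_G y^{-\e_y i}\,\phi^{-1}(x_0)\, y^{\e_y i},
\]
and then move the leading $y^{-\e_y i}$ past the free letter of $\phi^{-1}(x_0)$ using \cref{lem:relation x y}. The $y$-exponents on the two sides of the free letter collapse to exactly $-\e_x\e_y d$, while the subscript of the free letter shifts from $[-\e_y d]$ (or $0$) to $[\e_y(i-d)]$ (or $[\e_y i]$), matching the two cases in the statement. For $r_i = -1$, I would simply invert the expression $\phi^{-1}(x_i)$ and apply \cref{lem:relation x y} once more to push $y$-powers back to the right; the two sign cases swap, which is precisely what the condition $\e_x r_i = \pm 1$ tracks. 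The definition of $\phi_F^{-1}$ is then just the free component of the resulting normal form.

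I do not expect any real obstacle: the whole argument is a bookkeeping exercise on signs and modular indices. The only place one has to be careful is in case $\e_x = -1$, where $\phi^{-1}(x_0)$ is not already in the geodesic normal form $F_n\cdot y^{\Z}$ and an extra invocation of \cref{lem:relation x y} is needed before reading off the answer; the same care is required again when inverting in the $r_i = -1$ step. Organising the verification as a $2\times 2$ table on $(\e_x, r_i)$ — equivalently, on the single sign $\e_x r_i$ that appears in the statement — makes all four subcases uniform.
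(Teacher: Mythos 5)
Your argument is correct, and every one of the four sign cases checks out, but it takes a more roundabout route than the paper. The paper's proof is a one-line verification: it applies $\phi$ to the candidate expression $x^{\e_{x}r_{i}}_{[\e_{y}i]}y^{-\e_{x}\e_{y}r_{i}d}$ (resp.\ the other case) and uses \cref{lem:defn phi image gens} to see that the result is $x^{r_{i}}_{i}y^{\e_{x}r_{i}d}y^{-\e_{x}r_{i}d} =_{G} x^{r_{i}}_{i}$, directly for general $i$ and $r_{i}$. You instead \emph{derive} the formula: solve $\phi(A)=x_{0}$ by an ansatz in each sign case, propagate to general $i$ via $x_{i}=_{G}y^{-i}x_{0}y^{i}$ together with $\phi^{-1}(y)=y^{\e_{y}}$, and obtain the $r_{i}=-1$ case by inverting and renormalising with \cref{lem:relation x y}. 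What your approach buys is that it explains where the formula comes from rather than pulling it out of the air; what the paper's approach buys is brevity and no need for the conjugation identity or the inversion step. One small presentational caveat: when you "force" $c=-\e_{y}d$ and $e=\e_{y}d$ you are implicitly using that $F_{n}\rtimes\Z$ has the unique normal form $F_{n}\cdot y^{\Z}$ (so that $y^{s}x_{0}=_{G}x_{0}$ only if $s=0$); this is justified by \cref{prop:semidirect pres}, and it would be worth saying so explicitly, but it is not a gap.
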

\begin{proof}
    If $\e_{x}r_{i} = 1$, then $\phi\left(x^{\e_{x}r_{i}}_{[\e_{y}i]}y^{-\e_{x}\e_{y}r_{i}d}\right) =_{G} x^{r_{i}}_{i}y^{\e_{x}r_{i}d}y^{-\e_{x}r_{i}d} =_{G} x^{r_{i}}_{i}$. A similar proof holds when $\e_{x}r_{i} = -1$. 
\end{proof}

\begin{exmp}\label{exmp:first}
    Let $n=3$, i.e. $F_{n} = \langle x_{0}, x_{1}, x_{2} \rangle$. Let $\phi \colon x_{0} \mapsto x_{0}y^{4}, \; y \mapsto y$, i.e. $\e_{x} = \e_{y} =1$ and $d = 4$. Consider the geodesic $u = x_{0}x_{2}y^{2} \in F_{n} \rtimes \Z$. We can compute $\phi(u)$ as follows:
    \[ \phi(u) =_{G} \phi_{F}(x_{0})y^{4}\phi_{F}(x_{2})y^{4}y^{2} = x_{0}y^{4}x_{2}y^{6} =_{G} x_{0}\Phi_{4}(x_{2})y^{10} = x_{0}x_{1}y^{10}.
    \]
\end{exmp}
We now prove some algebraic results related to these functions, which will be used later. 
\begin{prop}\label{prop:non len p 1}
    Let $s \in \Z_{\neq 0}$, $0 \leq i \leq n-1$ and $r_{i} \in \{\pm 1\}$. For all free generators $x_{i} \in X_{n}$,
    \begin{enumerate}
        \item[(i)] $\Phi_{s}\left(\phi^{\pm 1}_{F}\left(x^{r_{i}}_{i}\right)\right) = \phi^{\pm 1}_{F}\left(\Phi_{\e_{y}s}\left(x^{r_{i}}_{i}\right)\right)$.
        \item[(ii)] $\phi^{-1}\left(\phi_{F}\left(x^{r_{i}}_{i}\right)\right) =_{G} x^{r_{i}}_{i}y^{-\e_{y}r_{i}d}$.
        \item[(iii)] $\phi\left(\phi^{-1}_{F}\left(x^{r_{i}}_{i}\right)\right) =_{G} x^{r_{i}}_{i}y^{\e_{x}r_{i}d}.$
    \end{enumerate}
\end{prop}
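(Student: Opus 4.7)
The proof is a direct case analysis using the explicit formulas from \cref{defn:map move y right}, \cref{defn:phi map free comp} and \cref{defn:inverse map}, together with the identities $\e_{x}^{2} = \e_{y}^{2} = 1$ and the fact that $[ \cdot ]$ is $n$-periodic. No deep ideas are required; the difficulty is purely bookkeeping, so the plan is to split on the sign $r_{i} \in \{\pm 1\}$ in each part, and additionally on $\e_{x}r_{i} \in \{\pm 1\}$ in parts (ii) and (iii).

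For (i), I would compute both sides directly. When $r_{i} = 1$, the left side is $\Phi_{s}(x^{\e_{x}}_{[\e_{y}i]}) = x^{\e_{x}}_{[\e_{y}i - s]}$, while the right side is $\phi_{F}(x_{[i - \e_{y}s]}) = x^{\e_{x}}_{[\e_{y}(i - \e_{y}s)]} = x^{\e_{x}}_{[\e_{y}i - s]}$, using $\e_{y}^{2} = 1$. When $r_{i} = -1$, the same calculation gives $x^{-\e_{x}}_{[\e_{y}i + d - s]}$ on both sides. The statement for $\phi^{-1}_{F}$ is analogous, applying \cref{defn:inverse map} in place of \cref{defn:phi map free comp} and again using $\e_{y}^{2} = 1$ to align the indices modulo $n$.

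For (ii), I would apply $\phi^{-1}$ to the free generator $\phi_{F}(x^{r_{i}}_{i})$, using the formula in \cref{defn:inverse map}. When $r_{i} = 1$, the free component is $x^{\e_{x}}_{[\e_{y}i]}$; the relevant sign in \cref{defn:inverse map} is $\e_{x} \cdot \e_{x} = 1$, so $\phi^{-1}(x^{\e_{x}}_{[\e_{y}i]}) =_{G} x^{1}_{[\e_{y} \cdot \e_{y}i]} y^{-\e_{y} d} =_{G} x_{i}y^{-\e_{y}d}$, matching the claim. When $r_{i} = -1$, the free component is $x^{-\e_{x}}_{[\e_{y}i + d]}$; the corresponding sign is $\e_{x} \cdot (-\e_{x}) = -1$, so \cref{defn:inverse map} contributes the shift by $d$ inside the index, and a brief simplification gives $\phi^{-1}(x^{-\e_{x}}_{[\e_{y}i + d]}) =_{G} x^{-1}_{[\e_{y}(\e_{y}i + d - d)]} y^{\e_{y}d} =_{G} x^{-1}_{i}y^{\e_{y}d}$, again matching the claim.

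Part (iii) is symmetric: I would apply $\phi$ to $\phi^{-1}_{F}(x^{r_{i}}_{i})$ using \cref{lem:defn phi image gens}, splitting on whether $\e_{x}r_{i} = \pm 1$ so that the correct case of $\phi^{-1}_{F}$ from \cref{defn:inverse map} is used. In each of the four subcases the free components cancel via $\e_{y}^{2} = 1$, leaving precisely the $y$-power $\e_{x}r_{i}d$. The main obstacle throughout is the risk of sign errors in juggling the four parameters $r_{i}, \e_{x}, \e_{y}, d$, so I would organise the cases in a small table rather than interleave them prose, and appeal back to \cref{lem:relation x y} only to confirm that each $y$-power can indeed be pulled past the free part without affecting the final index calculation.
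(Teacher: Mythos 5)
Your proof is correct, and for part (i) it is essentially identical to the paper's: a direct split on $r_i = \pm 1$ using the explicit formulas for $\phi_F$, $\phi^{-1}_F$, and $\Phi_s$, together with $\e_y^2 = 1$. For parts (ii) and (iii), however, you take a genuinely different (more computational) route than the paper. The paper avoids case analysis entirely by first observing the one-line identity
\[
\phi\left(x_{i}^{r_{i}}y^{-\e_{y}r_{i}d}\right) =_{G} \phi_{F}\left(x^{r_{i}}_{i}\right)y^{r_{i}d}\,y^{-r_{i}d} =_{G} \phi_{F}\left(x^{r_{i}}_{i}\right),
\]
which follows immediately from \cref{defn:phi map free comp} and $\phi(y)=y^{\e_y}$, and then simply applies $\phi^{-1}$ to both sides to obtain (ii); part (iii) is obtained symmetrically from $\phi^{-1}\left(x^{r_i}_i y^{\e_x r_i d}\right) =_G \phi^{-1}_F\left(x^{r_i}_i\right)$. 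Your approach instead substitutes the free component $\phi_F(x_i^{r_i})$ directly into \cref{defn:inverse map} (resp.\ $\phi^{-1}_F(x_i^{r_i})$ into \cref{lem:defn phi image gens}) and verifies the claimed formula by cases on $\e_x r_i = \pm 1$. Both arguments are sound; yours makes the index bookkeeping explicit and so is a useful double-check, while the paper's trick sidesteps the sign-juggling that you yourself flag as the main risk. Two small remarks: the ``four subcases'' you mention in (iii) is really just two (the value of $\e_x r_i$ determines the exponent of the intermediate generator, which is all that \cref{lem:defn phi image gens} needs), and you do not actually need to invoke \cref{lem:relation x y} anywhere, since all the $y$-powers here appear on the right already.
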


\begin{proof}
    We provide details for $\phi \in \mathrm{Out}(G(m))$, and the relations for $\phi^{-1}$ follows similarly by \cref{defn:inverse map}. By \cref{defn:phi map free comp}, if $r_{i} = 1$, then $(i)$ follows  from the following relations (recall $\e_{y}^{2} = 1$):
\[ \Phi_{s}\left(\phi_{F}\left(x_{i}\right)\right) = \Phi_{s}\left(x^{\e_{x}}_{[\e_{y}i]}\right) = x^{\e_{x}}_{[\e_{y}i - s]}, \quad \phi_{F}\left(\Phi_{\e_{y}s}\left(x_{i}\right)\right) = \phi_{F}\left(x_{[i-\e_{y}s]}\right) = x^{\e_{x}}_{[\e_{y}i - s]}.
\]
    Similarly if $r_{i} = -1$, then we have:
    \[ \Phi_{s}\left(\phi_{F}\left(x^{-1}_{i}\right)\right) = \Phi_{s}\left(x^{-\e_{x}}_{[\e_{y}i+d]}\right) = x^{-\e_{x}}_{[\e_{y}i - s + d]}, \quad \phi_{F}\left(\Phi_{\e_{y}s}\left(x^{-1}_{i}\right)\right) = \phi_{F}\left(x^{-1}_{[i-\e_{y}s]}\right) = x^{-\e_{x}}_{[\e_{y}i-s+d]}.
    \]
    For $(ii)$ and $(iii)$, we note that
    \[ \phi\left(x_{i}^{r_{i}}y^{-\e_{y}r_{i}d}\right) =_{G} \phi_{F}\left(x^{r_{i}}_{i}\right)y^{r_{i}d}y^{-r_{i}d} =_{G} \phi_{F}\left(x^{r_{i}}_{i}\right), 
    \]
    and so (ii) follows by taking $\phi^{-1}$ of both sides. Similarly for (iii) we have
    \[ \phi^{-1}\left(x^{r_{i}}_{i}y^{\e_{x}r_{i}d}\right) =_{G} \phi^{-1}_{F}(x^{r_{i}}_{i})y^{-\e_{x}\e_{y}r_{i}d}y^{\e_{x}\e_{y}r_{i}d} =_{G} \phi^{-1}_{F}\left(x^{r_{i}}_{i}\right).   \]
\end{proof}

\begin{defn}\label{defn: free component geos}
    For any $v \in F_{n} \rtimes \Z$ in geodesic normal form, let $[v]_{F}$ denote the free component of $v$, that is, $[v]_{F} \in F_{n}$. 
\end{defn}

\begin{prop}\label{prop:non-len p results 2}
    Let $s \in \Z_{\neq 0}$, $w = x^{t_{1}}_{k_{1}}\dots x^{t_{z}}_{k_{z}} \in F_{n}$, and $\sigma = \sum_{i=1}^{z} t_{i}$. Then
    \begin{enumerate}
        \item[(i)] $[\phi(w)]_{F} =_{G} \phi_{F}\left(x^{t_{1}}_{k_{1}}\right)\Phi_{t_{1}d}\left(\phi_{F}\left(x^{t_{2}}_{k_{2}}\right)\right)\Phi_{(t_{1}+t_{2})d}\left(\phi_{F}\left(x^{t_{3}}_{k_{3}}\right)\right)\dots \Phi_{(t_{1}+ \dots + t_{z-1})d}\left(\phi_{F}\left(x^{t_{z}}_{k_{z}}\right)\right)$.
        \item[(ii)] $[\phi^{-1}(w)]_{F} =_{G} \phi^{-1}_{F}\left(x^{t_{1}}_{k_{1}}\right)\Phi_{-\e_{x}\e_{y}t_{1}d}\left(\phi^{-1}_{F}\left(x^{t_{2}}_{k_{2}}\right)\right)\dots \Phi_{-\e_{x}\e_{y}d(t_{1}+\dots + t_{z -1})}\left(\phi^{-1}_{F}\left(x^{t_{z}}_{k_{z}}\right)\right).$
        \item[(iii)] $\phi\left(\left[\phi^{-1}(w)\right]_{F}\right) =_{G} w\cdot y^{\e_{x}\sigma d}$, and $\phi^{-1}\left(\left[\phi(w)\right]_{F}\right) =_{G} w\cdot y^{-\e_{y}\sigma d}$.
        \item[(iv)] $\Phi_{s}\left(\left[\phi^{\pm 1}(w)\right]_{F}\right) =_{G} \left[\phi^{\pm 1}\left(\Phi_{\e_{y}s}(w)\right)\right]_{F}$.
        \item[(v)] $\left(\left[\phi(w)\right]_{F}\right)^{-1} =_{G} \Phi_{\sigma d}\left(\left[\phi(w)^{-1}\right]_{F}\right)$.
        \item[(vi)] The exponent sum of $[\phi(w)]_{F}$ is equal to $\e_{x}\sigma$.
        \item[(vii)] The exponent sum of $\left[\phi(w)^{-1}\right]_{F}$ is equal to $-\e_{x}\sigma$.  
    \end{enumerate}
\end{prop}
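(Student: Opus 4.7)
The strategy is to prove (i) and (ii) first by direct induction on $z$; every remaining statement then follows by short algebraic manipulations using \cref{lem:relation x y} and \cref{prop:non len p 1}. For (i), one writes $\phi(w) =_G \prod_{i=1}^{z} \phi_{F}(x^{t_i}_{k_i})\, y^{t_i d}$ using \cref{defn:phi map free comp}, and then inductively moves each accumulated $y$-power past the next $\phi_{F}$ factor via \cref{lem:relation x y} (in the form $y^{s} g =_G \Phi_{s}(g) y^{s}$ for $g \in F_{n}$), which contributes an application of $\Phi_{(t_{1}+\cdots+t_{i-1})d}$ to $\phi_{F}(x^{t_i}_{k_i})$; the final trailing $y$-power is $y^{\sigma d}$, so the free component is exactly as stated. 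Statement (ii) is proved by the same argument using \cref{defn:inverse map}, with the $i$-th accumulator now being $\Phi_{-\e_{x}\e_{y} d(t_{1}+\cdots+t_{i-1})}$.

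For (iii), use (i) to write $\phi(w) =_G [\phi(w)]_{F}\cdot y^{\sigma d}$, so $\phi^{-1}([\phi(w)]_{F}) =_G \phi^{-1}(\phi(w)\cdot y^{-\sigma d}) =_G w \cdot y^{-\e_{y}\sigma d}$, since $\phi^{-1}(y)=_G y^{\e_y}$ (because $\e_{y}^{2}=1$); the other identity is symmetric via (ii), using $\phi^{-1}(w) =_G [\phi^{-1}(w)]_{F}\cdot y^{-\e_{x}\e_{y}\sigma d}$. For (iv), apply $\Phi_{s}$ to the expansion of $[\phi(w)]_{F}$ from (i); since $\Phi_{s}$ commutes with every $\Phi_{t}$ (all are powers of $\Phi_{1}$) and \cref{prop:non len p 1}(i) yields $\Phi_{s}\circ \phi_{F} = \phi_{F}\circ \Phi_{\e_{y}s}$ on free generators, pushing $\Phi_{s}$ all the way inside converts each $\phi_{F}(x^{t_i}_{k_i})$ into $\phi_{F}(\Phi_{\e_{y}s}(x^{t_i}_{k_i}))$, which matches the expansion of $[\phi(\Phi_{\e_{y}s}(w))]_{F}$ given by (i) (the partial sums $t_{1}+\cdots+t_{i-1}$ are unchanged because $\Phi_{\e_{y}s}$ preserves exponent signs). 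The $\phi^{-1}$ case is the analogous computation using (ii).

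For (v), invert $\phi(w) =_G [\phi(w)]_{F}\cdot y^{\sigma d}$ to get $\phi(w)^{-1} =_G y^{-\sigma d}([\phi(w)]_{F})^{-1}$, then move $y^{-\sigma d}$ to the right via \cref{lem:relation x y} to obtain $[\phi(w)^{-1}]_{F} =_G \Phi_{-\sigma d}\bigl(([\phi(w)]_{F})^{-1}\bigr)$, and apply $\Phi_{\sigma d}$ to both sides. Statement (vi) follows directly from (i): each factor $\Phi_{\cdot}(\phi_{F}(x^{t_i}_{k_i}))$ is a single generator with exponent $\e_{x}t_{i}$ (both $\phi_{F}$ applied to a single generator and every $\Phi_{\cdot}$ preserve exponent signs), so the total exponent sum is $\e_{x}\sigma$. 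Finally, (vii) is obtained either by applying (vi) to $w^{-1}$ via $\phi(w)^{-1} = \phi(w^{-1})$ (whose free-generator exponent sum is $-\sigma$), or by combining (v) with (vi), since $\Phi_{-\sigma d}$ preserves exponent sums and inversion of a free-group word negates it. I expect no serious obstacle here; the only delicate point is careful bookkeeping of the signs $\e_{x},\e_{y}$ in the $\phi^{-1}$ formulas coming from \cref{defn:inverse map} when verifying (ii), (iii) and the $\phi^{-1}$ half of (iv).
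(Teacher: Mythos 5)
Your proposal is correct and follows essentially the same route as the paper: establish the expansions (i) and (ii) by pushing $y$-powers to the right, then derive (iii)--(vii) from them together with \cref{prop:non len p 1}. The only (harmless) variation is in (iii), where you apply $\phi^{\mp 1}$ globally to the identity $\phi^{\pm 1}(w) =_G [\phi^{\pm 1}(w)]_F\, y^{\cdot}$ instead of the paper's factor-by-factor use of \cref{prop:non len p 1}(ii)--(iii); both arguments are valid and of comparable length.
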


\begin{proof}
    By \cref{lem:defn phi image gens} and \cref{defn:phi map free comp} we have
    \begingroup
    \addtolength{\jot}{0.5em}
    \begin{align*}
        \phi(w) &=_{G} \phi_{F}\left(x^{t_{1}}_{k_{1}}\right)y^{t_{1}d}\phi_{F}\left(x^{t_{2}}_{k_{2}}\right)y^{t_{2}d}\phi_{F}\left(x^{t_{3}}_{k_{3}}\right)\dots \phi_{F}\left(x^{t_{z}}_{k_{z}}\right)y^{t_{z}d} \\
        &=_{G} \phi_{F}\left(x^{t_{1}}_{k_{1}}\right)\Phi_{t_{1}d}\left(\phi_{F}\left(x^{t_{2}}_{k_{2}}\right)\right)\Phi_{(t_{1}+t_{2})d}\left(\phi_{F}\left(x^{t_{3}}_{k_{3}}\right)\right)\dots \Phi_{(t_{1} + \dots + t_{z-1})d}\left(\phi_{F}\left(x^{t_{z}}_{k_{z}}\right)\right)y^{\sigma d},
    \end{align*}
    \endgroup
    after moving $y$ terms to the right, using \cref{defn:map move y right}. Similarly by \cref{defn:inverse map} we have
    \begingroup
    \addtolength{\jot}{0.5em}
    \begin{align*}
        \phi^{-1}(w) &=_{G} \phi^{-1}_{F}\left(x^{t_{1}}_{k_{1}}\right)y^{-\e_{x}\e_{y}t_{1}d}\phi^{-1}_{F}\left(x^{t_{2}}_{k_{2}}\right)y^{-\e_{x}\e_{y}t_{2}d}\dots \phi^{-1}_{F}\left(x^{t_{z}}_{k_{z}}\right)y^{-\e_{x}\e_{y}t_{z}d} \\
        &=_{G} \phi^{-1}_{F}\left(x^{t_{1}}_{k_{1}}\right)\Phi_{-\e_{x}\e_{y}t_{1}d}\left(\phi^{-1}_{F}\left(x^{t_{2}}_{k_{2}}\right)\right)\dots \Phi_{-\e_{x}\e_{y}d(t_{1}+ \dots + t_{z - 1})}\left(\phi^{-1}_{F}\left(x^{t_{z}}_{k_{z}}\right)\right)y^{-\e_{x}\e_{y}\sigma d}. 
    \end{align*}
    \endgroup
    These relations prove $(i)$ and $(ii)$. For $(iii)$, recall that $\e^{2}_{y}=1$, and so we can use $(i)$ from \cref{prop:non len p 1} to rewrite $(ii)$ as
    \[ [\phi^{-1}(w)]_{F} =_{G} \phi^{-1}_{F}\left(x^{t_{1}}_{k_{1}}\right)\phi^{-1}_{F}\left(\Phi_{-\e_{x}t_{1}d}\left(x^{t_{2}}_{k_{2}}\right)\right)\dots \phi^{-1}_{F}\left(\Phi_{-\e_{x}d(t_{1}+ \dots + t_{z - 1})}\left(x^{t_{z}}_{k_{z}}\right)\right).
    \]
    By $(iii)$ of \cref{prop:non len p 1}, we then have
    \begingroup
    \addtolength{\jot}{0.5em}
    \begin{align*}
        \phi\left(\left[\phi^{-1}(w)\right]_{F}\right) &=_{G} x^{t_{1}}_{k_{1}}y^{\e_{x}t_{1}d}\Phi_{-\e_{x}t_{1}d}\left(x^{t_{2}}_{k_{2}}\right)y^{\e_{x}t_{2}d}\dots \Phi_{-\e_{x}d(t_{1}+ \dots + t_{z - 1})}\left(x^{t_{z}}_{k_{z}}\right)y^{\e_{x}t_{z}d} \\
        &=_{G} x^{t_{1}}_{k_{1}}x^{t_{2}}_{k_{2}}\dots x^{t_{z}}_{k_{z}}y^{\e_{x}\sigma d}.
    \end{align*}
    \endgroup
    A similar method proves the second relation for $(iii)$. For $(iv)$, we provide details for the map $\phi$, and the equivalent relation for $\phi^{-1}$ follows a similar proof. For the left hand side, we have
    \[ \Phi_{s}\left(\left[\phi(w)\right]_{F}\right) =_{G} \Phi_{s}\left(\phi_{F}\left(x^{t_{1}}_{k_{1}}\right)\Phi_{t_{1}d}\left(\phi_{F}\left(x^{t_{2}}_{k_{2}}\right)\right)\dots \Phi_{(t_{1}+\dots + t_{z - 1})d}\left(\phi_{F}\left(x^{t_{z}}_{k_{z}}\right)\right)\right).
    \]
    For the right hand side, we have
    \begingroup
    \addtolength{\jot}{0.5em}
    \begin{align*}
        \left[\phi\left(\Phi_{\e_{y}s}(w)\right)\right]_{F} &=_{G} \phi_{F}\left(\Phi_{\e_{y}s}\left(x^{t_{1}}_{k_{1}}\right)\right)\Phi_{t_{1}d}\left(\phi_{F}\left(\Phi_{\e_{y}s}\left(x^{t_{2}}_{k_{2}}\right)\right)\right)\dots \Phi_{(t_{1}+\dots + t_{z - 1})d}\left(\phi_{F}\left(\Phi_{\e_{y}s}\left(x^{t_{z}}_{k_{z}}\right)\right)\right) \\
        &= \Phi_{s}\left(\phi_{F}\left(x^{t_{1}}_{k_{1}}\right)\right)\Phi_{s+t_{1}d}\left(\phi_{F}\left(x^{t_{2}}_{k_{2}}\right)\right)\dots \Phi_{s+(t_{1}+\dots + t_{z - 1})d}\left(\phi_{F}\left(x^{t_{z}}_{k_{z}}\right)\right) \\
        &= \Phi_{s}\left(\phi_{F}\left(x^{t_{1}}_{k_{1}}\right)\Phi_{t_{1}d}\left(\phi_{F}\left(x^{t_{2}}_{k_{2}}\right)\right)\dots \Phi_{(t_{1}+\dots + t_{z - 1})d}\left(\phi_{F}\left(x^{t_{z}}_{k_{z}}\right)\right)\right) \\
        &=_{G} \Phi_{s}([\phi(w)]_{F}),
    \end{align*}
    \endgroup
    using $(i)$ of \cref{prop:non len p 1}. To prove $(v)$, we consider each side of the equation in turn. The 
    left hand side is equal to
    \[ \left(\left[\phi(w)\right]_{F}\right)^{-1} =_{G} \Phi_{(t_{1}+t_{2} + \dots + t_{z-1})d}\left(\phi_{F}\left(x^{t_{z}}_{k_{z}}\right)^{-1}\right)\dots \phi_{F}\left(x^{t_{1}}_{k_{1}}\right)^{-1}.
    \]
    For the right hand side, we first consider $\phi(w)^{-1}$, which gives us
    \begin{align*}
        \phi(w)^{-1} &=_{G} y^{-\sigma d} \Phi_{(t_{1}+\dots + t_{z-1})d}\left(\phi_{F}\left(x^{t_{z}}_{k_{z}}\right)^{-1}\right)\dots \phi_{F}\left(x^{t_{1}}_{k_{1}}\right)^{-1} \\
        &=_{G} \Phi_{-t_{z}d}\left(\phi_{F}\left(x^{t_{z}}_{k_{z}}\right)^{-1}\right)\dots \Phi_{-\sigma d}\left(\phi_{F}\left(x^{t_{1}}_{k_{1}}\right)^{-1}\right)y^{-\sigma d}.
    \end{align*}
    Therefore
    \begin{align*}
        \left[\phi(w)^{-1}\right]_{F} &=_{G} \Phi_{-t_{z}d}\left(\phi_{F}\left(x^{t_{z}}_{k_{z}}\right)^{-1}\right)\dots \Phi_{-\sigma d}\left(\phi_{F}\left(x^{t_{1}}_{k_{1}}\right)^{-1}\right) \\
        &= \Phi_{-\sigma d}\left(\Phi_{(t_{1}+t_{2} + \dots + t_{z-1})d}\left(\phi_{F}\left(x^{t_{z}}_{k_{z}}\right)^{-1}\right)\dots \phi_{F}\left(x^{t_{1}}_{k_{1}}\right)^{-1}\right) \\
        &=_{G} \Phi_{-\sigma d}\left(\left(\left[\phi(w)\right]_{F}\right)^{-1}\right),
    \end{align*}
    as required. Finally we note that each free generator $x^{t_{i}}_{k_{i}} \in w$ will contribute $\e_{x}t_{i}$ to the exponent sum of $[\phi(w)]_{F}$. This is then inverted for $\left[\phi(w)^{-1}\right]_{F}$, which proves $(vi)$ and $(vii)$.
\end{proof}

\comm{
\begin{lemma}\label{prop:non len relation ell phi}
    Let $s \in \Z_{\neq 0}$ and $r_{i} \in \{\pm 1\}$. Then $\Phi_{s}\left(\phi^{\pm 1}_{F}\left(x^{r_{i}}_{i}\right)\right) = \phi^{\pm 1}_{F}\left(\Phi_{\e_{y}s}\left(x^{r_{i}}_{i}\right)\right)$.
\end{lemma}
\begin{proof}
    We provide details for the map $\phi$ using \cref{defn:phi map free comp}, and the relation for $\phi^{-1}$ follows similarly by \cref{defn:inverse map}. 
    
    If $r_{i} = 1$, then the result follows  from the following relations (recall $\e_{y}^{2} = 1$):
\[ \Phi_{s}\left(\phi_{F}\left(x_{i}\right)\right) = \Phi_{s}\left(x^{\e_{x}}_{[\e_{y}i]}\right) = x^{\e_{x}}_{[\e_{y}i - s]}, \quad \phi_{F}\left(\Phi_{\e_{y}s}\left(x_{i}\right)\right) = \phi_{F}\left(x^{\e_{x}}_{[i-\e_{y}s]}\right) = x^{\e_{x}}_{[\e_{y}i - s]}.
\]
    Similarly if $r_{i} = -1$, then we have:
    \[ \Phi_{s}\left(\phi_{F}\left(x^{-1}_{i}\right)\right) = \Phi_{s}\left(x^{-\e_{x}}_{[\e_{y}i+d]}\right) = x^{-\e_{x}}_{[\e_{y}i - s + d]}, \quad \phi_{F}\left(\Phi_{\e_{y}s}\left(x^{-1}_{i}\right)\right) = \phi_{F}\left(x^{-\e_{x}}_{[i-\e_{y}s]}\right) = x^{-\e_{x}}_{[\e_{y}i-s+d]}.
    \]
\end{proof}}
 \comm{
\begin{lemma}\label{lemma:0}
     Let $w = x^{q_{1}}_{j_{1}}\dots x^{q_{\alpha}}_{j_{\alpha}} \in F_{p}$. Then 
    \[ \Phi_{s}([\phi^{\pm 1}(w)]_{F}) = [\phi^{\pm 1}(\Phi_{\e_{y}s}(w))]_{F}
    \]
\end{lemma}
\myRed{Check inverse map by hand.}
\begin{proof}
    We provide details for the map $\phi$, and the equivalent relation for $\phi^{-1}$ follows a similar proof. For the left hand side, we have
    \[ \Phi_{s}([\phi(w)]_{F}) = \Phi_{s}(\phi_{F}(x^{q_{1}}_{j_{1}})\Phi_{q_{1}d}(\phi_{F}(x^{q_{2}}_{j_{2}}))\dots \Phi_{(q_{1}+\dots + q_{\alpha - 1})d}(\phi_{F}(x^{q_{\alpha}}_{j_{\alpha}}))).
    \]
    For the right hand side, we have
    \begin{align*}
        [\phi(\Phi_{\e_{y}s}(w))]_{F} &= \phi_{F}(\Phi_{\e_{y}s}(x^{q_{1}}_{j_{1}}))\Phi_{q_{1}d}(\phi_{F}(\Phi_{\e_{y}s}(x^{q_{2}}_{j_{2}})))\dots \Phi_{(q_{1}+\dots + q_{\alpha - 1})d}(\phi_{F}(\Phi_{\e_{y}s}(x^{q_{\alpha}}_{j_{\alpha}}))) \\
        &= \Phi_{s}(\phi_{F}(x^{q_{1}}_{j_{1}}))\Phi_{s+q_{1}d}(\phi_{F}(x^{q_{2}}_{j_{2}}))\dots \Phi_{s+(q_{1}+\dots + q_{\alpha - 1})d}(\phi_{F}(x^{q_{\alpha}}_{j_{\alpha}})) \\
        &= \Phi_{s}(\phi_{F}(x^{q_{1}}_{j_{1}})\Phi_{q_{1}d}(\phi_{F}(x^{q_{2}}_{j_{2}}))\dots \Phi_{(q_{1}+\dots + q_{\alpha - 1})d}(\phi_{F}(x^{q_{\alpha}}_{j_{\alpha}}))) \\
        &= \Phi_{s}([\phi(w)]_{F}),
    \end{align*}
    using \cref{prop:non len p 1}. 
\end{proof}}

\comm{
\begin{lemma}\label{lemma:2}
     Let $0 \leq i \leq p-1$, and let $r_{i} \in \{\pm 1\}$. For all generators $x_{i} \in \overline{X}$, we have
    \[ \phi^{-1}(\phi_{F}(x^{r_{i}}_{i})) =_{G} x^{r_{i}}_{i}y^{-\e_{y}r_{i}d}, \quad \text{and} \quad \phi(\phi^{-1}_{F}(x^{r_{i}}_{i})) =_{G} x^{r_{i}}_{i}y^{\e_{x}r_{i}d}.
    \]
\end{lemma}
\begin{proof}
    We first note that
    \[ \phi(x_{i}^{r_{i}}y^{-\e_{y}r_{i}d}) = \phi_{F}(x^{r_{i}}_{i})y^{r_{i}d}y^{-r_{i}d} = \phi_{F}(x^{r_{i}}_{i})), 
    \]
    and so the first relation follows by taking $\phi^{-1}$ of both sides. Similarly we have
    \[ \phi^{-1}(x^{r_{i}}_{i}y^{\e_{x}r_{i}d}) = \phi^{-1}_{F}(x^{r_{i}}_{i})y^{-\e_{x}\e_{y}r_{i}d}y^{\e_{x}\e_{y}r_{i}d} = \phi^{-1}_{F}(x^{r_{i}}_{i}).
    \]
\end{proof}}
\comm{
\begin{lemma}\label{lemma:non len phi inverse F}
    Let $w = x^{q_{1}}_{j_{1}}\dots x^{q_{\alpha}}_{j_{\alpha}} \in F_{p}$. Then
    \[ [\phi^{-1}(w)]_{F} = \phi^{-1}_{F}(x^{q_{1}}_{j_{1}})\Phi_{-\e_{x}\e_{y}q_{1}d}(\phi^{-1}_{F}(x^{q_{2}}_{j_{2}}))\dots \Phi_{-\e_{x}\e_{y}d(q_{1}+\dots + q_{\alpha -1})}(\phi^{-1}_{F}(x^{q_{\alpha}}_{j_{\alpha}})).
    \]
    Let $\sigma$ denote the sum of the exponents of $w$. Then
    \[ \phi([\phi^{-1}(w)]_{F}) = w\cdot y^{\e_{x}\sigma d}
    \]
\end{lemma}

\begin{proof}
    By definition we have
    \begin{align*}
        \phi^{-1}(w) &= \phi^{-1}_{F}(x^{q_{1}}_{j_{1}})y^{-\e_{x}\e_{y}q_{1}d}\phi^{-1}_{F}(x^{q_{2}}_{j_{2}})y^{-\e_{x}\e_{y}q_{2}d}\dots \phi^{-1}_{F}(x^{q_{\alpha}}_{j_{\alpha}})y^{-\e_{x}\e_{y}q_{\alpha}d} \\
        &=_{G} \phi^{-1}_{F}(x^{q_{1}}_{j_{1}})\Phi_{-\e_{x}\e_{y}q_{1}d}(\phi^{-1}_{F}(x^{q_{2}}_{j_{2}}))\dots \Phi_{-\e_{x}\e_{y}d(q_{1}+ \dots + q_{\alpha - 1})}(\phi^{-1}_{F}(x^{q_{\alpha}}_{j_{\alpha}}))y^{-\e_{x}\e_{y}\sigma d}. 
    \end{align*}
    Using \cref{prop:non len p 1}, we can rewrite this to get
    \[ [\phi^{-1}(w)]_{F} = \phi^{-1}_{F}(x^{q_{1}}_{j_{1}})\phi^{-1}_{F}(\Phi_{-\e_{x}q_{1}d}(x^{q_{2}}_{j_{2}}))\dots \phi^{-1}_{F}(\Phi_{-\e_{x}d(q_{1}+ \dots + q_{\alpha - 1})}(x^{q_{\alpha}}_{j_{\alpha}})).
    \]
    By repeated application of \cref{prop:non len p 1}, we then have
    \begin{align*}
        \phi([\phi^{-1}(w)]_{F}) &= x^{q_{1}}_{j_{1}}y^{\e_{x}q_{1}d}\Phi_{-\e_{x}q_{1}d}(x^{q_{2}}_{j_{2}})y^{\e_{x}q_{2}d}\dots \Phi_{-\e_{x}d(q_{1}+ \dots + q_{\alpha - 1})}(x^{q_{\alpha}}_{j_{\alpha}})y^{\e_{x}q_{\alpha}d} \\
        &=_{G} x^{q_{1}}_{j_{1}}x^{q_{2}}_{j_{2}}\dots x^{q_{\alpha}}_{j_{\alpha}}y^{\e_{x}\sigma d},
    \end{align*}
    which completes the proof. 
\end{proof}}

\begin{cor}\label{lemma:1}
    Let $w = w_{1}w_{2} \in F_{n}$ be geodesic, where $w_{1} =  x^{t_{1}}_{k_{1}}\dots x^{t_{z_{1}}}_{k_{z_{1}}}$, and $w_{2} = x^{t_{z_{1}+1}}_{k_{z_{1}+1}}\dots x^{t_{z_{2}}}_{k_{z_{2}}}$, for some $1 \leq z_{1} \leq z_{2}$. Let $\sigma_{1} = \sum_{i=1}^{z_{1}} t_{i}$. Then
    \[ [\phi(w)]_{F} =_{G} [\phi(w_{1})]_{F}\cdot \Phi_{\sigma_{1}d}\left(\left[\phi(w_{2})\right]_{F}\right).
    \]
\end{cor}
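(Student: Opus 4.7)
The plan is to unfold both sides using part $(i)$ of \cref{prop:non-len p results 2}, split the product at the boundary between $w_1$ and $w_2$, and then pull the common shift $\Phi_{\sigma_1 d}$ out of the second block using the fact that $\Phi_s$ is an automorphism of $F_n$ satisfying $\Phi_s \circ \Phi_t = \Phi_{s+t}$ (see \cref{defn:map move y right}).

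First I would apply $(i)$ of \cref{prop:non-len p results 2} to $w = w_1 w_2 = x^{t_1}_{k_1}\dots x^{t_{z_2}}_{k_{z_2}}$, obtaining the product
\[ [\phi(w)]_F =_G \phi_F(x^{t_1}_{k_1}) \cdot \Phi_{t_1 d}(\phi_F(x^{t_2}_{k_2})) \cdots \Phi_{(t_1 + \dots + t_{z_2 -1})d}(\phi_F(x^{t_{z_2}}_{k_{z_2}})). \]
I would then split this product at position $z_1$. The first $z_1$ factors are precisely the expression given by part $(i)$ of \cref{prop:non-len p results 2} applied to $w_1$, hence equal to $[\phi(w_1)]_F$.

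Next I would analyse the remaining $z_2 - z_1$ factors. For each $z_1 + 1 \leq j \leq z_2$, the shift appearing in front of $\phi_F(x^{t_j}_{k_j})$ is $\Phi_{(t_1 + \dots + t_{j-1})d} = \Phi_{\sigma_1 d + (t_{z_1 + 1} + \dots + t_{j-1})d}$, and by \cref{defn:map move y right} this decomposes as $\Phi_{\sigma_1 d} \circ \Phi_{(t_{z_1 + 1} + \dots + t_{j-1})d}$. Since $\Phi_{\sigma_1 d}$ is an automorphism of $F_n$, I can pull it out across the whole second block as a single factor in front, yielding
\[ \Phi_{\sigma_1 d}\Bigl( \phi_F(x^{t_{z_1+1}}_{k_{z_1+1}}) \cdot \Phi_{t_{z_1+1} d}(\phi_F(x^{t_{z_1+2}}_{k_{z_1+2}})) \cdots \Phi_{(t_{z_1+1} + \dots + t_{z_2 -1})d}(\phi_F(x^{t_{z_2}}_{k_{z_2}})) \Bigr). \]
A second application of $(i)$ of \cref{prop:non-len p results 2}, this time to $w_2$, identifies the argument of $\Phi_{\sigma_1 d}$ with $[\phi(w_2)]_F$, giving the desired equality. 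The only thing to take care of is the index bookkeeping at the split point; there is no genuine obstacle, as everything reduces to the composition law $\Phi_s \circ \Phi_t = \Phi_{s+t}$ and the fact that $\Phi_s$ distributes across products in $F_n$.
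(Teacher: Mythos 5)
Your proposal is correct and is exactly the argument the paper intends: the paper's proof simply cites part $(i)$ of \cref{prop:non-len p results 2}, and your unfolding, splitting at position $z_1$, and extraction of $\Phi_{\sigma_1 d}$ via the composition law $\Phi_s \circ \Phi_t = \Phi_{s+t}$ spells out the same derivation in full detail.
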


\begin{proof}
    The result follows by $(i)$ of \cref{prop:non-len p results 2}.
\end{proof}

\begin{lemma}\label{lemma:3}
    Let $w,v \in F_{n}$. Then $[\phi(w)]_{F} =_{G} v$ if and only if $w =_{G} [\phi^{-1}(v)]_{F}$.      
\end{lemma}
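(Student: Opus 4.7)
The statement is a clean consequence of part $(iii)$ of \cref{prop:non-len p results 2}, which tells us that applying $\phi$ to $[\phi^{-1}(w)]_F$ (or $\phi^{-1}$ to $[\phi(w)]_F$) recovers $w$ up to a pure $y$-power factor. The plan is to apply $\phi^{\pm 1}$ to both sides of the given equation and then compare free components using uniqueness of the decomposition $F_n \rtimes \Z$.

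For the forward direction, suppose $[\phi(w)]_F =_G v$. Write $w = x_{k_1}^{t_1}\cdots x_{k_z}^{t_z}$ with exponent sum $\sigma = \sum_i t_i$. Applying $\phi^{-1}$ to both sides gives
\[
\phi^{-1}\bigl([\phi(w)]_F\bigr) =_G \phi^{-1}(v),
\]
and the second equation in $(iii)$ of \cref{prop:non-len p results 2} rewrites the left-hand side as $w \cdot y^{-\e_y \sigma d}$. Hence $\phi^{-1}(v) =_G w \cdot y^{-\e_y \sigma d}$, which is already in the canonical $F_n \rtimes \Z$ form. By uniqueness of this decomposition, $[\phi^{-1}(v)]_F =_G w$, as required.

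The converse direction is symmetric. If $w =_G [\phi^{-1}(v)]_F$, then applying $\phi$ gives $\phi(w) =_G \phi\bigl([\phi^{-1}(v)]_F\bigr)$, and the first equation of $(iii)$ of \cref{prop:non-len p results 2} (applied to $v$, whose exponent sum we may call $\tau$) converts the right-hand side into $v \cdot y^{\e_x \tau d}$. Taking free components of both sides of $\phi(w) =_G v \cdot y^{\e_x \tau d}$ yields $[\phi(w)]_F =_G v$.

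There is no real obstacle here: the only step to be careful about is that the product $w \cdot y^{-\e_y \sigma d}$ (respectively $v \cdot y^{\e_x \tau d}$) is indeed of the form (free group element) $\cdot \, y^t$, so that reading off the free component is immediate from the semidirect product structure of \cref{prop:semidirect pres} and requires no further manipulation.
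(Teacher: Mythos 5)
Your proof is correct and follows essentially the same route as the paper: both arguments apply $\phi^{\mp 1}$, invoke the identities of $(iii)$ of \cref{prop:non-len p results 2} to peel off the extraneous $y$-power, and read off the free component via the uniqueness of the $F_{n}\rtimes\Z$ decomposition. The only cosmetic difference is that the paper applies $\phi^{-1}$ to the full element $\phi(w)=_{G}vy^{\sigma d}$ rather than to its free component, which amounts to the same computation.
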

\begin{proof}
    For the forward direction, we can assume that $\phi(w) =_{G} vy^{\sigma d}$, where $\sigma$ equals the exponent sum of $w$. Then $w =_{G} \phi^{-1}\left(vy^{\sigma d}\right)$, and so $\left[\phi^{-1}\left(vy^{\sigma d}\right)\right]_{F} = \left[\phi^{-1}(v)\right]_{F} =_{G} w$. The reverse direction follows a symmetric argument. 
\end{proof}

\comm{
\begin{lemma}\label{lemma:4}
    Let $w \in F_{p}$, and let $\sigma$ denote the exponent sum of $w$. Then
    \[ ([\phi(w)]_{F})^{-1} = \Phi_{\sigma d}([\phi(w)^{-1}]_{F}
    \]
\end{lemma}

\begin{proof}
    Let $w = x^{q_{1}}_{j_{1}}\dots x^{q_{\alpha}}_{j_{\alpha}}$. By definition we have
    \[ [\phi(w)]_{F} = \phi_{F}(x^{q_{1}}_{j_{1}})\Phi_{q_{1}d}(\phi_{F}(x^{q_{2}}_{j_{2}}))\Phi_{(q_{1}+q_{2})d}(\phi_{F}(x^{q_{3}}_{j_{3}}))\dots \Phi_{(q_{1}+q_{2} + \dots + q_{\alpha-1})d}(\phi_{F}(x^{q_{\alpha}}_{j_{\alpha}}))
    \]
    Therefore for the left hand side we have
    \[ ([\phi(w)]_{F})^{-1} = \Phi_{(q_{1}+q_{2} + \dots + q_{\alpha-1})d}(\phi_{F}(x^{q_{\alpha}}_{j_{\alpha}})^{-1})\dots \phi_{F}(x^{q_{1}}_{j_{1}})^{-1}.
    \]
    For the right hand side, we have
    \begin{align*}
        [\phi(w)^{-1}]_{F} &= \Phi_{-q_{\alpha}d}(\phi_{F}(x^{q_{\alpha}}_{j_{\alpha}})^{-1})\dots \Phi_{-\sigma d}(\phi_{F}(x^{q_{1}}_{j_{1}})^{-1}) \\
        &= \Phi_{-\sigma d}(\Phi_{(q_{1}+q_{2} + \dots + q_{\alpha-1})d}(\phi_{F}(x^{q_{\alpha}}_{j_{\alpha}})^{-1})\dots \phi_{F}(x^{q_{1}}_{j_{1}})^{-1}) \\
        &= \Phi_{-\sigma d}(([\phi(w)]_{F})^{-1}),
    \end{align*}
    as required.
\end{proof}

\begin{cor}\label{lemma:5}
    Let $w = x^{q_{1}}_{j_{1}}\dots x^{q_{\alpha}}_{j_{\alpha}}$. Let $\sigma = q_{1}+\dots + q_{\alpha}$. Then
    \begin{enumerate}
        \item The exponent sum of $[\phi(w)]_{F}$ is equal to $\e_{x}\sigma$.
        \item The exponent sum of $[\phi(w)^{-1}]_{F}$ is equal to $-\e_{x}\sigma$.  
    \end{enumerate}
\end{cor}
\begin{proof}
    Each free generator $x^{q_{i}}_{j_{i}}$ will contribute $\e_{x}q_{i}$ to the exponent sum of $[\phi(w)]_{F}$. This is then reversed for $[\phi(w)^{-1}]_{F}$. 
\end{proof}}
\section{Algorithm for the Twisted Conjugacy Problem}\label{sec:algorithm}
Our main aim of this section is to solve the twisted conjugacy problem $\mathrm{TCP}_{\phi}(G(m))$, with respect to a given $\phi \in \mathrm{Out}(G(m))$ as in \cref{even:all auto forms}. We will show that for any two words $u,v \in X^{\ast}$ such that $u \sim_{\phi} v$, there exists minimal length representatives $\overline{u}$ and $\overline{v}$, which are twisted conjugate to $u$ and $v$ respectively, such that there exists a finite sequence of `shifts' between $\overline{u}$ and $\overline{v}$. These shifts will act in a similar way as $\phi$-cyclic permutations defined in \cite{crowe_twisted_2024}, which will enable us to construct a finite set of minimal length representatives for each twisted conjugacy class. 

To solve the more general twisted conjugacy problem $\mathrm{TCP}(G(m))$, we cannot check all $\phi \in \mathrm{Out}(G(m))$, since $\mathrm{Out}(G(m))$ is infinite. However, we can establish conditions on the parameter $d \in \Z$, which is then sufficient to determine if $u \sim_{\phi} v$ for some $\phi \in \mathrm{Out}(G(m))$, where $u,v \in X^{\ast}$.

\subsection{Shifts}
For this section, we assume $\phi \in \mathrm{Out}(G(m))$ is known and of the form in \cref{even:all auto forms}.

Our first step is to introduce a new normal form, which will be easier to work with when considering shifts. Recall our geodesic normal form, that is, any geodesic $g \in X^{\ast}$ can be written uniquely in the form $g =_{G} g_{1}g_{2}$, where $g_{1} \in F_{n}$ and $g_{2} = y^{t}$ for some $t \in \Z$. We can write $t = kn + c$, where $0 \leq c \leq n-1$, and $k \in \Z$. Since $y^{kn}$ is central in $G(m)$, for all $k \in \Z$, we can rewrite any geodesic such that all powers of $y^{n}$ are moved to the right hand side of the word.

\begin{defn}\label{defn:mod normal form}
     Any geodesic $g \in X^{\ast}$ can be written uniquely in the form $g =_{G} \left(hy^{c}, y^{kn}\right)$, where $hy^{c}$ is a geodesic representing an element of $F_{n} \rtimes \Z/n\Z$. We say $\left(hy^{c}, y^{kn}\right)$ is the \emph{modular normal form} for $g$. Note $w = hy^{c}y^{kn} \in X^{\ast}$ may not necessarily be geodesic in $G(m)$.
\end{defn}
For the remainder of this section we will use our modular normal form when representing group elements. We denote $G_{\mathrm{mod}}$ to be the set of all words in modular normal form representing group elements from $G(m) \cong (F_{n} \rtimes \Z/n\Z) \times \Z$. For any word $(hy^{c}, y^{kn}) \in G_{\mathrm{mod}}$ which represents an element $w \in G(m)$, we say $h$ is the \emph{free component} of $w$ (i.e. $h \in F_{n})$, $hy^{c}$ is the \emph{quotient element} of $w$, and $y^{kn}$ is the \emph{Garside power} of $w$.  

We now consider what happens when we twisted conjugate a word $u \in G_{\mathrm{mod}}$ by an element $w \in \Z$. 

\begin{prop}\label{prop:y shift formula}
    Let $u = (u_{1}y^{\alpha_{1}}, y^{\alpha_{2}}) \in G_{\mathrm{mod}}$ be a modular normal form, where $\alpha = \alpha_{1}+\alpha_{2}$. Let $\lambda \in \Z$. Then
    \[ u = u_{1}y^{\alpha} \sim_{\phi} \Phi_{-\e_{y}\lambda}(u_{1})y^{\alpha+\lambda(1-\e_{y})}.     \]
\end{prop}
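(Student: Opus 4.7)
The plan is to exhibit an explicit conjugator $w \in G(m)$ and compute $\phi(w)^{-1} u w$ directly, reducing to a single application of \cref{lem:relation x y} (phrased through the map $\Phi_s$ from \cref{defn:map move y right}). The natural guess for the conjugator is $w = y^{\lambda}$: since we want to shift the free component by $\Phi_{-\e_y\lambda}$ and the exponent of $y$ is to change by $\lambda(1-\e_y)$, and since $\phi$ acts on $y$ by $y \mapsto y^{\e_y}$, the $y^{\lambda}$ on the right together with the $y^{-\e_y\lambda}$ coming from $\phi(w)^{-1}$ should produce exactly this net shift.

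Concretely, I would first write $u = u_1 y^{\alpha}$ with $\alpha = \alpha_1 + \alpha_2$ (the Garside power $y^{\alpha_2}$ is central and so can be absorbed). Taking $w = y^{\lambda}$, the definition of $\phi$ in \cref{even:all auto forms} gives $\phi(w) = y^{\e_y \lambda}$, hence $\phi(w)^{-1} = y^{-\e_y\lambda}$. Substituting,
\[
\phi(w)^{-1} u w \;=_{G}\; y^{-\e_y\lambda}\, u_1\, y^{\alpha + \lambda}.
\]

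Next I would push the leading $y^{-\e_y\lambda}$ across $u_1 \in F_n$. Writing $u_1 = x_{i_1}^{p_1}\cdots x_{i_q}^{p_q}$ and applying \cref{lem:relation x y} letter-by-letter (equivalently, using that $\Phi_s$ from \cref{defn:map move y right} is the automorphism of $F_n$ defined by $y^s u_1 =_G \Phi_s(u_1) y^s$), we get
\[
y^{-\e_y\lambda}\, u_1 \;=_{G}\; \Phi_{-\e_y\lambda}(u_1)\, y^{-\e_y\lambda}.
\]
Combining the two displays yields
\[
\phi(w)^{-1} u w \;=_{G}\; \Phi_{-\e_y\lambda}(u_1)\, y^{-\e_y\lambda + \alpha + \lambda} \;=_{G}\; \Phi_{-\e_y\lambda}(u_1)\, y^{\alpha + \lambda(1-\e_y)},
\]
which is exactly the claimed representative, so $u \sim_{\phi} \Phi_{-\e_y\lambda}(u_1)\, y^{\alpha + \lambda(1-\e_y)}$.

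There is essentially no obstacle here, since the statement is really a direct computation once the right conjugator has been identified; the only non-trivial point is recognising that commuting $y$-powers across $F_n$ is encoded by $\Phi_s$ and that $\phi$ acts on $y$ by the sign $\e_y$, which together produce the asymmetry $\lambda(1-\e_y)$ in the final $y$-exponent (it vanishes when $\e_y = 1$ and doubles to $2\lambda$ when $\e_y = -1$). I would briefly remark on this sanity check at the end to motivate why the same shift is called a "$y$-shift" in both length-preserving and non-length-preserving cases.
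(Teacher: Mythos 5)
Your proposal is correct and matches the paper's proof essentially verbatim: the paper also takes $w = y^{\lambda}$, computes $\phi(w)^{-1}uw = y^{-\e_{y}\lambda}u_{1}y^{\alpha+\lambda}$, and pushes the leading $y$-power through $u_{1}$ via $\Phi_{-\e_{y}\lambda}$ using \cref{lem:relation x y}. No gaps.
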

\begin{proof}
    Let $u_{1} = x^{p_{1}}_{i_{1}}\dots x^{p_{q}}_{i_{q}} \in F_{n}$, and let $w = y^{\lambda}$. Then 
    \begin{align*}
        \phi(w)^{-1}uw &= y^{-\e_{y}\lambda}\cdot x^{p_{1}}_{i_{1}}\dots x^{p_{q}}_{i_{q}}\cdot y^{\alpha+\lambda} \\
        &=_{G} \Phi_{-\e_{y}\lambda}\left(x^{p_{1}}_{i_{1}}\dots x^{p_{q}}_{i_{q}}\right) y^{\alpha+\lambda(1-\e_{y})} \\
        &= \Phi_{-\e_{y}\lambda}(u_{1})y^{\alpha+\lambda(1-\e_{y})}.\tag*{\qedhere}
    \end{align*}
\end{proof}
Note that since $\Phi$ is an automorphism and $u_{1}$ is geodesic, then $\Phi_{-\e_{y}\lambda}(u_{1})$ is geodesic. In particular, when we twisted conjugate by an element $w \in \Z$, the length of the free component does not change. This allows us to define our first type of shift.
\begin{defn}\label{defn: len p y shift}
    Let $u = (u_{1}y^{\alpha_{1}}, y^{\alpha_{2}}), v = (v_{1}y^{\beta_{1}}, y^{\beta_{2}}) \in G_{\mathrm{mod}}$, where $\alpha = \alpha_{1}+\alpha_{2}$ and $\beta = \beta_{1}+\beta_{2}$. We say $u$ and $v$ are related by a \emph{$y$-shift} if $v_{1} = \Phi_{-\e_{y}\lambda}(u_{1})$ and $\beta = \alpha +\lambda(1-\e_{y})$, for some $\lambda \in \Z$. We denote a $y$-shift by $u \xleftrightarrow{y} v$, and note that if such a $y$-shift exists between $u$ and $v$, then $u \sim_{\phi} v$ by \cref{prop:y shift formula}. 
\end{defn}
One immediate consequence of \cref{prop:y shift formula} is the following result, due to the fact that any powers of $y^{n}$ are central in $G(m)$. 
\begin{cor}\label{cor: len p garside}
    Let $u = (u_{1}y^{\alpha_{1}}, y^{\alpha_{2}})  \in G_{\mathrm{mod}}$, where $\alpha = \alpha_{1}+\alpha_{2}$. Let $\lambda \in \Z$. Then $u = u_{1}y^{\alpha} \sim_{\phi} u_{1}y^{\alpha+\lambda n(1-\e_{y})}$.
\end{cor}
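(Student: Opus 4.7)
The plan is to deduce this directly from Proposition~\ref{prop:y shift formula} by substituting $\lambda n$ in place of $\lambda$. Applying the proposition with shift parameter $\lambda n \in \Z$ yields
\[
u = u_{1}y^{\alpha} \sim_{\phi} \Phi_{-\e_{y}\lambda n}(u_{1})\,y^{\alpha + \lambda n(1-\e_{y})},
\]
so it only remains to verify that $\Phi_{-\e_{y}\lambda n}(u_{1}) = u_{1}$. This is where the centrality of $y^{n}$ is effectively encoded in the map $\Phi$: recall from Definition~\ref{defn:map move y right} that $\Phi_{s}$ acts on each free generator by $x^{\e}_{i} \mapsto x^{\e}_{[i-s]}$, where $[\cdot]$ reduces mod $n$. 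Since $-\e_{y}\lambda n \equiv 0 \Mod{n}$, we have $[i - (-\e_{y}\lambda n)] = i$ for every $0 \leq i \leq n-1$, so $\Phi_{-\e_{y}\lambda n}$ fixes every free generator and hence is the identity on $F_{n}$. Substituting back gives the claimed conjugacy.

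There is essentially no obstacle here, as the corollary is a direct specialisation of Proposition~\ref{prop:y shift formula}; the only point worth making explicit is the observation that $\Phi_{kn} = \mathrm{id}_{F_{n}}$ for all $k \in \Z$, which is an immediate consequence of the modular reduction in Definition~\ref{defn:map move y right} and which is the algebraic shadow of the fact that $y^{n}$ lies in the centre of $G(m)$.
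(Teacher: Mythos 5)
Your proof is correct and matches the paper's intent: the paper states the corollary without a written proof, calling it an immediate consequence of Proposition~\ref{prop:y shift formula} together with the centrality of $y^{n}$, and your argument is exactly that specialisation (take shift parameter $\lambda n$ and observe $\Phi_{kn} = \mathrm{id}_{F_{n}}$ by the modular reduction of indices). Nothing is missing.
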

\comm{
\begin{rmk}\label{rmk:len p  y shift garside}
    One observation of \cref{cor: len p garside} is that any $y$-shifts by powers of $y^{p}$ do not change the free component. Indeed any powers of $y^{p}$ are central. 
\end{rmk}}
We now consider twisted conjugacy by an arbitrary element $w \in G_{\mathrm{mod}}$. Let \\$u = \left(x^{p_{1}}_{i_{1}}\dots x^{p_{q}}_{i_{q}}y^{\alpha_{1}}, y^{\alpha_{2}}\right) \in G_{\mathrm{mod}}$ be a modular normal form, and suppose we twisted conjugate $u$ by an element $w = \left(x^{t_{1}}_{k_{1}}\dots x^{t_{z}}_{k_{z}}y^{\lambda_{1}}, y^{\lambda_{2}}\right) \in G_{\mathrm{mod}}$, where $z \geq 1$. Let $\sigma = \sum_{i=1}^{z} t_{i}$. We first compute $\phi(w)$, which gives us
\begingroup
\addtolength{\jot}{0.5em}
\begin{align*}
        \phi(w) &=_{G} \phi_{F}\left(x^{t_{1}}_{k_{1}}\right)y^{t_{1}d}\phi_{F}\left(x^{t_{2}}_{k_{2}}\right)y^{t_{2}d}\dots \phi_{F}\left(x^{t_{z}}_{k_{z}}\right)y^{t_{z}d}y^{\e_{y}(\lambda_{1}+\lambda_{2})} \\
        &=_{G} \phi_{F}\left(x^{t_{1}}_{k_{1}}\right)\Phi_{t_{1}d}\left(\phi_{F}\left(x^{t_{2}}_{k_{2}}\right)\right)\Phi_{(t_{1}+t_{2})d}\left(\phi_{F}\left(x^{t_{3}}_{k_{3}}\right)\right)\dots \Phi_{(t_{1} + \dots + t_{z-1})d}\left(\phi_{F}\left(x^{t_{z}}_{k_{z}}\right)\right)\cdot y^{\sigma d + \e_{y}(\lambda_{1}+
        \lambda_{2})},
    \end{align*}
    \endgroup
We then have
\begin{align*}
        \phi(w)^{-1} &=_{G} y^{-(\sigma d + \e_{y}(\lambda_{1}+\lambda_{2}))}\left(\Phi_{(t_{1}+t_{2} + \dots + t_{z-1})d}\left(\phi_{F}\left(x^{t_{z}}_{k_{z}}\right)\right)\right)^{-1}\dots \left(\phi_{F}\left(x^{t_{1}}_{k_{1}}\right)\right)^{-1} \\
        &=_{G} \Phi_{-(t_{z}d+\e_{y}\lambda_{1})}\left(\phi_{F}\left(x^{t_{z}}_{k_{z}}\right)^{-1}\right)\dots \Phi_{-(\sigma d + \e_{y}\lambda_{1})}\left(\phi_{F}\left(x^{t_{1}}_{k_{1}}\right)^{-1}\right)\cdot y^{-(\sigma d + \e_{y}(\lambda_{1}+\lambda_{2}))},
    \end{align*}
    noting that $y^{-\e_{y}\lambda_{2}}$ is central. Therefore
    \begin{align*}
        \phi(w)^{-1}uw &=_{G} \Phi_{-(t_{z}d + \e_{y}\lambda_{1})}\left(\phi_{F}\left(x^{t_{z}}_{k_{z}}\right)^{-1}\right)\dots \Phi_{-(\sigma d + \e_{y}\lambda_{1})}\left(\phi_{F}\left(x^{t_{1}}_{k_{1}}\right)^{-1}\right) y^{-(\sigma d + \e_{y}(\lambda_{1}+\lambda_{2}))}\\
        & \cdot x^{p_{1}}_{i_{1}}x^{p_{2}}_{i_{2}}\dots x^{p_{q}}_{i_{q}}y^{\alpha_{1}}y^{\alpha_{2}}\cdot x^{t_{1}}_{k_{1}}x^{t_{2}}_{k_{2}}\dots x^{t_{z}}_{k_{z}}y^{\lambda_{1}}y^{\lambda_{2}}.
    \end{align*}
For notation, let $\gamma = -(\sigma d + \e_{y}\lambda_{1})$ and $\delta = -(\gamma + \alpha_{1} + \lambda_{1})$. We first move all $y$ powers to the right hand side, which gives us
\begin{equation}\label{eqn:non len p general form}
\begin{split}
    \phi(w)^{-1}uw &=_{G} \Phi_{-(t_{z}d + \e_{y}\lambda_{1})}\left(\phi_{F}\left(x^{t_{z}}_{k_{z}}\right)^{-1}\right)\dots \Phi_{\gamma}\left(\phi_{F}\left(x^{t_{1}}_{k_{1}}\right)^{-1}\right) \\
    &\cdot \Phi_{\gamma}\left(x^{p_{1}}_{i_{1}}\right)\dots \Phi_{\gamma}\left(x^{p_{q}}_{i_{q}}\right)\Phi_{\gamma+\alpha_{1}}\left(x^{t_{1}}_{k_{1}}\right)\dots \Phi_{\gamma+\alpha_{1}}\left(x^{t_{z}}_{k_{z}}\right)y^{-\delta}y^{\alpha_{2}+\lambda_{2}(1-\e_{y})}.
\end{split}
    \end{equation}
We first consider the $y$ exponent of $\phi(w)^{-1}uw$. Since $-\delta = \alpha_{1}+\lambda_{1}(1-\e_{y}) - \sigma d$, we can determine the value of $-\delta$ in the subcase where $d = 0 \Mod{n}$. Indeed if this is the case, we can rewrite $\phi(w)^{-1}uw =_{G} (v_{1}, v_{2}) \in G_{\mathrm{mod}}$ in our modular normal form, where our $y$ exponent of $v_{1}$ equals $\alpha_{1}+\lambda_{1}(1-\e_{y}) \Mod{n}$. This allows us to immediately rule out some cases of twisted conjugate elements when $d=0 \Mod{n}$.  

\begin{cor}\label{cor:len p compare y}
    Let $u = \left(x^{p_{1}}_{i_{1}}\dots x^{p_{q}}_{i_{q}}y^{\alpha_{1}}, y^{\alpha_{2}}\right), v = \left(x^{r_{1}}_{j_{1}}\dots x^{r_{s}}_{j_{s}}y^{\beta_{1}}, y^{\beta_{2}}\right) \in G_{\mathrm{mod}}$ be modular normal forms. Let $\phi \in \mathrm{Out}(G(m))$ be defined as in \cref{even:all auto forms}, where $d=0 \Mod{n}$. 
    \begin{enumerate}
        \item[(i)] Suppose $\e_{y} = 1$. If $\alpha_{1} \neq \beta_{1}$ or $\alpha_{2} \neq \beta_{2}$, then $u \not \sim_{\phi} v$.
        \item[(ii)] Suppose $\e_{y} = -1$. If $\beta_{1} \neq \alpha_{1} + 2\lambda \Mod{n}$ for all $\lambda \in \Z /n\Z$, then $u \not \sim_{\phi} v$.
    \end{enumerate}
\end{cor}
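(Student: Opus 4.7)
The strategy is to compare the total $y$-exponent of both sides of $v =_G \phi(w)^{-1} u w$, using the fact that the abelianization map $G(m) \to \Z$ sending each $x_i \mapsto 0$ and $y \mapsto 1$ is a homomorphism, so the integer $y$-exponent is a well-defined invariant of each group element. From Equation~(\ref{eqn:non len p general form}), the total $y$-exponent of $\phi(w)^{-1}uw$ equals $\alpha_1 + \alpha_2 + (\lambda_1+\lambda_2)(1-\e_y) - \sigma d$, where $\sigma = \sum_{i=1}^{z} t_i$ is the exponent sum of the free component of $w$; this must equal the $y$-exponent $\beta_1 + \beta_2$ of $v$ whenever $u \sim_\phi v$.

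For part (i), setting $\e_y = 1$ reduces the relation to $\alpha_1 + \alpha_2 - \sigma d = \beta_1 + \beta_2$. Since $d \equiv 0\Mod{n}$, the term $\sigma d$ is a multiple of $n$; together with $\alpha_2, \beta_2 \in n\Z$ inherited from the modular normal form, reducing modulo $n$ yields $\alpha_1 \equiv \beta_1\Mod{n}$, and as $\alpha_1, \beta_1 \in \{0, \dots, n-1\}$ this forces $\alpha_1 = \beta_1$. Matching the remaining multiples of $n$ in the Garside components then forces $\alpha_2 = \beta_2$, so the contrapositive yields the claim. For part (ii), substituting $\e_y = -1$ preserves the term $2(\lambda_1+\lambda_2)$, giving $\alpha_1 + \alpha_2 + 2(\lambda_1+\lambda_2) - \sigma d = \beta_1 + \beta_2$. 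Reducing modulo $n$ and writing $\lambda \equiv \lambda_1+\lambda_2\Mod{n}$ produces $\beta_1 \equiv \alpha_1 + 2\lambda\Mod{n}$; if this congruence fails for every $\lambda \in \Z/n\Z$, no conjugator $w$ can satisfy the $y$-exponent constraint, so $u \not\sim_\phi v$.

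Both parts rest on a single observation: under $d \equiv 0\Mod{n}$ the perturbation $\sigma d$ vanishes modulo $n$, so the $y$-exponent modulo $n$ becomes a $\phi$-twisted conjugacy invariant (up to an additive $2(\lambda_1+\lambda_2)$ in the $\e_y = -1$ case). The most delicate step is the Garside-power portion of part (i): one must combine the integer equation $\alpha_2 - \beta_2 = \sigma d$ with the uniqueness of the modular normal form decomposition from \cref{defn:mod normal form} to rule out $\alpha_2 \neq \beta_2$; beyond this, no machinery is needed apart from Equation~(\ref{eqn:non len p general form}) and the fact that $\Phi_{sd}$ acts trivially when $s d \in n\Z$.
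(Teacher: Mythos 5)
Your overall strategy is the same as the paper's: the corollary is justified there by reading off the $y$-exponent of $\phi(w)^{-1}uw$ from \cref{eqn:non len p general form}, exactly as you do, and your derivations of $\alpha_{1}=\beta_{1}$ in part (i) and of the congruence $\beta_{1}\equiv \alpha_{1}+2\lambda \Mod{n}$ in part (ii) are correct (the abelianization observation is a clean way to see that the total $y$-exponent is well defined).

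The step you yourself flag as most delicate is, however, a genuine gap, and it cannot be closed as written. Once $\alpha_{1}=\beta_{1}$ is known, the integer equation remaining is $\alpha_{2}-\beta_{2}=\sigma d$, where $\sigma$ is the exponent sum of the free component of the conjugator $w$. Nothing constrains $\sigma$: it ranges over all of $\Z$ as $w$ varies, and the uniqueness of the modular normal form decomposition is irrelevant here --- it says that $\beta_{2}$ is determined by the group element $v$, not that $\sigma d=0$. Both sides of $\alpha_{2}-\beta_{2}=\sigma d$ are already multiples of $n$, so there is nothing further to ``match''; concluding $\alpha_{2}=\beta_{2}$ requires $\sigma d=0$, which is not given. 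Concretely, take $n=3$, $d=3$, $\e_{x}=\e_{y}=1$, $u=(x_{0},1_{G})$ and $w=x_{0}$; then $\phi(w)^{-1}uw =_{G} y^{-3}x_{0}^{-1}x_{0}x_{0} =_{G} y^{-3}x_{0} =_{G} x_{0}y^{-3} = (x_{0}, y^{-3})$ by \cref{lem:relation x y}, so $u\sim_{\phi}v$ with $\alpha_{2}=0\neq -3=\beta_{2}$. Thus the Garside clause of part (i) is only valid under the stronger hypothesis $d=0$ (in which case $\sigma d=0$ and your argument does go through). You should not feel you missed a trick here: the paper's own justification (the paragraph preceding the corollary) establishes only the invariance of the quotient exponent $\alpha_{1}\Mod{n}$ and never addresses the Garside power, so the difficulty lies with the statement of the clause $\alpha_{2}\neq\beta_{2}$ rather than with your argument for the rest.
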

Returning to the more general case, we set up some notation for working with \cref{eqn:non len p general form}. We let
\begin{equation}\label{eqn:a1 a2 a3}
\begin{split}
    a_{1} &= \Phi_{-(t_{z}d + \e_{y}\lambda_{1})}\left(\phi_{F}\left(x^{t_{z}}_{k_{z}}\right)^{-1}\right)\dots \Phi_{\gamma}\left(\phi_{F}\left(x^{t_{1}}_{k_{1}}\right)^{-1}\right), \\
    a_{2} &= \Phi_{\gamma}\left(x^{p_{1}}_{i_{1}}\dots x^{p_{q}}_{i_{q}}\right), \\
    a_{3} &= \Phi_{\gamma+\alpha_{1}}\left(x^{t_{1}}_{k_{1}} \dots x^{t_{z}}_{k_{z}}\right).
\end{split}
\end{equation}    
Here $a_{2}$ and $a_{3}$ are geodesic since $\Phi$ is a homomorphism, however it is not clear whether $a_{1}$ is also geodesic. We prove this is indeed true. 
\begin{lemma}\label{lem:preserve reduction}
    Let $w = x^{t_{1}}_{k_{1}}\dots x^{t_{z}}_{k_{z}} \in X_{n}^{\ast}$, where $t_{i} \in \{\pm 1\}, k_{i} \in \{0,\dots, n-1\}$ $(1 \leq i \leq z)$. Then $w$ is geodesic if and only if the word
    \[ w' = \phi_{F}\left(x^{t_{1}}_{k_{1}}\right)\Phi_{t_{1}d}\left(\phi_{F}\left(x^{t_{2}}_{k_{2}}\right)\right)\dots \Phi_{(t_{1}+\dots + t_{z-1})d}\left(\phi_{F}\left(x^{t_{z}}_{k_{z}}\right)\right) 
    \]
    is geodesic. 
\end{lemma}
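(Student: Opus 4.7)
The plan is to use the fact that $F_n$ is a free group, so a word over $X_n$ is geodesic if and only if it is freely reduced, i.e.\ no adjacent pair of letters forms a trivial product $x_j x_j^{-1}$ or $x_j^{-1} x_j$. By \cref{defn:phi map free comp}, each factor $\Phi_{(t_1 + \cdots + t_{i-1})d}(\phi_F(x_{k_i}^{t_i}))$ is a single letter of $X_n^{\pm 1}$, so $w'$ has exactly $z$ letters, matching the length of $w$. The equivalence of being geodesic therefore reduces to verifying, position by position, that the $i$-th and $(i{+}1)$-th letters of $w$ cancel if and only if the $i$-th and $(i{+}1)$-th letters of $w'$ cancel.

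I would proceed by case analysis on $(t_i, t_{i+1}) \in \{\pm 1\}^2$. When $t_i = t_{i+1}$, both letters of $w$ have the same sign, so $w$ is automatically freely reduced at position $i$; likewise, by \cref{defn:phi map free comp} the corresponding letters of $w'$ both carry sign $\e_x$ or both carry sign $-\e_x$, so they cannot cancel either. In the mixed case $t_i = 1$, $t_{i+1} = -1$, I would set $s = (t_1 + \cdots + t_{i-1})d$ and compute directly
\[ \Phi_s(\phi_F(x_{k_i})) = x^{\e_x}_{[\e_y k_i - s]}, \qquad \Phi_{s+d}(\phi_F(x^{-1}_{k_{i+1}})) = x^{-\e_x}_{[\e_y k_{i+1} + d - (s+d)]} = x^{-\e_x}_{[\e_y k_{i+1} - s]}, \]
from which cancellation in $w'$ occurs iff $[\e_y k_i - s] = [\e_y k_{i+1} - s]$, iff $k_i = k_{i+1}$, iff $w$ cancels at position $i$. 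The remaining case $t_i = -1$, $t_{i+1} = 1$ is symmetric, using the formula for $\phi_F$ on inverses and an analogous telescoping of the $d$ contributions.

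The main obstacle is simply keeping the modular arithmetic straight across the four sign cases, in particular making sure that the extra $d$ appearing in $\phi_F(x_i^{-1}) = x^{-\e_x}_{[\e_y i + d]}$ always cancels against a compensating shift in the $\Phi$ index when the product is expanded. Once the position-wise equivalence is established, the lemma follows immediately: $w$ is freely reduced iff every adjacent pair is reduced iff every adjacent pair in $w'$ is reduced iff $w'$ is freely reduced. A conceptually cleaner phrasing, which I might use in the writeup, is to invoke \cref{prop:non len p 1}$(i)$ once to rewrite $\Phi_{td}(\phi_F(\cdot)) = \phi_F(\Phi_{\e_y t d}(\cdot))$, reducing the question of cancellation between the $i$-th and $(i{+}1)$-th letters of $w'$ to a question about $\phi_F$ applied to two adjacent letters of a shifted word, where the injectivity-style behaviour of $\phi_F$ on pairs $(x_j, x_j^{-1})$ can be read off directly from \cref{defn:phi map free comp}.
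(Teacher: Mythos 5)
Your proposal is correct and follows essentially the same route as the paper: reduce geodesicity in $F_{n}$ to the absence of adjacent cancelling pairs, observe that each factor of $w'$ is a single letter, and then do the sign case analysis where the extra $d$ from $\phi_{F}(x_{i}^{-1})$ telescopes against the shift in the $\Phi$ index, yielding cancellation in $w'$ at position $i$ iff $k_{i}=k_{i+1}$ iff cancellation in $w$. The paper phrases the forward direction as a contradiction and dismisses the converse as symmetric, whereas your position-by-position biconditional handles both directions at once, but the computation is identical.
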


\begin{proof}
    For the forward direction, suppose there exists $1 \leq i \leq z-1$ such that
    \[ \Phi_{(t_{1}+\dots + t_{(i-1)})d}\left(\phi_{F}\left(x^{t_{i}}_{k_{i}}\right)\right)^{-1} = \Phi_{(t_{1}+\dots + t_{i})d}\left(\phi_{F}\left(x^{t_{(i+1)}}_{k_{(i+1)}}\right)\right).
    \]
    Since $\Phi$ is a bijection, this reduces to $\phi_{F}\left(x^{t_{i}}_{k_{i}}\right)^{-1} = \Phi_{t_{i}d}\left(\phi_{F}\left(x^{t_{(i+1)}}_{k_{(i+1)}}\right)\right)$. If $t_{i} = 1$, then $t_{(i+1)} = -1$, and our relation becomes 
    \[ x^{-\e_{x}}_{\left[\e_{y}k_{i}\right]} = x^{-\e_{x}}_{[\e_{y}k_{(i+1)}+d-d]} \; \Leftrightarrow \; k_{i} = k_{(i+1)}.
    \]
    If this holds, then the subword $x^{t_{i}}_{k_{i}}x^{t_{(i+1)}}_{k_{(i+1)}} =_{G} 1$, and so $w$ is not geodesic, which gives a contradiction. Similarly suppose $t_{i} = -1$, and $t_{(i+1)} = 1$. We have 
    \[ x^{\e_{x}}_{[\e_{y}k_{i}+d]} = x^{\e_{x}}_{[\e_{y}k_{(i+1)}+d]} \; \Leftrightarrow \; k_{i} = k_{(i+1)},
    \]
    which again leads to a contradiction. The reverse direction follows a symmetric proof.
\end{proof}
\begin{cor}
    Let $a_{1} = \Phi_{-(t_{z}d + \e_{y}\lambda_{1})}\left(\phi_{F}\left(x^{t_{z}}_{k_{z}}\right)^{-1}\right)\dots \Phi_{\gamma}\left(\phi_{F}\left(x^{t_{1}}_{k_{1}}\right)^{-1}\right),$ as defined in \cref{eqn:a1 a2 a3}. Then $a_{1}$ is geodesic.
\end{cor}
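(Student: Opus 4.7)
The plan is to reduce the corollary directly to the preceding lemma, by recognizing $a_{1}$ as (up to the outer automorphism $\Phi$) the inverse of the word $w'$ appearing there.

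First, I would unpack the shifts occurring in $a_{1}$. The $i$-th factor from the right has shift $\gamma + (t_{1} + \dots + t_{i-1})d$, and the leftmost factor has shift $-(t_{z}d + \e_{y}\lambda_{1}) = \gamma + (t_{1}+\dots+t_{z-1})d$, using $\gamma = -(\sigma d + \e_{y}\lambda_{1})$ and $\sigma = \sum_{i=1}^{z} t_{i}$. Using that $\Phi_{s}\circ \Phi_{t} = \Phi_{s+t}$ and that $\Phi_{s}$ is a homomorphism of $F_{n}$, I can factor out $\Phi_{\gamma}$ on the outside to obtain
\[
a_{1} =_{G} \Phi_{\gamma}\!\left( \Phi_{(t_{1}+\dots+t_{z-1})d}\!\left(\phi_{F}(x^{t_{z}}_{k_{z}})^{-1}\right) \cdots \Phi_{t_{1}d}\!\left(\phi_{F}(x^{t_{2}}_{k_{2}})^{-1}\right) \phi_{F}(x^{t_{1}}_{k_{1}})^{-1}\right).
\]
The expression inside $\Phi_{\gamma}$ is precisely $(w')^{-1}$, where $w' = \phi_{F}(x^{t_{1}}_{k_{1}})\Phi_{t_{1}d}(\phi_{F}(x^{t_{2}}_{k_{2}}))\dots \Phi_{(t_{1}+\dots+t_{z-1})d}(\phi_{F}(x^{t_{z}}_{k_{z}}))$ is exactly the word from the preceding lemma. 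Hence $a_{1} =_{G} \Phi_{\gamma}((w')^{-1})$.

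Next, I would invoke the fact that $\Phi_{\gamma}$ is a length-preserving automorphism of $F_{n}$ (by \cref{defn:map move y right}), and that inversion in a free group preserves the property of being geodesic. Therefore $a_{1}$ is geodesic in $F_{n}$ if and only if $w'$ is geodesic. By the preceding lemma, $w'$ is geodesic if and only if $w = x^{t_{1}}_{k_{1}}\dots x^{t_{z}}_{k_{z}}$ is geodesic. But $w$ is the free component of the quotient element of $w$ written in modular normal form, and hence is by definition a reduced word in $F_{n}$, i.e. geodesic. This yields the result.

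The proof is essentially bookkeeping rather than deep; the only mild obstacle is keeping the shift indices straight when pulling $\Phi_{\gamma}$ outside. Once one verifies that the shifts telescope correctly (i.e.\ that $\Phi_{\gamma + (t_{1}+\dots+t_{i-1})d} = \Phi_{\gamma}\circ \Phi_{(t_{1}+\dots+t_{i-1})d}$ throughout), the reduction to \cref{lem:preserve reduction} is immediate.
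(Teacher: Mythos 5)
Your proof is correct and follows the paper's approach exactly: the paper's proof is the one-liner ``Since $a_{1} = \Phi_{\gamma}\left(\left(w'\right)^{-1}\right)$ from \cref{lem:preserve reduction}, then $a_{1}$ is geodesic,'' and you have simply spelled out the telescoping of the shift indices and the observations that $\Phi_\gamma$ is a length-preserving automorphism and inversion preserves geodesicity, all of which the paper leaves implicit.
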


\begin{proof}
    Since $a_{1} = \Phi_{\gamma}\left(\left(w'\right)^{-1}\right)$ from \cref{lem:preserve reduction}, then $a_{1}$ is geodesic.
\end{proof}
We now introduce a notion of twisted cyclic reduction, similar to that of \cite{crowe_twisted_2024}. For an element in modular normal form, we consider when the length of the free component is minimal with respect to its twisted conjugacy class. Recall for any word $u = (u_{1}y^{\alpha_{1}}, y^{\alpha_{2}}) \in G_{\mathrm{mod}}$, $u_{1} \in F_{n}$ is the free component of $u$. 
\begin{defn}\label{defn: twisted CR}
    Let $u = (u_{1}y^{\alpha_{1}}, y^{\alpha_{2}}) \in G_{\mathrm{mod}}$ be a modular normal form. Suppose \\$\phi(w)^{-1}uw =_{G} (v_{1}y^{\beta_{1}}, y^{\beta_{2}}) \in G_{\mathrm{mod}}$ for some $w \in G_{\mathrm{mod}}$. We say $u$ is \emph{$\phi$-cyclically reduced} ($\phi$-CR) if for all $w \in G_{\mathrm{mod}}$, $l(v_{1}) \geq l(u_{1})$.  
\end{defn}
\begin{prop}\label{prop:nonlen p cyc reduced}
    Let $u = (u_{1}y^{\alpha_{1}}, y^{\alpha_{2}}) \in G_{\mathrm{mod}}$ be a modular normal form, where \\$u_{1} = x^{p_{1}}_{i_{1}}\dots x^{p_{q}}_{i_{q}} \in F_{n}$. Then $u$ is $\phi$-CR if and only if $u_{1}$ is not of the form
    \[ u_{1} = 
    \begin{cases}
        x^{p_{1}}_{i_{1}}\dots x^{-\e_{x}p_{1}}_{[\e_{y}i_{1}-\alpha_{1}]} & \e_{x}p_{1} = 1, \\
        x^{p_{1}}_{i_{1}}\dots x^{-\e_{x}p_{1}}_{\left[\e_{y}(i_{1}-d) - \alpha_{1}\right]} & \e_{x}p_{1} = -1.
    \end{cases} \]
\end{prop}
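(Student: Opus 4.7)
The plan is to analyze the cancellation pattern in the free component of $\phi(w)^{-1}uw$, expressed via \cref{eqn:non len p general form,eqn:a1 a2 a3}.

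First, I would reduce to the case where $w$ has no $y$-part. Writing $w = w_{F}y^{\Lambda}$ with $w_{F} \in F_{n}$ and $\Lambda \in \Z$, one observes that $\phi(w)^{-1}uw = y^{-\e_{y}\Lambda}\bigl(\phi(w_{F})^{-1}uw_{F}\bigr)y^{\Lambda}$, and conjugation by $y^{\Lambda}$ on the right acts on the free component by the length-preserving map $\Phi_{-\e_{y}\Lambda}$ (this is precisely the content of \cref{prop:y shift formula}). So the length of the free component of the output is independent of the $y$-part of $w$, and we may assume $w = x^{t_{1}}_{k_{1}}\cdots x^{t_{z}}_{k_{z}} \in F_{n}$ is geodesic with $z \geq 1$; the case $z = 0$ produces no change.

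Second, by \cref{eqn:non len p general form,eqn:a1 a2 a3}, the free component of $\phi(w)^{-1}uw$ is the freely reduced form of $a_{1}a_{2}a_{3}$, where each $a_{i}$ is individually geodesic (using \cref{lem:preserve reduction} for $a_{1}$, and the length-preservation of $\Phi$ for $a_{2},a_{3}$) with $|a_{1}| = |a_{3}| = z$ and $|a_{2}| = q$. Let $c_{1}^{*}, c_{2}^{*}$ denote the junction cancellation depths at $a_{1}|a_{2}$ and $a_{2}|a_{3}$ respectively, so that $|a_{1}a_{2}|_{\mathrm{red}} = z+q-2c_{1}^{*}$ and $|a_{2}a_{3}|_{\mathrm{red}} = q+z-2c_{2}^{*}$. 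The free-group inequality $|xy|_{\mathrm{red}} \geq |x|_{\mathrm{red}} - |y|_{\mathrm{red}}$ applied to $(a_{1}a_{2})\cdot a_{3}$ yields $l(v_{1}) \geq q - 2c_{1}^{*}$, and applied symmetrically to $a_{1}\cdot(a_{2}a_{3})$ yields $l(v_{1}) \geq q - 2c_{2}^{*}$. Hence if $u$ is not $\phi$-CR, i.e.\ $l(v_{1}) < q$ for some such $w$, then both $c_{1}^{*} \geq 1$ and $c_{2}^{*} \geq 1$.

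For the reverse direction, suppose some $w$ achieves $l(v_{1}) < q$. The condition $c_{1}^{*} \geq 1$ means the last letter of $a_{1}$ is the inverse of the first letter of $a_{2}$; applying $\Phi_{-\gamma}$ to both sides this reduces to $\phi_{F}(x^{t_{1}}_{k_{1}}) = x^{p_{1}}_{i_{1}}$. By \cref{defn:phi map free comp} this uniquely forces $t_{1} = \e_{x}p_{1}$, together with $k_{1} = [\e_{y}i_{1}]$ when $\e_{x}p_{1} = 1$, and $k_{1} = [\e_{y}(i_{1}-d)]$ when $\e_{x}p_{1} = -1$. Analogously, $c_{2}^{*} \geq 1$ says $\Phi_{\gamma}(x^{p_{q}}_{i_{q}}) = \Phi_{\gamma+\alpha_{1}}(x^{-t_{1}}_{k_{1}})$, and stripping $\Phi_{-\gamma}$ yields $x^{p_{q}}_{i_{q}} = \Phi_{\alpha_{1}}(x^{-t_{1}}_{k_{1}}) = x^{-t_{1}}_{[k_{1}-\alpha_{1}]}$. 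Substituting the forced values of $t_{1}, k_{1}$ from the previous step produces exactly the forbidden shape claimed for $u_{1}$ (with $p_{q} = -\e_{x}p_{1}$ and the prescribed index). Conversely, if $u_{1}$ has this shape, take $w = x^{t_{1}}_{k_{1}}$ with $t_{1}, k_{1}$ determined as above (so $z = 1$). By construction both $c_{1}^{*} \geq 1$ and $c_{2}^{*} \geq 1$; since $|a_{1}| = |a_{3}| = 1$ each cancellation is exactly one letter, so $l(v_{1}) = 2 + q - 2(1+1) = q-2 < q$, and $u$ fails to be $\phi$-CR.

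The main obstacle is making the cancellation accounting robust against the possibility that cancellations propagate deeply through $a_{2}$ (where naively $c_{1}^{*}$ and $c_{2}^{*}$ might interact); the inequality $|xy|_{\mathrm{red}} \geq |x|_{\mathrm{red}} - |y|_{\mathrm{red}}$ sidesteps this cleanly, since it only requires tracking the cancellation between the pieces $(a_{1}a_{2})$ and $a_{3}$, and its mirror, rather than the finer interaction between the two junctions.
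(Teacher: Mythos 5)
Your proof is correct and follows essentially the same route as the paper: express the free component of $\phi(w)^{-1}uw$ as $a_{1}a_{2}a_{3}$ via \cref{eqn:non len p general form,eqn:a1 a2 a3}, deduce that cancellation must occur at both junctions, read off the forced first and last letters of $u_{1}$ from \cref{defn:phi map free comp}, and for the converse twisted-conjugate by a single well-chosen generator. Your appeal to the free-group inequality $|xy|_{\mathrm{red}} \geq |x|_{\mathrm{red}} - |y|_{\mathrm{red}}$ is a welcome explicit justification of the "both junctions must cancel" step, which the paper merely asserts.
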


\begin{proof}
    We start with the contrapositive of the reverse direction. Suppose we twisted conjugate by an element $w = \left(x^{t_{1}}_{k_{1}}\dots x^{t_{z}}_{k_{z}}y^{\lambda_{1}}, y^{\lambda_{2}}\right) \in G_{\mathrm{mod}}$. From \cref{eqn:non len p general form}, we have
\begin{align*}
    \phi(w)^{-1}uw &=_{G} \Phi_{-(t_{z}d + \e_{y}\lambda_{1})}\left(\phi_{F}\left(x^{t_{z}}_{k_{z}}\right)^{-1}\right)\dots \Phi_{\gamma}\left(\phi_{F}\left(x^{t_{1}}_{k_{1}}\right)^{-1}\right) \\
    &\cdot \Phi_{\gamma}\left(x^{p_{1}}_{i_{1}}\right)\dots \Phi_{\gamma}\left(x^{p_{q}}_{i_{q}}\right)\Phi_{\gamma+\alpha_{1}}\left(x^{t_{1}}_{k_{1}}\right)\dots \Phi_{\gamma+\alpha_{1}}\left(x^{t_{z}}_{k_{z}}\right)y^{-\delta}y^{\alpha_{2}+\lambda_{2}(1-\e_{y})}.
\end{align*}
Let $\phi(w)^{-1}uw =_{G} (v_{1}y^{\beta_{1}}, y^{\beta_{2}}) \in G_{\mathrm{mod}}$. In particular, let $v_{1} =_{G} a_{1}a_{2}a_{3}$, where $a_{1}, a_{2}, a_{3} \in X_{n}^{\ast}$ are defined in \cref{eqn:a1 a2 a3}. Since $u$ is not $\phi$-CR, then $a_{1}a_{2}a_{3}$ must freely reduce to an element of shorter length than $u_{1}$. Recall $a_{1}, a_{2}$ and $a_{3}$ are all geodesic, since $\Phi$ is a homomorphism and by \cref{lem:preserve reduction}. Therefore there exists both of the following cancellations:
\[ \Phi_{\gamma}\left(\phi_{F}\left(x^{t_{1}}_{k_{1}}\right)\right) = \Phi_{\gamma}\left(x^{p_{1}}_{i_{1}}\right), \quad \text{and} \quad \Phi_{\gamma}\left(x^{-p_{q}}_{i_{q}}\right) = \Phi_{\gamma+\alpha_{1}}\left(x^{t_{1}}_{k_{1}}\right).
\]
Since $\Phi$ is a homomorphism, we can reduce these two relations to
\[ \phi_{F}\left(x^{t_{1}}_{k_{1}}\right) = x^{p_{1}}_{i_{1}}, \quad \text{and} \quad x^{-p_{q}}_{i_{q}} = \Phi_{\alpha_{1}}\left(x^{t_{1}}_{k_{1}}\right).
\]
When considering powers, we have $p_{1} = \e_{x}t_{1} = -\e_{x}p_{q}$, and so $p_{q} = -\e_{x}p_{1}$. If $t_{1} = 1$, then $\phi_{F}\left(x^{t_{1}}_{k_{1}}\right) = x^{\e_{x}}_{[\e_{y}k_{1}]} = x^{\e_{x}}_{i_{1}}$, and so $i_{1} = \e_{y}k_{1} \Mod{n}$. Also, $i_{q} = k_{1}-\alpha_{1} = \e_{y}i_{1}-\alpha_{1} \Mod{n}$ (recall $\e^{2}_{y} = 1$). This implies that $u_{1} = x^{p_{1}}_{i_{1}}\dots x^{-\e_{x}p_{1}}_{[\e_{y}i_{1}-\alpha_{1}]}.$ Otherwise, if $t_{1} = -1$, then $\phi_{F}\left(x^{t_{1}}_{k_{1}}\right) = x^{-\e_{x}}_{[\e_{y}k_{1}+d]} = x^{-\e_{x}}_{i_{1}}$, and so $i_{1} = \e_{y}k_{1}+d \Mod{n}$. Also, $i_{q} = k_{1}-\alpha_{1} = \e_{y}(i_{1}-d) - \alpha_{1} \Mod{n}$. This implies that $u_{1} = x^{p_{1}}_{i_{1}}\dots x^{-\e_{x}p_{1}}_{[\e_{y}(i_{1}-d) - \alpha_{1}]}.$

For the forward direction, suppose $u_{1} = x^{p_{1}}_{i_{1}}\dots x^{-\e_{x}p_{1}}_{[\e_{y}i_{1}-\alpha_{1}]}$, where $\e_{x}p_{1} = 1$. We can twisted conjugate by $w = x^{\e_{x}p_{1}}_{[\e_{y}i_{1}]}$ which gives us 
\begingroup
\addtolength{\jot}{0.5em}
\begin{align*}
    \phi(w)^{-1}uw &= \phi\left(x^{\e_{x}p_{1}}_{[\e_{y}i_{1}]}\right)^{-1}x^{p_{1}}_{i_{1}}\dots x^{-\e_{x}p_{1}}_{[\e_{y}i_{1}-\alpha_{1}]}y^{\alpha_{1}}y^{\alpha_{2}}x^{\e_{x}p_{1}}_{[\e_{y}i_{1}]} \\
    &=_{G} y^{-d}x^{-p_{1}}_{i_{1}}x^{p_{1}}_{i_{1}}\dots x^{-\e_{x}p_{1}}_{[\e_{y}i_{1}-\alpha_{1}]}x^{\e_{x}p_{1}}_{[\e_{y}i_{1}-\alpha_{1}]}y^{\alpha_{1}}y^{\alpha_{2}}\\
    &=_{G} y^{-d}x^{p_{2}}_{i_{2}}\dots x^{p_{q-1}}_{i_{q-1}}y^{\alpha_{1}+\alpha_{2}} \\
    &=_{G} \Phi_{-d}\left(x^{p_{2}}_{i_{2}}\dots x^{p_{q-1}}_{i_{q-1}}\right)y^{\alpha_{1}+\alpha_{2}-d},
\end{align*}
\endgroup
which has a shorter free component as required. A similar proof holds in the case where $u_{1} = x^{p_{1}}_{i_{1}}\dots x^{-\e_{x}p_{1}}_{[\e_{y}(i_{1}-d) - \alpha_{1}]}$, where $\e_{x}p_{1} = -1$. 
\end{proof}
We now define an equivalent notion of \cref{defn: len p y shift}, in terms of moving letters from the free component only. Extra care must be taken to track changes to the $y$ power of an element, to ensure we remain within the same twisted conjugacy class.

Suppose we have a word $u = (u_{1}y^{\alpha_{1}}, y^{\alpha_{2}}) \in G_{\mathrm{mod}}$ which is $\phi$-CR. Suppose $u_{1} = u_{11}u_{12} \in F_{n}$, where $u_{11}, u_{12} \in F_{n}$ are (possible empty) subwords of the free component of $u$. First, suppose we shift the subword $u_{12}$ from the back to the front of the free component. To do this, we apply the relation $\phi\left(\Phi_{-\alpha_{1}}\left(u_{12}\right)\right)\cdot u \cdot \Phi_{-\alpha_{1}}\left(u^{-1}_{12}\right)$, in order to preserve the twisted conjugacy class. This gives us
\begingroup
\addtolength{\jot}{0.5em}
\begin{align*}
    \phi(\Phi_{-\alpha_{1}}(u_{12}))\cdot u \cdot \Phi_{-\alpha_{1}}\left(u^{-1}_{12}\right) &= \phi(\Phi_{-\alpha_{1}}(u_{12}))u_{11}u_{12}y^{\alpha_{1}}y^{\alpha_{2}}\Phi_{-\alpha_{1}}\left(u^{-1}_{12}\right) \\
    &=_{G} \phi(\Phi_{-\alpha_{1}}(u_{12}))u_{11}u_{12}u^{-1}_{12}y^{\alpha_{1}}y^{\alpha_{2}} \\
    &=_{G} \phi(\Phi_{-\alpha_{1}}(u_{12}))u_{11}y^{\alpha_{1}}y^{\alpha_{2}}.
\end{align*}
\endgroup
Now suppose we shift the subword $u_{11}$ from the front to the back of the free component. Similarly we apply the relation $u^{-1}_{11}\cdot u \cdot \phi^{-1}(u_{11}),$ in order to preserve the twisted conjugacy class. This leaves us with
\[ u^{-1}_{11}\cdot u \cdot \phi^{-1}(u_{11}) = u^{-1}_{11}u_{11}u_{12}y^{\alpha_{1}}y^{\alpha_{2}}\phi^{-1}(u_{11}) =_{G} u_{12}y^{\alpha_{1}}\phi^{-1}(u_{11})y^{\alpha_{2}}.
\]

\begin{defn}\label{defn:x shift}
    Let $u = (u_{11}u_{12}y^{\alpha_{1}}, y^{\alpha_{2}}) \in G_{\mathrm{mod}}$ be $\phi$-CR. We say an element $v \in G_{\mathrm{mod}}$ is related to $u$ by an \emph{$x$-shift} if $v$ is equivalent to either
    \[ v =_{G} \phi\left(\Phi_{-\alpha_{1}}\left(u_{12}\right)\right)u_{11}y^{\alpha_{1}}y^{\alpha_{2}}, \quad \text{or} \quad v =_{G} u_{12}y^{\alpha_{1}}\phi^{-1}\left(u_{11}\right)y^{\alpha_{2}}.
    \]
    We denote an $x$-shift by $u \xmapsto{x} v$, and note that if such a $x$-shift exists between $u$ and $v$, then $u \sim_{\phi} v$ by the discussion above.  
\end{defn}
\begin{rmk}
    It is worth emphasising that $v$ is \emph{equivalent} to these elements. In particular, after applying an $x$-shift, the $y$ exponent may have changed after moving $y$ terms from left to right. In general, $x$-shifts cannot be reversed, unlike $y$-shifts, as defined in \cref{defn: len p y shift}, which can be reversed.
\end{rmk}
We provide an example to demonstrate how $x$-shifts and $y$-shifts work in practice. 

\begin{exmp}\label{exmp:non len p algorithm}
    We recall \cref{exmp:first}, and let $n=3$, i.e. $F_{n} = \langle x_{0}, x_{1}, x_{2} \rangle$. Let $\phi \colon x_{0} \mapsto x_{0}y^{4}, \; y \mapsto y$, i.e. $\e_{x} = \e_{y} =1$ and $d = 4$. We consider the geodesic $u = x_{0}x^{-1}_{2}y^{2} =_{G} \left(x_{0}x^{-1}_{2}y^{2}, 1_{G}\right) \in G_{\mathrm{mod}}$. We note that since $\e_{y} = 1$, no $y$-shift will change the $y$ exponent of $u$. For example, we could apply the following $y$-shift to $u$:
    \[ u =_{G} \left(x_{0}x^{-1}_{2}y^{2}, 1_{G}\right) \xleftrightarrow{y} \left(\Phi_{-1}\left(x_{0}x^{-1}_{2}\right)y^{2}, 1_{G}\right) = \left(x_{1}x^{-1}_{0}y^{2}, 1_{G}\right).
    \]
    Alternatively, we could apply an $x$-shift to $u$ as follows, by shifting the subword $x^{-1}_{2}$ from the back to the front of the free component.
    \begin{align*}
        u =_{G} \left(x_{0}x^{-1}_{2}y^{2}, 1_{G}\right) &\xmapsto{x} \left(\phi\left(\Phi_{-2}\left(x^{-1}_{2}\right)\right)x_{0}y^{2}, 1_{G}\right) \\
        &= \left(\phi\left(x^{-1}_{1}\right)x_{0}y^{2}, 1_{G}\right) \\
        &=_{G} \left(x^{-1}_{2}y^{-4}x_{0}y^{2}, 1_{G}\right) \\
        &=_{G} \left(x^{-1}_{2}y^{-1}x_{0}y^{2}, y^{-3}\right)\\
        &=_{G} \left(x^{-1}_{2}x_{1}y, y^{-3}\right).
    \end{align*}
\end{exmp}
With these definitions of $x$-shifts and $y$-shifts, we can show that a finite sequence of shifts exists between any two $\phi$-CR elements which are twisted conjugate. 

\begin{thm}\label{thm:non len p main result}
    Let $u = (u_{1}y^{\alpha_{1}}, y^{\alpha_{2}}), v = (v_{1}y^{\beta_{1}}, y^{\beta_{2}}) \in G_{\mathrm{mod}}$ be modular normal forms which are $\phi$-CR. Then $u \sim_{\phi} v$ if and only if $l\left(u_{1}\right) = l\left(v_{1}\right)$, and $u$ and $v$ are related by a finite sequence of $x$-shifts and $y$-shifts.
\end{thm}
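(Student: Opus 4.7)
Plan.

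I will prove the two directions of the biconditional separately; the reverse direction is a routine verification, while the forward direction requires a peak-reduction style argument over the conjugator.

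For the reverse direction, I check that both shift operations preserve the free-component length and the twisted conjugacy class. For a $y$-shift, twisted conjugacy is exactly \cref{prop:y shift formula}, and length preservation follows because $\Phi_{-\varepsilon_y\lambda}$ is a length-preserving bijection of $F_n$ (\cref{defn:map move y right}). For an $x$-shift, twisted conjugacy is built into \cref{defn:x shift}. For length preservation when $u$ is $\phi$-CR, I rewrite the result of the $x$-shift in modular normal form using \cref{lem:relation x y} together with parts (i) and (iv) of \cref{prop:non-len p results 2}: in the first case of \cref{defn:x shift} the new free component reads $[\phi(\Phi_{-\alpha_1}(u_{12}))]_F \cdot \Phi_{\sigma_{12}d}(u_{11})$, where $\sigma_{12}$ is the exponent sum of $u_{12}$. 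By \cref{lem:preserve reduction} each factor is geodesic, and any cancellation at the juncture would exhibit a twisted conjugate of $u$ of strictly shorter free component, contradicting \cref{prop:nonlen p cyc reduced}. A symmetric argument handles the second form of $x$-shift.

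For the forward direction, fix any conjugator $w \in G_{\mathrm{mod}}$ realising $v =_G \phi(w)^{-1}uw$, and split $w = w_F y^{\mu}$ with $w_F \in F_n$ and $\mu \in \Z$. By \cref{prop:y shift formula} the conjugation by $y^{\mu}$ is a single $y$-shift, so it suffices to assume $w = w_F \in F_n$. I induct on $l(w_F)$. The base $l(w_F)=0$ is trivial. For the inductive step, write $w_F = g \cdot w'$ with $g \in X_n^{\pm 1}$ a single free generator, and set $u^{\ast} = \phi(g)^{-1} u g$, so that $v =_G \phi(w')^{-1} u^{\ast} w'$. A direct computation with \cref{lem:relation x y} shows $l(u_1) \leq l(u^{\ast}_1) \leq l(u_1) + 2$, where the lower bound uses that $u$ is $\phi$-CR. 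If $l(u^{\ast}_1) = l(u_1)$, then $u^{\ast}$ is itself $\phi$-CR and arises from $u$ by the single-letter special case of \cref{defn:x shift} (namely taking $u_{11}$ or $u_{12}$ to be a single generator), and the inductive hypothesis applied to $(u^{\ast},v)$ delivers the remaining shifts.

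The main obstacle is handling the case $l(u^{\ast}_1) > l(u_1)$, where the intermediate element $u^{\ast}$ is not $\phi$-CR. Here I argue by peak reduction on the chain $u = u_0 \to u_1 \to \cdots \to u_k = v$ of single-generator conjugations determined by $w_F$. Since $v$ is $\phi$-CR with $l(v_1) = l(u_1)$, every growth step $l((u_i)_1) > l((u_{i-1})_1)$ along the chain must be matched by a later shrink step. Using the semidirect structure — specifically the commutation relations between $x_i$'s and $y$'s from \cref{lem:relation x y}, together with the characterisation of the $\phi$-CR condition in \cref{prop:nonlen p cyc reduced} — one shows that a matching shrink step can be commuted back through the intervening letters of $w_F$ until it is adjacent to the offending growth step, at which point the two letters cancel (in $F_n$, after accounting for the $\Phi_{\bullet}$ twists). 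Iterating reduces $l(w_F)$, completing the induction. Once we are on a chain along which all intermediate elements are $\phi$-CR of the common minimal length, each step is an atomic $x$-shift or $y$-shift, and their composition is the desired finite sequence.
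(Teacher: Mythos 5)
Your reverse direction is fine, and your reduction to a conjugator $w_{F} \in F_{n}$ (peeling off the $y$-power as a single $y$-shift) together with induction on $l(w_{F})$ is a reasonable skeleton. One small imprecision: a length-preserving conjugation by a single free generator is an $x$-shift \emph{composed with a $y$-shift} — the $\Phi$-twist of the free component and the resulting $y$-exponent do not match the two forms in \cref{defn:x shift} on the nose — but that is harmless, since the theorem allows arbitrary finite sequences of both kinds of shift.

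The genuine gap is your treatment of the case $l(u^{\ast}_{1}) > l(u_{1})$, which is where the entire difficulty of the forward direction lives. You may, and should, take $w_{F}$ freely reduced in $F_{n}$; but then the letter-by-letter chain is completely determined by $w_{F}$, and the mechanism you propose — commuting a later shrink step back past the intervening letters until it is adjacent to the growth step, "at which point the two letters cancel" — cannot be carried out. Letters of $F_{n}$ do not commute, so reordering them changes the element $w_{F}$ and hence the target $v$; and a freely reduced $w_{F}$ contains no cancelling pair of letters, so the step "iterating reduces $l(w_{F})$" never fires. What actually has to be proved is that for a freely reduced conjugator between two $\phi$-CR elements of the same minimal free length, growth steps do not occur at all: the cancellation pattern in $\phi(w_{F})^{-1}uw_{F}$ is forced either to cancel the prepended letters against $u_{1}$ from one end while appending on the other, or, when $l(w_{F}) > l(u_{1})$, to wrap around so that appended letters cancel against prepended ones. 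This is precisely what the paper's proof does: it writes $v_{1} =_{G} a_{1}a_{2}a_{3}$ with $a_{1},a_{2},a_{3}$ the geodesic twisted contributions of $\phi(w)^{-1}$, $u_{1}$ and $w$, counts the forced cancellations, and splits into the case where $a_{2}$ survives (yielding directly an $x$-shift followed by a $y$-shift via \cref{lemma:claims for main proof}) and the case where $a_{2}$ fully cancels (handled by explicit computation and a reverse induction on the length of $w$). Your sketch correctly identifies this as "the main obstacle" but then asserts rather than proves it, and the specific mechanism asserted is not available.
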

The proof strategy is similar to that of \cite[Theorem 3.13]{crowe_conjugacy_2023} and \cite{bardakov_conjugacy_2005}. We require one further algebraic result before proving this theorem. 
\begin{lemma}\label{lemma:claims for main proof}
    Let $a_{1}, a_{3} \in F_{n}$ be defined from \cref{eqn:a1 a2 a3}, i.e.
    \[ a_{1} = \Phi_{-(t_{z}d + \e_{y}\lambda_{1})}\left(\phi_{F}\left(x^{t_{z}}_{k_{z}}\right)^{-1}\right)\dots \Phi_{\gamma}\left(\phi_{F}\left(x^{t_{1}}_{k_{1}}\right)^{-1}\right), \quad
    a_{3} = \Phi_{\gamma+\alpha_{1}}\left(x^{t_{1}}_{k_{1}} \dots x^{t_{z}}_{k_{z}}\right).
    \]
    Then $\phi(\Phi_{\delta}(a_{3})) =_{G} \Phi_{\sigma d}\left(a^{-1}_{1}\right)y^{\sigma d}$, where $\sigma = \sum_{i=1}^{z} t_{i}$. Moreover, $a_{1} =_{G} \Phi_{\e_{y}\delta}\left(\left[\phi\left(a^{-1}_{3}\right)\right]_{F}\right)$.
\end{lemma}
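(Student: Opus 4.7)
Both identities are statements about the word $w = x^{t_{1}}_{k_{1}}\dots x^{t_{z}}_{k_{z}}$ that sits inside the definitions of $a_{1}$ and $a_{3}$, so the strategy is to rewrite $\Phi_{\delta}(a_{3})$ in terms of $w$ and then pipe the result through the calculus assembled in \cref{prop:non-len p results 2}. The key bookkeeping fact is that $\delta + \gamma + \alpha_{1} = -\lambda_{1}$, which is immediate from the definition $\delta = -(\gamma + \alpha_{1} + \lambda_{1})$.

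For the first identity, since $a_{3} = \Phi_{\gamma+\alpha_{1}}(w)$ I have $\Phi_{\delta}(a_{3}) = \Phi_{-\lambda_{1}}(w)$. Applying $\phi$ to a purely free word and collecting $y$-powers on the right yields
\[ \phi(\Phi_{-\lambda_{1}}(w)) =_{G} [\phi(\Phi_{-\lambda_{1}}(w))]_{F}\cdot y^{\sigma d},
\]
using the fact that $\Phi$ preserves exponent sums and $\sigma$ is the exponent sum of $w$. I then invoke $(iv)$ of \cref{prop:non-len p results 2} with $s = -\e_{y}\lambda_{1}$ to get $[\phi(\Phi_{-\lambda_{1}}(w))]_{F} = \Phi_{-\e_{y}\lambda_{1}}([\phi(w)]_{F})$. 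It remains to verify $\Phi_{-\e_{y}\lambda_{1}}([\phi(w)]_{F}) =_{G} \Phi_{\sigma d}(a_{1}^{-1})$: expanding both sides using $(i)$ of \cref{prop:non-len p results 2} and the definition of $a_{1}$, this becomes a termwise match where each factor $\Phi_{(t_{1}+\dots +t_{i-1})d - \e_{y}\lambda_{1}}(\phi_{F}(x^{t_{i}}_{k_{i}}))$ appears on both sides, the shifts lining up via the identity $\sigma d + \gamma = -\e_{y}\lambda_{1}$.

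For the second identity, a direct chain of applications of \cref{prop:non-len p results 2} suffices. Since the exponent sum of $a_{3}$ is $\sigma$, part $(v)$ gives
\[ [\phi(a_{3}^{-1})]_{F} = [\phi(a_{3})^{-1}]_{F} =_{G} \Phi_{-\sigma d}\left(\left([\phi(a_{3})]_{F}\right)^{-1}\right),
\]
while $(iv)$ rewrites $[\phi(a_{3})]_{F} = \Phi_{\e_{y}(\gamma+\alpha_{1})}([\phi(w)]_{F})$. Composing and using $\delta + \gamma + \alpha_{1} = -\lambda_{1}$ collapses the shifts to
\[ \Phi_{\e_{y}\delta}\left([\phi(a_{3}^{-1})]_{F}\right) =_{G} \Phi_{\e_{y}(\delta+\gamma+\alpha_{1}) - \sigma d}\left(\left([\phi(w)]_{F}\right)^{-1}\right) = \Phi_{-\e_{y}\lambda_{1} - \sigma d}\left(\left([\phi(w)]_{F}\right)^{-1}\right) = \Phi_{\gamma}\left(\left([\phi(w)]_{F}\right)^{-1}\right),
\]
and the first identity (rearranged as $a_{1} = \Phi_{\gamma}(([\phi(w)]_{F})^{-1})$) closes the argument.

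The only genuine obstacle is the arithmetic bookkeeping: keeping straight the various additive relations between $\gamma, \delta, \alpha_{1}, \lambda_{1}$ and $\sigma d$ as they are pushed through nested applications of $\Phi_{s}$. Because $\Phi_{s}\circ\Phi_{t} = \Phi_{s+t}$ and $\Phi_{s}$ is a homomorphism on $F_{n}$ commuting with inversion, each of the simplifications is mechanical once the correct substitutions are performed; no new ideas beyond \cref{prop:non-len p results 2} are needed.
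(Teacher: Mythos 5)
Your proof is correct and follows essentially the same route as the paper: both arguments reduce everything to the underlying free word $w = x^{t_{1}}_{k_{1}}\dots x^{t_{z}}_{k_{z}}$ via the identity $\delta + \gamma + \alpha_{1} = -\lambda_{1}$, establish $\phi(\Phi_{\delta}(a_{3})) =_{G} \Phi_{\sigma d}\left(a_{1}^{-1}\right)y^{\sigma d}$ by matching shifts termwise (your use of part $(iv)$ of \cref{prop:non-len p results 2} wholesale versus the paper's termwise use of \cref{prop:non len p 1}$(i)$ is an immaterial difference), and then obtain the second identity by combining parts $(iv)$ and $(v)$. The bookkeeping in your displayed computations checks out.
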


\begin{proof}
    Recall $\gamma = -(\sigma d + \e_{y}\lambda_{1})$ and $\delta = -(\gamma + \alpha_{1} + \lambda_{1})$, and so $\delta + \gamma + \alpha_{1} = -\lambda_{1}$. Then
    \begingroup
\addtolength{\jot}{0.5em}
\begin{align*}
    \phi(\Phi_{\delta}(a_{3})) &= \phi\left(\Phi_{-\lambda_{1}}\left(x^{t_{1}}_{k_{1}}\dots x^{t_{z}}_{k_{z}}\right)\right) \\
    &= \phi\left(x^{t_{1}}_{[k_{1}+\lambda_{1}]}\dots x^{t_{z}}_{[k_{z}+\lambda_{1}]}\right) \\
    &=_{G} \phi_{F}\left(x^{t_{1}}_{[k_{1}+\lambda_{1}]}\right)\Phi_{t_{1}d}\left(\phi_{F}\left(x^{t_{2}}_{[k_{2}+\lambda_{1}]}\right)\right)\dots \Phi_{(t_{1}+\dots + t_{z -1})d}\left(\phi_{F}\left(x^{t_{z}}_{[k_{z}+\lambda_{1}]}\right)\right)y^{\sigma d} \\
    &= \Phi_{\sigma d}\left(\Phi_{-\sigma d}\left(\phi_{F}\left(x^{t_{1}}_{[k_{1}+\lambda_{1}]}\right)\right)\dots \Phi_{-t_{z}d}\left(\phi_{F}\left(x^{t_{z}}_{[k_{z}+\lambda_{1}]}\right)\right)\right)y^{\sigma d}.
\end{align*}
\endgroup
By $(i)$ of \cref{prop:non len p 1}, we have $\phi_{F}\left(x^{t_{i}}_{[k_{i}+\lambda_{1}]}\right) = \phi_{F}\left(\Phi_{-\lambda_{1}}\left(x^{t_{i}}_{k_{i}}\right)\right) = \Phi_{-\e_{y}\lambda_{1}}\left(\phi_{F}\left(x^{t_{i}}_{k_{i}}\right)\right)$, for all $0 \leq i \leq z$. Hence
\[  \phi(\Phi_{\delta}(a_{3})) =_{G} \Phi_{\sigma d}\left(\Phi_{-(\sigma d + \e_{y}\lambda_{1})}\left(\phi_{F}\left(x^{t_{1}}_{k_{1}}\right)\right)\dots \Phi_{-(t_{z}d + \e_{y}\lambda_{1})}\left(\phi_{F}\left(x^{t_{z}}_{k_{z}}\right)\right)\right)y^{\sigma d} = \Phi_{\sigma d}\left(a^{-1}_{1}\right)y^{\sigma d}.
\]
This implies that 
$[\phi(\Phi_{\delta}(a_{3}))]_{F} =_{G} \Phi_{\sigma d}\left(a^{-1}_{1}\right)$. By $(iv)$ of \cref{prop:non-len p results 2}, we have
\begingroup
\addtolength{\jot}{0.5em}
\begin{align*}
    \Phi_{\e_{y}\delta}\left(\left[\phi(a_{3})\right]_{F}\right) &=_{G} \Phi_{\sigma d}\left(a^{-1}_{1}\right) \\
    \Rightarrow \Phi_{\e_{y}\delta - \sigma d}\left(\left[\phi(a_{3})\right]_{F}\right) &=_{G} a^{-1}_{1} \\
    \Rightarrow \Phi_{\e_{y}\delta - \sigma d}\left(\left[\phi(a_{3})\right]_{F}\right)^{-1} &=_{G} a_{1}. 
\end{align*}
\endgroup
By $(v)$ of \cref{prop:non-len p results 2}, we have $a_{1} =_{G} \Phi_{\e_{y}\delta - \sigma d + \sigma d}\left(\left[\phi(a_{3})^{-1}\right]_{F}\right) =  \Phi_{\e_{y}\delta}\left(\left[\phi\left(a^{-1}_{3}\right)\right]_{F}\right),$ which completes the proof.
\end{proof}

\begin{proof}[Proof of \cref{thm:non len p main result}]
    The reverse direction is immediate from \cref{defn: len p y shift} and \cref{defn:x shift}, and the requirement for $l\left(u_{1}\right) = l\left(v_{1}\right)$ follows from the $\phi$-CR assumption. For the forward direction, suppose $\phi(w)^{-1}uw =_{G} \left(v_{1}y^{\beta_{1}}, y^{\beta_{2}}\right)$, for some element \\$w = \left(x^{t_{1}}_{k_{1}}\dots x^{t_{z}}_{k_{z}}y^{\lambda_{1}}, y^{\lambda_{2}}\right) \in G_{\mathrm{mod}}$. For notation, let $u = (u_{1}y^{\alpha_{1}}, y^{\alpha_{2}}) \in G_{\mathrm{mod}}$, where \\$u_{1} = x^{p_{1}}_{i_{1}}\dots x^{p_{q}}_{i_{q}} \in F_{n}$. Let $\alpha=\alpha_{1}+\alpha_{2}$ and $\lambda=\lambda_{1}+\lambda_{2}$. From \cref{eqn:non len p general form}, we can assume that 
    \[ v_{1} =_{G} \Phi_{-(t_{z}d + \e_{y}\lambda_{1})}\left(\phi_{F}\left(x^{t_{z}}_{k_{z}}\right)^{-1}\right)\dots \Phi_{\gamma}\left(\phi_{F}\left(x^{t_{1}}_{k_{1}}\right)^{-1}\right) \Phi_{\gamma}\left(x^{p_{1}}_{i_{1}}\dots x^{p_{q}}_{i_{q}}\right)\Phi_{\gamma+\alpha_{1}}\left(x^{t_{1}}_{k_{1}}\dots x^{t_{z}}_{k_{z}}\right),
    \]
    where $\gamma = -(\sigma d + \e_{y}\lambda_{1})$. Recall from \cref{eqn:a1 a2 a3} that
    \begin{align*}
    a_{1} &= \Phi_{-(t_{z}d + \e_{y}\lambda_{1})}\left(\phi_{F}\left(x^{t_{z}}_{k_{z}}\right)^{-1}\right)\dots \Phi_{\gamma}\left(\phi_{F}\left(x^{t_{1}}_{k_{1}}\right)^{-1}\right), \\
    a_{2} &= \Phi_{\gamma}\left(x^{p_{1}}_{i_{1}}\dots x^{p_{q}}_{i_{q}}\right), \\
    a_{3} &= \Phi_{\gamma+\alpha_{1}}\left(x^{t_{1}}_{k_{1}} \dots x^{t_{z}}_{k_{z}}\right),
\end{align*}
and so $v_{1} =_{G} a_{1}a_{2}a_{3}$, where $a_{1}, a_{2}, a_{3} \in F_{n}$ are geodesic. Since $l\left(u_{1}\right) = l\left(v_{1}\right)$, there exists precisely $z$ free cancellations in $a_{1}a_{2}a_{3}$. Moreover, free cancellation occurs in $a_{1}a_{2}$ or $a_{2}a_{3}$, but not both. We consider whether $a_{2}$ fully cancels in $a_{1}a_{2}a_{3}$.

\textbf{Case 1:} $a_{2}$ is not fully cancelling.\\
Suppose there are cancellations in $a_{1}a_{2}$ only. Since $u$ is $\phi$-CR, $a_{1}$ must fully cancel, and so we can write $a_{2} = a^{-1}_{1}a_{22}$. Then $v =_{G} a_{22}a_{3}y^{-\delta}y^{\alpha_{2}+\lambda_{2}(1-\e_{y})}$, where $\delta = -(\gamma + \alpha_{1} + \lambda_{1})$. We consider the element
\[ \overline{v} =_{G} \phi(\Phi_{\delta}(a_{3}))\cdot v \cdot \Phi_{\delta}(a_{3})^{-1}.
\]
This is precisely an $x$-shift of $v$, which gives us
\begin{align*}
    \phi(\Phi_{\delta}(a_{3}))\cdot v \cdot \Phi_{\delta}(a_{3})^{-1} &=_{G} \phi(\Phi_{\delta}(a_{3}))\cdot a_{22}a_{3}y^{-\delta}y^{\alpha_{2}+\lambda_{2}(1-\e_{y})} \cdot \Phi_{\delta}(a_{3})^{-1} \\
    &=_{G} \phi(\Phi_{\delta}(a_{3}))\cdot a_{22}a_{3}a^{-1}_{3}y^{-\delta}y^{\alpha_{2}+\lambda_{2}(1-\e_{y})} \\
    &=_{G}  \phi(\Phi_{\delta}(a_{3}))\cdot a_{22}y^{-\delta}y^{\alpha_{2}+\lambda_{2}(1-\e_{y})},
\end{align*}
and so the subword $a_{3}$ has been moved from the back to the front of the free component. Using \cref{lemma:claims for main proof}, we now have
\begin{align*}
    \overline{v} &=_{G}  \phi(\Phi_{\delta}(a_{3}))\cdot a_{22}y^{-\delta}y^{\alpha_{2}+\lambda_{2}(1-\e_{y})} \\
    &=_{G} \Phi_{\sigma d}\left(a^{-1}_{1}\right)y^{\sigma d}a_{22}y^{-\delta}y^{\alpha_{2}+\lambda_{2}(1-\e_{y})} \\
    &=_{G} \Phi_{\sigma d}\left(a^{-1}_{1}a_{22}\right)y^{\alpha+\lambda(1-\e_{y})} \\
    &= \Phi_{\sigma d}(a_{2})y^{\alpha+\lambda(1-\e_{y})} \\
    &= \Phi_{-\e_{y}\lambda_{1}}(u_{1})y^{\alpha+\lambda(1-\e_{y})}.
\end{align*}
Note $a_{2} = \Phi_{\gamma}(u_{1})$ and $\sigma d + \gamma = -\e_{y}\lambda_{1}$. Then $\overline{v} \xleftrightarrow{y} u$ (by twisted conjugating by $y^{-\lambda}$), and so $v \xmapsto{x} \overline{v} \xleftrightarrow{y} u$ as required. A similar method holds in the case where there are cancellations in $a_{2}a_{3}$ only.

\textbf{Case 2:} $a_{2}$ is fully cancelling. \\
First suppose $a_{2}$ fully cancels in $a_{3}$. We let $a_{3} = a^{-1}_{2}a_{31}$, and so $\phi(a_{31})^{-1}\phi(a_{2})$. By \cref{lemma:claims for main proof}, we have 
\[ v_{1} =_{G} a_{1}a_{31} =_{G} \Phi_{\e_{y}\delta}\left(\left[\phi\left(a^{-1}_{3}\right)\right]_{F}\right)a_{31} = \Phi_{\e_{y}\delta}\left(\left[\phi\left(a^{-1}_{31}a_{2}\right)\right]_{F}\right)a_{31}.
\]
Let $\sigma_{31}$ denote the exponent sum of $a_{31}$. By \cref{lemma:1}, we have\\ $v_{1} =_{G} \Phi_{\e_{y}\delta}\left(\left[\phi\left(a^{-1}_{31}\right)\right]_{F}\right)\Phi_{\e_{y}\delta - \sigma_{31}d}\left(\left[\phi(a_{2})\right]_{F}\right)a_{31}.$ We can assume there exists further cancellations with $a_{31}$, otherwise $v$ would not be $\phi$-CR. 

First suppose $a_{31}$ fully cancels, and so $\Phi_{\e_{y}\delta - \sigma_{31}d}\left(\left[\phi(a_{2})\right]_{F}\right) = a_{4}a^{-1}_{31}.$ This implies that \\ $[\phi(a_{2})]_{F} =  \Phi_{-\e_{y}\delta + \sigma_{31}d}\left(a_{4}a^{-1}_{31}\right)$. By \cref{prop:non-len p results 2} and \cref{lemma:3}, this implies that
\[ a_{2} = [\phi^{-1}\left(\Phi_{-\e_{y}\delta + \sigma_{31}d}\left(a_{4}a^{-1}_{31}\right)\right)]_{F} = \Phi_{-\delta + \e_{y}\sigma_{31}d}\left([\phi^{-1}\left(a_{4}a^{-1}_{31}\right)]_{F}\right).
\]
Returning to our original elements $u$ and $v$, we now have
\[ u =_{G} u_{1}y^{\alpha_{1}}y^{\alpha_{2}}, \quad v =_{G} \Phi_{\e_{y}\delta}\left([\phi\left(a^{-1}_{31}\right)]_{F}\right)a_{4}y^{-\delta}y^{\alpha_{2}+\lambda_{2}(1-\e_{y})}.
\]
We first apply a $y$-shift to $u$ to get $\overline{u} =_{G} \Phi_{\sigma d}(a_{2})y^{\alpha+\lambda(1-\e_{y})}.$ Next, we apply an $x$-shift to cancel out the $\Phi_{\sigma d}(a_{2})$ fully. Consider the element
\[\phi(\Phi_{\delta}(a_{2}))\cdot \overline{u} \cdot \Phi_{\delta}\left(a^{-1}_{2}\right) =_{G} \phi(\Phi_{\delta}(a_{2}))y^{\alpha+\lambda(1-\e_{y})}
\]
(note $\delta + \alpha_{1} + \lambda_{1}(1-\e_{y}) = \sigma d \Mod{n})$. Let $\sigma_{2}, \sigma_{4}$ denote the exponent sums of $a_{2}$ and $a_{4}$ respectively. Since $a_{2} = \Phi_{-\delta + \e_{y}\sigma_{31}d}\left([\phi^{-1}\left(a_{4}a^{-1}_{31}\right)]_{F}\right)$, then $\sigma_{2} = \e_{x}(\sigma_{4}-\sigma_{31})$ by \cref{prop:non-len p results 2}. We then have
\begingroup
\addtolength{\jot}{0.5em}
\begin{align*}
    \phi(\Phi_{\delta}(a_{2})) &= \phi\left(\Phi_{\delta}\left(\Phi_{-\delta + \e_{y}\sigma_{31}d}\left(\left[\phi^{-1}\left(a_{4}a^{-1}_{31}\right)\right]_{F}\right)\right)\right) \\
    &= \phi\left(\Phi_{\e_{y}\sigma_{31}d}\left(\left[\phi^{-1}\left(a_{4}a^{-1}_{31}\right)\right]_{F}\right)\right) \\
    &= \phi\left(\left[\phi^{-1}\left(\Phi_{\sigma_{31}d}\left(a_{4}a^{-1}_{31}\right)\right)\right]_{F}\right)  \\
    &=_{G} \Phi_{\sigma_{31}d}\left(a_{4}a^{-1}_{31}\right)y^{\e_{x}d(\sigma_{4}-\sigma_{31})}  \\
    &= \Phi_{\sigma_{31}d}\left(a_{4}a^{-1}_{31}\right)y^{\sigma_{2}d},
\end{align*}
\endgroup
by \cref{prop:non-len p results 2}. Our new term is now
\begin{align*}
    \phi(\Phi_{\delta}(a_{2}))\cdot \overline{u} \cdot \Phi_{\delta}\left(a^{-1}_{2}\right) &=_{G} \phi(\Phi_{\delta}(a_{2}))y^{\alpha+\lambda(1-\e_{y})}\\
    &=_{G} \Phi_{\sigma_{31}d}\left(a_{4}a^{-1}_{31}\right)y^{\alpha+\lambda(1-\e_{y})+\sigma_{2}d}. 
\end{align*}
Now we apply a further $x$-shift to move the $a^{-1}_{31}$ component, to obtain $v$ as required. We first note that $\sigma = \sigma_{31}-\sigma_{2}$, since $a_{3} = a_{2}^{-1}a_{31}$, and so
\[ d(\sigma_{31} -\sigma_{2}) - \alpha_{1} - \lambda_{1}(1-\e_{y}) = \sigma d -\alpha_{1}-\lambda_{1}(1-\e_{y}) = \delta.
\]
We apply the following $x$-shift, again using \cref{prop:non-len p results 2}:
\begingroup
\addtolength{\jot}{0.5em}
\begin{align*}
    \phi(\Phi_{\delta}(a_{31}))^{-1}\cdot \Phi_{\sigma_{31}d}\left(a_{4}a^{-1}_{31}\right)y^{\alpha+\lambda(1-\e_{y})+\sigma_{2}d}\cdot \Phi_{\delta}(a_{31}) &=_{G} \phi(\Phi_{\delta}(a_{31}))^{-1}\Phi_{\sigma_{31}d}(a_{4})y^{\alpha+\lambda(1-\e_{y})+\sigma_{2}d} \\
    &=_{G} \left[\phi\left(\Phi_{\delta}\left(a^{-1}_{31}\right)\right)\right]_{F}y^{-\sigma_{31}d}\Phi_{\sigma_{31}d}(a_{4})y^{\alpha+\lambda(1-\e_{y})+\sigma_{2}d} \\
    &=_{G} \left[\phi\left(\Phi_{\delta}\left(a^{-1}_{31}\right)\right)\right]_{F}a_{4}y^{\alpha+\lambda(1-\e_{y})+\sigma_{2}d-\sigma_{3}d} \\
    &= \Phi_{\e_{y}\delta}\left(\left[\phi\left(a^{-1}_{31}\right)\right]_{F}\right)a_{4}y^{-\delta}y^{\alpha_{2}+\lambda_{2}(1-\e_{y})} \\
    &= v. 
\end{align*}
\endgroup
We now have a route from $u$ to $v$ via $x$-shifts and $y$-shifts only. 

Suppose instead that $\Phi_{\e_{y}\delta - \sigma_{31}d}\left(\left[\phi(a_{2})\right]_{F}\right)$ fully cancels. Then we have \\
$a_{31} = \left(\Phi_{\e_{y}\delta - \sigma_{31}d}\left(\left[\phi(a_{2})\right]_{F}\right)\right)^{-1}a_{32}$, which implies that
\[ \phi(a_{31})^{-1} = \phi(a_{32})^{-1}\phi\left(\Phi_{\e_{y}\delta - \sigma_{31}d}\left(\left[\phi(a_{2})\right]_{F}\right)\right).
\]
For notation, let $\overline{a_{2}} = \phi\left(\Phi_{\e_{y}\delta - \sigma_{31}d}\left(\left[\phi(a_{2})\right]_{F}\right)\right)$, and let $\sigma_{32}$ denote the exponent sum of $a_{32}$. By \cref{prop:non len p 1} and \cref{lemma:1}, we have
\begingroup
\addtolength{\jot}{0.5em}
\begin{align*}
    v_{1} =_{G} \Phi_{\e_{y}\delta}\left(\left[\phi\left(a^{-1}_{31}\right)\right]_{F}\right)a_{32} &=  \Phi_{\e_{y}\delta}\left(\left[\phi\left(a^{-1}_{32}\right)\overline{a_{2}}\right]_{F}\right)a_{32}\\
    &= \Phi_{\e_{y}\delta}\left(\left[\phi\left(a^{-1}_{32}\right)\right]_{F}\right)\Phi_{\e_{y}\delta + \sigma_{32}d}\left(\left[\overline{a_{2}}\right]_{F}\right)a_{32} \\
    &= \left[\phi\left(\Phi_{\delta}\left(a^{-1}_{32}\right)\right)\right]_{F}\Phi_{\e_{y}\delta + \sigma_{32}d}\left(\left[\overline{a_{2}}\right]_{F}\right)a_{32},
\end{align*}
\endgroup
and the result follows by reverse induction on the length of $w$. The case where $a_{2}$ fully cancels with $a_{1}$ follows a similar proof. 
\end{proof}

\subsection{Finite set of representatives}
Again for this section we assume $\phi \in \mathrm{Out}(G(m))$ is known and of the form in \cref{even:all auto forms}.
\begin{defn}
    Let $u \in G_{\mathrm{mod}}$ be $\phi$-CR. We define $\mathcal{D}_{u}$ to be the set of all words $v \in G_{\mathrm{mod}}$ obtained from $u$ via a sequence of $x$-shifts and $y$-shifts. Note that $u \sim_{\phi} v$ for all $v \in \mathcal{D}_{u}$ by \cref{thm:non len p main result}.
\end{defn}
Suppose $u \in G_{\mathrm{mod}}$ is $\phi$-CR. In many cases, the set $\mathcal{D}_{u}$ of all words $v \in G_{\mathrm{mod}}$ obtained from $u$ via a sequence of $x$-shifts and $y$-shifts is infinite. The aim of this section is to show that $\mathcal{D}_{u}$ is precisely an infinite union of finite sets of equal size, which we denote by $\overline{\mathcal{D}_{u_{i}}}$ ($i \in \Z$). We will show that for all $j \neq i$, there exists a fixed $z \in \Z$ such that for all $v_{i} \in \overline{\mathcal{D}_{u_{i}}}$, there exists $v_{j} \in \overline{\mathcal{D}_{u_{j}}}$ such that the quotient elements of $v_{i}$ and $v_{j}$ are equal, and the Garside powers of $v_{i}$ and $v_{j}$ are equal up to a power of $y^{z}$. This will allow us to construct an algorithm to check when two elements are twisted conjugate, with respect to a given $\phi \in \mathrm{Out(G(m))}$.
\comm{
We will use our modular normal form to construct one of these finite sets $\overline{\mathcal{D}_{u_{i}}}$, and the corresponding power $z \in \Z$. Then, when checking if an element $v \in G_{\mathrm{mod}}$ is twisted conjugate to $u$ reduces to checking if there exists $w \in \overline{\mathcal{D}_{u_{i}}}$, such that the quotient element of $v$ and $w$ are the same, and the Garside powers of $v$ and $w$ are equal up to a power of $y^{z}$. }

We first return to our definition of $x$-shifts (recall \cref{defn:x shift}), and distinguish between moving letters from the front to the back of the free component of a word $u \in G_{\mathrm{mod}}$, or vice versa.

\begin{defn}
    Let $u = (u_{11}u_{12}y^{\alpha_{1}}, y^{\alpha_{2}}) \in G_{\mathrm{mod}}$ be $\phi$-CR, and let $\sigma_{11}, \sigma_{12}$ denote the exponent sums of $u_{11}$ and $u_{12}$ respectively. Suppose $v \in G_{\mathrm{mod}}$ is obtained from $u$ via an $x$-shift. We say $v$ is related to $u$ by a \emph{back-to-front} (BF) $x$-shift if $v$ is equivalent to 
    \[ v =_{G} \phi(\Phi_{-\alpha_{1}}(u_{12}))u_{11}y^{\alpha_{1}}y^{\alpha_{2}} =_{G} [\phi(\Phi_{-\alpha_{1}}(u_{12}))]_{F}\Phi_{\sigma_{2}d}(u_{11})y^{\alpha_{1}+\alpha_{2}+\sigma_{12}d}.
    \]
    Similarly $v$ is related to $u$ by a \emph{front-to-back} (FB) $x$-shift if $v$ is equivalent to
    \[ v =_{G} u_{12}y^{\alpha_{1}}\phi^{-1}(u_{11})y^{\alpha_{2}} =_{G} u_{12}\Phi_{\alpha_{1}}([\phi^{-1}(u_{11})]_{F})y^{\alpha_{1}+\alpha_{2}-\e_{x}\e_{y}\sigma_{11}d }.
    \]
    These formulae are obtained using \cref{prop:non-len p results 2}. We let $\xmapsto{BF}$ (resp. $\xmapsto{FB}$) denote a BF (resp. FB) $x$-shift of a generator $x_{i} \in X_{n}$, and we let $\xmapsto{BF*}$ (resp. $\xmapsto{FB*}$) denote a sequence of BF (resp. FB) $x$-shifts.
\end{defn}
We now highlight some cases where $\mathcal{D}_{u}$ is finite. 

\begin{prop}\label{prop:when set finite}
    Let $u = (u_{1}y^{\alpha_{1}}, y^{\alpha_{2}}) \in G_{\mathrm{mod}}$ be $\phi$-CR, and let $\sigma$ denote the exponent sum of $u_{1}$. Suppose $\e_{y} = 1$ and either $\sigma = 0$ or $\e_{x} = -1$. Then the set $\mathcal{D}_{u}$ is finite.
\end{prop}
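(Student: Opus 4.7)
The plan is to bound both the free components and the total $y$-exponents of elements of $\mathcal{D}_u$ separately. Since every element of $\mathcal{D}_u$ is reached by length-preserving shifts from the $\phi$-CR element $u$, its free component has length $q = l(u_1)$, and the free components therefore lie in a set of size at most $(2n)^q$; the quotient exponent $\beta_1 \in \{0, \ldots, n-1\}$ is already finite. It remains to bound the Garside power $\beta_2$, or equivalently the total $y$-exponent.

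Because $\e_y = 1$, \cref{prop:y shift formula} shows that $y$-shifts preserve the total $y$-exponent, so only $x$-shifts change it, by $\sigma_{12} d$ (BF) or $-\e_x \sigma_{11} d$ (FB). The main step is to analyse what happens under a full cycle of BF shifts, that is, $q$ successive one-letter BF shifts. By induction using \cref{prop:non-len p results 2}, such a cycle applies the letter-map $\phi_{F} \circ \Phi_{-\alpha_1}$ to every letter of $u_1$ and changes the total $y$-exponent by $\sigma d$. Under the hypothesis one of two scenarios holds: if $\sigma = 0$, then a single full cycle returns the $y$-exponent to its original value; while if $\e_x = -1$ (with any $\sigma$), then by \cref{prop:non-len p results 2}(vi) the cycle inverts the exponent sum from $\sigma$ to $-\sigma$, so two full cycles restore both the exponent sum and the $y$-exponent. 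In either case, the letter-map acts via index shifts modulo $n$, so iterating the cycle visits only finitely many free components, and partial cycles produce $y$-exponents in a window of width at most $q|d|$ around $\alpha_1 + \alpha_2$. An analogous analysis applies to FB cycles.

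Finally I would show that arbitrary interleaved sequences of $x$- and $y$-shifts do not escape this bounded orbit. The key identity, proved by a direct calculation analogous to the BF/FB manipulation in the proof of \cref{thm:non len p main result}, is that an FB-shift of a piece inverts the corresponding BF-shift up to a $y$-shift, using \cref{lemma:3} together with \cref{prop:non-len p results 2}; moreover, in the $\e_y = 1$ setting, one-letter BF-shifts commute with $y$-shifts by a short calculation invoking \cref{prop:non len p 1}. Together these reduce arbitrary shift sequences to pure BF cycles followed by $y$-shifts, whose orbit is finite by the cycle argument above, so $\mathcal{D}_u$ is finite. The main obstacle is handling multi-letter shifts cleanly, since they do not decompose exactly as compositions of one-letter shifts (they differ by internal $\Phi$-shifts inside the word); this is addressed by a careful induction using \cref{prop:non-len p results 2}, which places their images in the same finite orbit of twisted cyclic rotations of $u_1$ and thus within the finite set already bounded above.
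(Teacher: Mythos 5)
Your proposal is correct and follows essentially the same route as the paper: since $\e_{y}=1$ makes $y$-shifts exponent-preserving, one tracks the total $y$-exponent through successive BF $x$-shifts and observes that a full cycle contributes $\sigma d$ and the next cycle $\e_{x}\sigma d$, so under either hypothesis the exponent returns to $\alpha_{1}+\alpha_{2}$ and oscillates in a bounded window, while the free components range over a finite set. The extra care you take about interleaving shifts and reducing FB to BF is not spelled out in the paper's proof of this proposition but is exactly the content of its later \cref{prop: commute x shifts y shifts} and \cref{prop:FB not needed}, so your argument is, if anything, slightly more self-contained.
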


\begin{proof}
    Since $u$ is $\phi$-CR, the length of the free components of all elements $v \in \mathcal{D}_{u}$ will equal the length of $u_{1}$. For $\mathcal{D}_{u}$ to be finite, we need to show that the exponents of the $y$ terms are bounded in $\mathcal{D}_{u}$. By \cref{defn: len p y shift}, if $\e_{y}=1$, then any $y$-shift leaves the $y$ exponent of $u$ unchanged. It remains therefore to check any $x$-shifts obtained from $u$.
    
    Let $u_{1} = x^{p_{1}}_{i_{1}}\dots x^{p_{q}}_{i_{q}} \in F_{n}$, and consider the $y$ exponent of words after applying BF $x$ shifts to $u$. The exponent follows the following sequence:
    \begin{align*}
    \alpha_{1} + \alpha_{2} \xmapsto{BF} \alpha_{1} + \alpha_{2}+p_{q}d &\xmapsto{BF} \alpha_{1} + \alpha_{2} + (p_{q} + p_{q-1})d \\
    & \quad \vdots \\
    &\xmapsto{BF} \alpha_{1} + \alpha_{2} + \sigma d \\
    &\xmapsto{BF} \alpha_{1} + \alpha_{2} + \sigma d + \e_{x}p_{q}d \\
        &\xmapsto{BF} \alpha_{1} + \alpha_{2} + \sigma d + \e_{x}d(p_{q} + p_{q-1}) \\
        & \quad \vdots \\
        &\xmapsto{BF} \alpha_{1} + \alpha_{2} + \sigma d(1+\e_{x}) \\
        & \quad \vdots 
    \end{align*}
    Therefore if $\sigma = 0$ or $\e_{x} = -1$, our $y$ exponent must return to $\alpha_{1} + \alpha_{2}$ and repeat the same pattern. A similar sequence holds if we consider FB shifts, and so the set $\mathcal{D}_{u}$ is finite.
\end{proof}
\begin{exmp}
    Returning to \cref{exmp:non len p algorithm}, we compute the set $\mathcal{D}_{u}$. We note that by \cref{prop:nonlen p cyc reduced}, $u$ is $\phi$-CR. Moreover, since the exponent sum of the free component of $u$ equals 0, then the set $\mathcal{D}_{u}$ will be finite by \cref{prop:when set finite}. By computing all possible $x$-shifts and $y$-shifts from $u$, we find that $\mathcal{D}_{u}$ contains the following six elements.
    \[ \mathcal{D}_{u} = 
    \begin{Bmatrix}
        \left(x_{0}x^{-1}_{2}y^{2}, 1_{G}\right), & \left(x_{1}x_{0}^{-1}y^{2}, 1_{G}\right), & \left(x_{2}x^{-1}_{1}y^{2}, 1_{G}\right), \\
        \left(x^{-1}_{2}x_{1}y, y^{-3}\right), & \left(x^{-1}_{0}x_{2}y, y^{-3}\right), & \left(x^{-1}_{1}x_{0}y, y^{-3}\right).
    \end{Bmatrix}
    \]
    \cref{table:Au} summarises how each of the elements in $\mathcal{D}_{u}$ are linked via $x$-shifts and $y$-shifts. Here blue lines represent $y$-shifts, and red lines represent $x$-shifts. 
    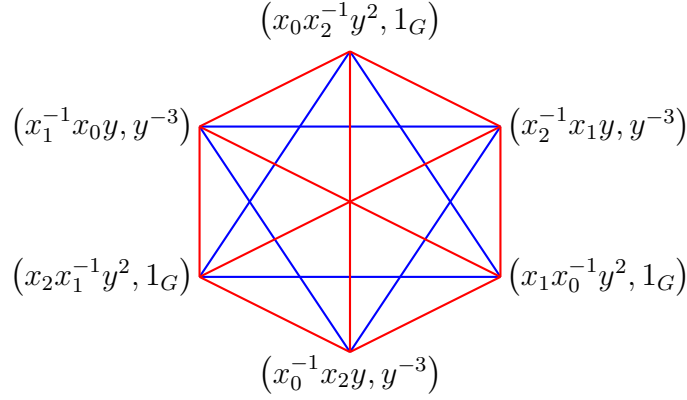
\begin{figure}[!h]
        \centering
        \begin{tikzpicture}
            \node at (0,2.4) {$\left(x_{0}x^{-1}_{2}y^{2}, 1_{G}\right)$};
            \node at (0, -2.3) {$\left(x^{-1}_{0}x_{2}y, y^{-3}\right)$};
            \node at (3.3, 1) {$\left(x^{-1}_{2}x_{1}y, y^{-3}\right)$};
            \node at (3.3, -1) {$\left(x_{1}x^{-1}_{0}y^{2}, 1_{G}\right)$};
            \node at (-3.3, 1) {$\left(x^{-1}_{1}x_{0}y, y^{-3}\right)$};
            \node at (-3.3, -1) {$\left(x_{2}x^{-1}_{1}y^{2}, 1_{G}\right)$};
            \draw[thick, blue] (0,2) -- (2,-1); 
            \draw[thick, blue] (0,2) -- (-2,-1);
            \draw[thick, blue] (-2,-1) -- (2,-1);
            \draw[thick, blue] (-2,1) -- (2,1);
            \draw[thick, blue] (-2, 1) -- (0,-2);
            \draw[thick, blue] (2,1) -- (0,-2);
            \draw[thick, red] (0,2) -- (2,1) ;
            \draw[thick, red] (2,-1) -- (2,1) ;
            \draw[thick, red] (2,-1) -- (0,-2) ;
            \draw[thick, red] (0,-2) -- (-2,-1) ;
            \draw[thick, red] (-2,-1) -- (-2,1) ;
            \draw[thick, red] (-2,1) -- (0,2) ;
            \draw[thick, red] (0,2) -- (0,-2) ;
            \draw[thick, red] (-2,1) -- (2,-1) ;
            \draw[thick, red] (-2,-1) -- (2,1) ;
        \end{tikzpicture}
        \caption{Connections between elements of $\mathcal{D}_{u}$}
        \label{table:Au}
    \end{figure}
\end{exmp}
We now establish some results about $x$-shifts and $y$-shifts, which will allow us to construct our finite set of representatives $\overline{\mathcal{D}_{u_{i}}}$ for every twisted conjugacy class.

\begin{lemma}\label{lemma:BF to same elements}
    Let $v = (v_{1}, v_{2}), w = (w_{1}, w_{2}) \in G_{\mathrm{mod}}$ be $\phi$-CR. Suppose $v \xmapsto{BF} u_{v} = (u_{v_{1}}, u_{v_{2}}) \in G_{\mathrm{mod}}$ and $w \xmapsto{BF} u_{w} = (u_{w_{1}}, u_{w_{2}}) \in G_{\mathrm{mod}}$, such that $u_{v_{1}}= u_{w_{1}}$. Then $v_{1} = w_{1}$. 
\end{lemma}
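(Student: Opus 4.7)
The plan is to unpack the BF shift formula on both sides, use the $\phi$-CR hypothesis to rule out cancellation, and read off $v_1 = w_1$ from a letter-by-letter comparison. The only nontrivial step is the no-cancellation claim, which is exactly what $\phi$-CR is designed to deliver; the remainder is bookkeeping with a pair of congruences modulo $n$.

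First I would write $v_1 = v_1' x_i^p y^{\alpha_1}$ and $w_1 = w_1' x_j^r y^{\beta_1}$, so that the BF $x$-shift moves the last free letter to the front. Expanding Definition~\ref{defn:x shift} and invoking Proposition~\ref{prop:non len p 1}(i) to commute $\phi_F$ past $\Phi$ yields
\[ u_{v_{1}} =_{G} \Phi_{-\e_{y}\alpha_{1}}\!\bigl(\phi_{F}(x_{i}^{p})\bigr)\cdot \Phi_{pd}(v_{1}')\cdot y^{[\alpha_{1}+pd]},
\]
and the analogous formula for $u_{w_{1}}$. Since $\Phi_{pd}$ is an automorphism of $F_n$, $\Phi_{pd}(v_1')$ is geodesic of the same length as $v_1'$, so the only place the displayed free component could fail to be reduced is at the junction between the prepended single letter and the first letter of $\Phi_{pd}(v_1')$. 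Any cancellation there would drop the free-component length below $l(v_1' x_i^p)$, violating $\phi$-CR (Definition~\ref{defn: twisted CR}); hence none occurs, the expression is already in modular normal form, and the free component of $u_{v_1}$ has length exactly $l(v_1' x_i^p)$, and likewise for $w$.

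Now impose $u_{v_1} = u_{w_1}$. Equating the $y$-tails gives $\alpha_1 + pd \equiv \beta_1 + rd \Mod{n}$. Equating the leading free letters and invoking Definition~\ref{defn:phi map free comp} forces the exponents $\e_x p = \e_x r$, hence $p = r$, and reduces the index equality to $i + \alpha_1 \equiv j + \beta_1 \Mod{n}$. Subtracting this from the $y$-tail congruence and using $p = r$ yields $\alpha_1 \equiv \beta_1 \Mod{n}$; since $\alpha_1, \beta_1 \in \{0,\dots,n-1\}$ this forces $\alpha_1 = \beta_1$, and then $i = j$. Cancelling the common leading letter reduces the free parts to $\Phi_{pd}(v_1') = \Phi_{rd}(w_1')$ in $F_n$; as $\Phi_{pd}$ is a bijection and $p = r$, we conclude $v_1' = w_1'$, hence $v_1 = w_1$.
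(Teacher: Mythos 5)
Your proof is correct and follows essentially the same route as the paper: expand the BF $x$-shift formula for both words, compare the prepended letter (exponent and index) and the $y$-exponent of the quotient element modulo $n$, and conclude via bijectivity of $\Phi$. The only difference is that you explicitly justify, via the $\phi$-CR hypothesis, that no free cancellation occurs at the junction so that the letter-by-letter comparison is legitimate — a point the paper's proof leaves implicit.
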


\begin{proof}
    Let $v = \left(\overline{v_{1}}x^{\varepsilon_{i}}_{i}y^{\alpha_{1}}, y^{\alpha_{2}}\right)$ and $w = \left(\overline{w_{1}}x^{\varepsilon_{j}}_{j}y^{\lambda_{1}}, y^{\lambda_{2}}\right)$, for some $\overline{v_{1}}, \overline{w_{1}} \in F_{n}$, \\$i,j \in \{0, \dots, n-1\}, \varepsilon_{i}, \varepsilon_{j} = \pm 1$. If we apply a BF $x$-shift to each of these words, we get
    \begin{align*}
        v &\xmapsto{BF} \left[\phi\left(\Phi_{-\alpha_{1}}\left(x^{\varepsilon_{i}}_{i}\right)\right)\right]_{F}\Phi_{\varepsilon_{i}d}\left(\overline{v_{1}}\right)y^{\alpha_{1}+\varepsilon_{i}d}y^{\alpha_{2}}, \\
        w &\xmapsto{BF} \left[\phi\left(\Phi_{-\lambda_{1}}\left(x^{\varepsilon_{j}}\right)\right)\right]_{F}\Phi_{\varepsilon_{j}d}\left(\overline{w_{1}}\right)y^{\lambda_{1}+\varepsilon_{j}d}y^{\lambda_{2}}. 
    \end{align*}
    To obtain equality of the quotient elements of $v$ and $w$, the powers of $\left[\phi\left(\Phi_{-\alpha_{1}}\left(x^{\varepsilon_{i}}_{i}\right)\right)\right]_{F}$ and $\left[\phi\left(\Phi_{-\lambda_{1}}\left(x^{\varepsilon_{j}}_{j}\right)\right)\right]_{F}$ must be equal. By $(vi)$ of \cref{prop:non-len p results 2}, this implies that $\e_{x}\varepsilon_{i} = \e_{x}\e_{j}$, and so $\e_{i}=\e_{j}$. This immediately implies that $\overline{v_{1}} = \overline{w_{1}}$, $\alpha_{1} = \lambda_{1}$ and $i=j$ as required. 
\end{proof}

\begin{cor}\label{cor:existence of chain BF moves}
    For every $\phi$-CR element $u = (u_{1}, u_{2}) \in G_{\mathrm{mod}}$, there exists a finite sequence of BF $x$-shifts to an element $v = (v_{1}, v_{2}) \in G_{\mathrm{mod}}$, such that $u_{1} = v_{1}$. 
\end{cor}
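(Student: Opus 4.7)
The plan is to combine a pigeonhole argument with the injectivity statement \cref{lemma:BF to same elements}. The whole argument will be about iterating the single-generator BF $x$-shift on $u$, and tracking only the quotient element.

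First I would verify that a single-generator BF $x$-shift sends a $\phi$-CR element to another $\phi$-CR element whose quotient element has free component of the same length. Since BF $x$-shifts preserve the twisted conjugacy class and are defined so that the free-component length is preserved (one generator is moved from the back to the front, after a rewrite via $\phi$ and $\Phi$), minimality of the free-component length in \cref{defn: twisted CR} is preserved along the orbit.

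Next I would bound the space of quotient elements that can appear. Writing a quotient element as $hy^{c}$ with $h \in F_{n}$ geodesic and $0 \leq c \leq n-1$, the length $l(h)$ is fixed along the BF-orbit by the previous paragraph, and $c$ takes only finitely many values by definition of the modular normal form. Since $F_{n}$ contains only finitely many geodesic words of a given length, the collection $S$ of quotient elements appearing in the BF-orbit of $u$ is finite.

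Finally I would close by pigeonhole and injectivity. Write $u = u^{(0)}$ and let $u^{(k+1)}$ be the single-generator BF shift of $u^{(k)}$; all $u^{(k)}$ are $\phi$-CR by the first step, and their quotient elements lie in $S$. By finiteness of $S$, there exist $0 \leq i < j$ with $u^{(i)}_{1} = u^{(j)}_{1}$. Applying \cref{lemma:BF to same elements} to the pair $(u^{(i-1)}, u^{(j-1)})$ (both $\phi$-CR) gives $u^{(i-1)}_{1} = u^{(j-1)}_{1}$; iterating this peeling $i$ times yields $u_{1} = u^{(j-i)}_{1}$, so $v := u^{(j-i)}$ is reached from $u$ by a finite sequence of BF $x$-shifts and has $v_{1} = u_{1}$. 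I do not anticipate any real obstacle here: the only subtlety is that each peeling step of \cref{lemma:BF to same elements} requires both arguments to be $\phi$-CR, which is why it is essential to observe first that BF $x$-shifts preserve the $\phi$-CR property.
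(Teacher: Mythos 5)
Your proposal is correct and follows essentially the same route as the paper: iterate single-generator BF $x$-shifts, use finiteness of the possible quotient elements (pigeonhole) to find a repetition $u^{(i)}_{1}=u^{(j)}_{1}$, and then peel back with \cref{lemma:BF to same elements} to conclude $u_{1}=u^{(j-i)}_{1}$. The only difference is that you spell out two points the paper leaves implicit — that BF $x$-shifts preserve the $\phi$-CR property (needed to invoke \cref{lemma:BF to same elements} at each peeling step) and the explicit finiteness count behind the pigeonhole — which is a reasonable tightening rather than a new idea.
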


\begin{proof}
    Let $u_{i} = (u_{i_{1}}, u_{i_{2}}) \in G_{\mathrm{mod}}$ denote elements obtained from $u = u_{0}$ via a sequence of BF $x$-shifts, where $i \in \Z_{\geq 0}$. Since $u$ is $\phi$-CR, there exists $i \leq j$ such that $u_{i_{1}} = u_{j_{1}}$, that is, we return to an element with the same quotient element. If $i = 0$ we are done, so suppose $i \neq 0$. \cref{lemma:BF to same elements} implies that $u_{(i-1)_{1}} = u_{(j-1)_{1}}$, and so we can reduce the length of our sequence. We repeat this procedure, until we are left with a sequence such that $u_{1_{1}} = u_{s_{1}}$, for some $s < j$. Again we then have $u_{0_{1}} = u_{(s-1)_{1}}$, which gives us the finite sequence required.
\end{proof}

\begin{prop}\label{prop: commute x shifts y shifts}
    BF $x$-shifts and $y$-shifts commute. In particular, if $u \in G_{\mathrm{mod}}$ is $\phi$-CR and $u \xmapsto{BF*} v_{1} \xleftrightarrow{y} w$, for some $v_{1}, w \in G_{\mathrm{mod}}$, then there exists $v_{2} \in G_{\mathrm{mod}}$, such that $u \xleftrightarrow{y} v_{2} \xmapsto{BF*} w$.
\end{prop}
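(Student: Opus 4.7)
The plan is to prove the statement by reducing to the case of a single BF $x$-shift and then comparing the explicit formulas for the two orderings directly using the algebraic results of \cref{prop:non-len p results 2}, most crucially item $(iv)$.

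First I would observe that, by induction on the number of BF $x$-shifts, it suffices to establish the commutation when $u \xmapsto{BF} v_{1} \xleftrightarrow{y} w$ is a single BF shift followed by a single $y$-shift. So write $u = (u_{11}u_{12}y^{\alpha_{1}}, y^{\alpha_{2}})$, where $u_{12}$ is the suffix of the free component being transported to the front, and let $\sigma_{12}$ denote the exponent sum of $u_{12}$. Applying the BF $x$-shift and then a $y$-shift with parameter $\lambda \in \Z$ produces
\[
w =_{G} \Phi_{-\e_{y}\lambda}\!\left(\left[\phi\left(\Phi_{-\alpha_{1}}(u_{12})\right)\right]_{F}\right)\cdot \Phi_{-\e_{y}\lambda}\!\left(\Phi_{\sigma_{12}d}(u_{11})\right)\cdot y^{\alpha_{1}+\alpha_{2}+\sigma_{12}d+\lambda(1-\e_{y})}.
\]
Using $(iv)$ of \cref{prop:non-len p results 2}, the first factor simplifies to $\left[\phi\left(\Phi_{-\alpha_{1}-\lambda}(u_{12})\right)\right]_{F}$, while the composition law $\Phi_{s}\circ\Phi_{t}=\Phi_{s+t}$ collapses the second factor to $\Phi_{\sigma_{12}d-\e_{y}\lambda}(u_{11})$.

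Next I would perform the moves in the reverse order. Applying the $y$-shift first gives
\[
v_{2} =_{G} \Phi_{-\e_{y}\lambda}(u_{11})\cdot \Phi_{-\e_{y}\lambda}(u_{12})\cdot y^{\alpha_{1}+\alpha_{2}+\lambda(1-\e_{y})},
\]
since $\Phi_{-\e_{y}\lambda}$ is a free-group homomorphism. Reducing to modular normal form $(\Phi_{-\e_{y}\lambda}(u_{11})\Phi_{-\e_{y}\lambda}(u_{12})y^{\alpha'_{1}}, y^{\alpha'_{2}})$ with $\alpha'_{1}\equiv \alpha_{1}+\lambda(1-\e_{y})\pmod{n}$, and then applying a BF $x$-shift to the image of $u_{12}$, the exponent-sum invariance of $\Phi$ shows this transported suffix still has exponent sum $\sigma_{12}$, and the shift formula yields
\[
w' =_{G} \left[\phi\left(\Phi_{-\alpha'_{1}-\e_{y}\lambda}(u_{12})\right)\right]_{F}\cdot \Phi_{\sigma_{12}d - \e_{y}\lambda}(u_{11})\cdot y^{\alpha_{1}+\alpha_{2}+\lambda(1-\e_{y})+\sigma_{12}d}.
\]
Since $\alpha'_{1}+\e_{y}\lambda \equiv \alpha_{1}+\lambda\pmod{n}$, the subscript on $\Phi$ in the first factor matches that of $w$, and the two expressions coincide as elements of $G$.

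The main obstacle is bookkeeping: one must track the modular normal form carefully, in particular justifying that $\alpha'_{1}$ is only well-defined mod $n$ and that replacing it by any integer in the same residue class leaves the BF shift unchanged (since $\Phi_{s}$ depends only on $s \bmod n$, and since $y^{kn}$ is central so can be absorbed into the Garside power without disturbing the free component). Once this is established, the equality $w =_{G} w'$ shown above gives $v_{2} \xmapsto{BF} w$, completing the base case, and the inductive step follows immediately by repeatedly pushing the $y$-shift past each BF shift.
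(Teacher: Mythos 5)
Your proposal is correct and follows essentially the same route as the paper: compute the composite in both orders using the explicit BF and $y$-shift formulas, then match the free components via item $(iv)$ of \cref{prop:non-len p results 2} (together with $\Phi_{s}\circ\Phi_{t}=\Phi_{s+t}$) and check the $y$-exponents agree. The only cosmetic difference is that you frame the reduction as induction on single BF shifts before carrying out the computation for a whole transported suffix $u_{12}$, which is exactly the $\xmapsto{BF*}$ calculation the paper performs directly.
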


\begin{proof}
    Let $u = (u_{11}u_{12}y^{\alpha_{1}}, y^{\alpha_{2}}) \in G_{\mathrm{mod}}$ be $\phi$-CR, and let $\sigma_{12}$ denote the exponent sum of $u_{12}$. Suppose we first apply a sequence of BF $x$-shifts, followed by a $y$-shift. This gives us, for some $\lambda \in \Z$,
    \begingroup
    \addtolength{\jot}{0.5em}
    \begin{align*}
        u &\xmapsto{BF*} \left[\phi\left(\Phi_{-\alpha_{1}}\left(u_{12}\right)\right)\right]_{F}\Phi_{\sigma_{12}d}(u_{11})y^{\alpha_{1}+\alpha_{2}+\sigma_{12}d} \\
        &\xleftrightarrow{y} \Phi_{-\e_{y}\lambda}\left([\phi\left(\Phi_{-\alpha_{1}}\left(u_{12}\right)\right)\right]_{F}\Phi_{\sigma_{12}d}(u_{11}))y^{\alpha_{1}+\alpha_{2}+\sigma_{12}d+\lambda(1-\e_{y})}.
    \end{align*}
    \endgroup
    Suppose instead that we first apply a $y$-shift with respect to the same $\lambda \in \Z$. This gives us
    \[ u \xleftrightarrow{y} \Phi_{-\e_{y}\lambda}(u_{11}u_{12})y^{\alpha_{1}+\lambda(1-\e_{y})}y^{\alpha_{2}}.
    \]
    Now we can apply a sequence of BF $x$-shifts to get
    \begingroup
    \addtolength{\jot}{0.5em}
    \begin{align*}
        u \xleftrightarrow{y} \Phi_{-\e_{y}\lambda}(u_{11}u_{12})y^{\alpha_{1}+\lambda(1-\e_{y})}y^{\alpha_{2}} &\xmapsto{BF*} \phi\left(\Phi_{-\alpha_{1}-\lambda(1-\e_{y})}\left(\Phi_{-\e_{y}\lambda}\left(u_{12}\right)\right)\right)\Phi_{-\e_{y}\lambda}(u_{11})y^{\alpha_{1}+\alpha_{2}+\lambda(1-\e_{y})}\\
        &= \phi\left(\Phi_{-\alpha_{1}-\lambda}\left(u_{12}\right)\right)\Phi_{-\e_{y}\lambda}(u_{11})y^{\alpha_{1}+\alpha_{2}+\lambda(1-\e_{y})}\\
        &=_{G} \left[\phi\left(\Phi_{-\alpha_{1}-\lambda}\left(u_{12}\right)\right)\right]_{F}y^{\sigma_{12}d}\Phi_{-\e_{y}\lambda}(u_{11})y^{\alpha_{1}+\alpha_{2}+\lambda(1-\e_{y})}\\
        &=_{G} \left[\phi\left(\Phi_{-\alpha_{1}-\lambda}\left(u_{12}\right)\right)\right]_{F}\Phi_{-\e_{y}\lambda + \sigma_{12}d}(u_{11})y^{\alpha_{1}+\alpha_{2}+ \sigma_{12}d +\lambda(1-\e_{y})} \\
        &=_{G} \Phi_{-\e_{y}\lambda}\left(\left[\phi\left(\Phi_{-\alpha_{1}}\left(u_{12}\right)\right)\right]_{F}\right)\Phi_{-\e_{y}\lambda + \sigma_{12}d}(u_{11})y^{\alpha_{1}+\alpha_{2}+ \sigma_{12}d +\lambda(1-\e_{y})} \\
        &= \Phi_{-\e_{y}\lambda}\left(\left[\phi\left(\Phi_{-\alpha_{1}}\left(u_{12}\right)\right)\right]_{F}\Phi_{\sigma_{12}d}(u_{11})\right)y^{\alpha_{1}+\alpha_{2}+\sigma_{12}d+\lambda(1-\e_{y})},
    \end{align*}  
    \endgroup
    using \cref{prop:non-len p results 2}.
\end{proof}

\begin{prop}\label{prop:FB not needed}
    Let $u,v \in G_{\mathrm{mod}}$ be $\phi$-CR, such that $u \xmapsto{BF*} v$. Then there exists a sequence of FB $x$-shifts $v \xmapsto{FB*} w$, for some $w \in G_{\mathrm{mod}}$, such that $u \xleftrightarrow{y} w$.
\end{prop}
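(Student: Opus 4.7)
The plan is to reduce the statement to a single ``compound'' BF $x$-shift, for which a single compound FB $x$-shift suffices to recover a $y$-shift of $u$, and then to observe that both compound shifts can be realised as sequences of single-letter shifts.

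First, I would combine the given sequence $u \xmapsto{BF*} v$ into a single compound shift by induction on the length of the chain, using the identity from \cref{lemma:1} that $[\phi(u_{12} u_{13})]_F =_G [\phi(u_{12})]_F \cdot \Phi_{\sigma_{12} d}([\phi(u_{13})]_F)$. The outcome of this combination is that
\[
v =_G [\phi(\Phi_{-\alpha_1}(u_{12}))]_F \, \Phi_{\sigma_{12} d}(u_{11}) \, y^{\alpha_1+\alpha_2+\sigma_{12} d}
\]
for some splitting $u_1 = u_{11} u_{12}$, where $\sigma_{12}$ denotes the exponent sum of $u_{12}$ and $(u_1 y^{\alpha_1}, y^{\alpha_2})$ is the modular normal form of $u$. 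I would then take the compound FB shift from $v$ that moves the prefix $v_{11} := [\phi(\Phi_{-\alpha_1}(u_{12}))]_F$ of $v_1$ to the back, realised as a sequence of single-letter FB shifts by the same combination argument applied in reverse. Writing $v$ in modular normal form as $(v_1 y^{\beta_1}, y^{\beta_2})$ with $\beta_1 \equiv \alpha_1 + \sigma_{12} d \Mod{n}$, the formula from \cref{defn:x shift} gives
\[
w =_G \Phi_{\sigma_{12} d}(u_{11}) \, \Phi_{\beta_1}\!\left([\phi^{-1}(v_{11})]_F\right) \, y^{\beta_1+\beta_2 - \e_y \sigma_{12} d}.
\]

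Second, I would invoke part $(iii)$ of \cref{prop:non-len p results 2} in the form $\phi^{-1}\bigl([\phi(z)]_F\bigr) =_G z \cdot y^{-\e_y \sigma d}$ to conclude that $[\phi^{-1}(v_{11})]_F =_G \Phi_{-\alpha_1}(u_{12})$. Together with the congruence $\beta_1 - \alpha_1 \equiv \sigma_{12} d \Mod{n}$ (so $\Phi_{\beta_1 - \alpha_1}$ and $\Phi_{\sigma_{12} d}$ agree on $F_n$) and the identity $\beta_1 + \beta_2 = \alpha_1 + \alpha_2 + \sigma_{12} d$, the expression for $w$ collapses to
\[
w =_G \Phi_{\sigma_{12} d}(u_1) \, y^{\alpha_1+\alpha_2+\sigma_{12} d(1-\e_y)}.
\]
By \cref{prop:y shift formula}, this is precisely the $y$-shift of $u$ with parameter $\lambda = -\e_y \sigma_{12} d$: one checks $-\e_y \lambda = \sigma_{12} d$ and $\lambda(1-\e_y) = \sigma_{12} d(1-\e_y)$ by splitting on $\e_y = \pm 1$ (both sides vanish when $\e_y = 1$, and both equal $2\sigma_{12} d$ when $\e_y = -1$). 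Hence $u \xleftrightarrow{y} w$, as required.

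The main obstacle will be the careful bookkeeping in the first step: iterating single-letter BF shifts produces a $\Phi$-twisted concatenation of images $\phi_F(\cdot)$ that has to be unravelled into the single compound formula above, and the same must be done in reverse for FB shifts. Tracking the $\Phi$-indices, the exponent-sum contributions, and the changes to the modular $y$-component simultaneously is where essentially all the computational work lies; once these formulae are assembled, the conclusion follows from a routine application of \cref{prop:non-len p results 2} and \cref{prop:y shift formula}.
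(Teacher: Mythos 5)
Your proposal is correct and follows essentially the same route as the paper's proof: express $v$ as $[\phi(\Phi_{-\alpha_1}(u_{12}))]_F\,\Phi_{\sigma_{12}d}(u_{11})\,y^{\alpha_1+\alpha_2+\sigma_{12}d}$, apply one FB $x$-shift moving that prefix to the back, simplify via part $(iii)$ of \cref{prop:non-len p results 2} to obtain $\Phi_{\sigma_{12}d}(u_1)y^{\alpha_1+\alpha_2+\sigma_{12}d(1-\e_y)}$, and recognise this as a $y$-shift of $u$. The only cosmetic difference is that you make explicit (via \cref{lemma:1}) the bookkeeping that assembles and disassembles the compound shifts, which the paper leaves implicit.
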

\begin{proof}
    Let $u = (u_{11}u_{12}y^{\alpha_{1}}, y^{\alpha_{2}}) \in G_{\mathrm{mod}}$. By assumption we have \\ $v = \left[\phi\left(\Phi_{-\alpha_{1}}\left(u_{12}\right)\right)\right]_{F}\Phi_{\sigma_{12}d}(u_{11})y^{\alpha_{1}+\alpha_{2}+\sigma_{12}d}$, where $\sigma_{12}$ denotes the exponent sum of $u_{12}$. We can then apply the following sequence of FB $x$-shifts:
    \begingroup
    \addtolength{\jot}{0.5em}
    \begin{align*}
        v = \left[\phi\left(\Phi_{-\alpha_{1}}\left(u_{12}\right)\right)\right]_{F}\Phi_{\sigma_{12}d}(u_{11})y^{\alpha_{1}+\alpha_{2}+\sigma_{12}d} &\xmapsto{FB*} \Phi_{\sigma_{12}d}(u_{11})y^{\alpha_{1}+\sigma_{12}d}\phi^{-1}\left(\left[\phi\left(\Phi_{-\alpha_{1}}\left(u_{12}\right)\right)\right]_{F}\right)y^{\alpha_{2}} \\
        &=_{G} \Phi_{\sigma_{12}d}(u_{11})y^{\alpha_{1}+\sigma_{12}d}\Phi_{-\alpha_{1}}(u_{12})y^{-\e_{y}\sigma_{12}d}y^{\alpha_{2}} \\
        &=_{G} \Phi_{\sigma_{12}d}(u_{11}u_{12})y^{\alpha_{1}+\sigma_{12}d(1-\e_{y})}y^{\alpha_{2}},
    \end{align*}
    \endgroup
    using \cref{prop:non-len p results 2}. This can then be transformed back to $u$ by the following $y$-shift:
    \begingroup
    \addtolength{\jot}{0.5em}
    \begin{align*}
        \Phi_{\sigma_{12}d}(u_{11}u_{12})y^{\alpha_{1}+\sigma_{12}d(1-\e_{y})}y^{\alpha_{2}} &\xleftrightarrow{y} \Phi_{-\sigma_{12}d}(\Phi_{\sigma_{12}d}(u_{11}u_{12}))y^{\alpha_{1}+\sigma_{12}d(1-\e_{y})+\sigma_{12}d(\e_{y}-1)}y^{\alpha_{2}} \\
        &= u_{11}u_{12}y^{\alpha_{1}}y^{\alpha_{2}}. \tag*{\qedhere}
    \end{align*}
    \endgroup
\end{proof}
\begin{rmk}\label{rmk:construction finite set D}
    We are now able to compute our finite set of representatives for each twisted conjugacy class as follows. 

    Starting with $u_{0} = \left(u_{0_{1}}, u_{0_{2}}\right) \in G_{\mathrm{mod}}$ which is $\phi$-CR, compute a sequence of BF $x$-shifts until we reach an element $u_{i} = \left(u_{i_{1}}, u_{i_{2}}\right) \in G_{\mathrm{mod}}$, for some $i \in \Z_{\geq 0}$, such that $u_{0_{1}} = u_{i_{1}}$ (existence of this sequence follows from \cref{cor:existence of chain BF moves}). In particular, if we compute further BF $x$-shifts from $u_{i}$, we would obtain words with the same quotient elements as $\{u_{0}, u_{1}, \dots, u_{i-1}\}$.
     
    Then, for each element $u_{j} = \left(u_{j_{1}}, u_{j_{2}}\right) \in G_{\mathrm{mod}}$ in this sequence, where $0 \leq j \leq i-1$, we compute $y$-shifts by twisted conjugating by $y^{k}$, for each $k \in \{0, \dots, n-1\}$, to obtain elements of the form $u_{j_{k}} = \left(u_{j_{k_{1}}}, u_{j_{k_{2}}}\right) \in G_{\mathrm{mod}}$. Here if we were to compute further $y$-shifts, we would obtain words with the same quotient elements as $\{u_{j_{o}}, u_{j_{1}}, \dots, u_{j_{k-1}}\}$ for each $j$. We then define our finite set of representatives to be
    \[ \overline{\mathcal{D}_{u_{0}}} = \bigcup^{n-1}_{k=0}\bigcup^{i-1}_{j=0} u_{j_{k}}.
    \]
    This set is precisely all words obtained from $u_{0}$ via BF $x$-shifts and $y$-shifts, such that any further computations give us words which have the same quotient element as words in $\overline{\mathcal{D}_{u_{0}}}$. Note no further computations are required, since BF $x$-shifts and $y$-shifts commute by \cref{prop: commute x shifts y shifts}, and FB $x$-shifts are also not required by \cref{prop:FB not needed}.
    
    Now suppose $u_{0_{2}} = y^{s}$ and $u_{i_{2}} = y^{t}$, for some $s,t \in \Z$. We define the \emph{twisted shift} of $u_{0}$ to be the element $y^{z} \in \Z$, where $z = |s-t|$. In particular, we can write 
    \[ \mathcal{D}_{u} = \bigcup_{q \in \Z} \overline{\mathcal{D}_{u_{q}}},
    \]
    where each set $\overline{\mathcal{D}_{u_{q}}}$ is equal to $\overline{\mathcal{D}_{u_{0}}}$ up to the twisted shift. In other words, for every $q \in \Z$ there exists a fixed $\lambda \in \Z$, such that for every element $v = (v_{1}, v_{2}) \in \overline{\mathcal{D}_{u_{q}}}$, there exists $w = (w_{1}, w_{2}) \in \overline{\mathcal{D}_{u_{0}}}$ such that $v_{1} = w_{1}$ and $v_{2} = w_{2}y^{\lambda z}$.
\end{rmk}

\begin{exmp}\label{exmp:infinite set}
    We use \cref{exmp:non len p algorithm} to illustrate how to construct the set $\overline{\mathcal{D}_{u_{0}}}$ as defined in \cref{rmk:construction finite set D}. Here we consider the $\phi$-CR element $u_{0} = \left(x_{0}x_{2}y^{2}, 1_{G}\right) \in G_{\mathrm{mod}}$. \cref{fig:enter-label} describes the set $\overline{\mathcal{D}_{u_{0}}}$ of $x$-shifts and $y$-shifts from $u$, such that further shifts produce the same elements up to the twisted shift, which is $y^{24}$. Here the blue arrows represent BF $x$-shifts of single letters, and the red arrows represent $y$-shifts. \\
    \begin{figure}[h]
    \centering
        \begin{tikzpicture}
            \node at (0,0) {$(x_{0}x_{2}y^{2}, 1_{G})$};
            \node at (0, -1) {$(x_{1}x_{2}, y^{6})$};
            \draw[thick, blue, ->] (0,-0.3) -- (0,-0.7);
            \node at (4, 0) {$(x_{1}x_{0}y^{2}, 1_{G})$};
            \node at (8,0) {$(x_{2}x_{1}y^{2}, 1_{G})$};
            \draw[thick, red, <->] (1.2, 0) -- (2.8,0);
            \node at (4, -1) {$(x_{2}x_{0}, y^{6})$};
            \node at (8, -1) {$\left(x_{0}x_{1}, y^{6}\right)$};
            \node at (0, -2) {$(x_{2}x_{0}y, y^{9})$};
            \node at (4, -2) {$(x_{0}x_{1}y, y^{9})$};
            \node at (8, -2) {$(x_{1}x_{2}y, y^{9})$};
            \node at (0, -3) {$(x_{1}x_{1}y^{2}, y^{12})$};
            \node at (4, -3) {$(x_{2}x_{2}y^{2}, y^{12})$};
            \node at (8, -3) {$(x_{0}x_{0}y^{2}, y^{12})$};
            \node at (0, -4) {$(x_{0}x_{0},y^{18})$};
            \node at (4, -4) {$(x_{1}x_{1},y^{18})$};
            \node at (8, -4) {$(x_{2}x_{2},y^{18})$};
            \node at (0, -5) {$(x_{0}x_{2}y, y^{21})$};
            \node at (4, -5) {$(x_{1}x_{0}y, y^{21})$};
            \node at (8, -5) {$(x_{2}x_{1}y, y^{21})$};
            \draw[thick, red, <->] (5.2, 0) -- (6.8,0);
            \draw[thick, red, <->] (1.2, -1) -- (2.8,-1);
            \draw[thick, red, <->] (5.2, -1) -- (6.8, -1);
            \draw[thick, red, <->] (1.2, -2) -- (2.8,-2);
            \draw[thick, red, <->] (5.2, -2) -- (6.8, -2);
            \draw[thick, red, <->] (1.2, -3) -- (2.8,-3);
            \draw[thick, red, <->] (5.2, -3) -- (6.8, -3);
            \draw[thick, red, <->] (1.2, -4) -- (2.8,-4);
            \draw[thick, red, <->] (5.2, -4) -- (6.8, -4);
            \draw[thick, red, <->] (1.2, -5) -- (2.8,-5);
            \draw[thick, red, <->] (5.2, -5) -- (6.8, -5);
            \draw[thick, blue, ->] (4,-0.3) -- (4,-0.7);
            \draw[thick, blue, ->] (8,-0.3) -- (8,-0.7);
            \draw[thick, blue, ->] (0,-1.3) -- (0,-1.7);
            \draw[thick, blue, ->] (0,-2.3) -- (0,-2.7);
            \draw[thick, blue, ->] (0,-3.3) -- (0,-3.7);
            \draw[thick, blue, ->] (0,-4.3) -- (0,-4.7);
            \draw[thick, blue, ->] (4,-1.3) -- (4,-1.7);
            \draw[thick, blue, ->] (4,-2.3) -- (4,-2.7);
            \draw[thick, blue, ->] (4,-3.3) -- (4,-3.7);
            \draw[thick, blue, ->] (4,-4.3) -- (4,-4.7);
            \draw[thick, blue, ->] (8,-1.3) -- (8,-1.7);
            \draw[thick, blue, ->] (8,-2.3) -- (8,-2.7);
            \draw[thick, blue, ->] (8,-3.3) -- (8,-3.7);
            \draw[thick, blue, ->] (8,-4.3) -- (8,-4.7);
        \end{tikzpicture}
        \caption{Connections between elements of $\overline{\mathcal{D}_{u_{0}}}$}
        \label{fig:enter-label}
    \end{figure}
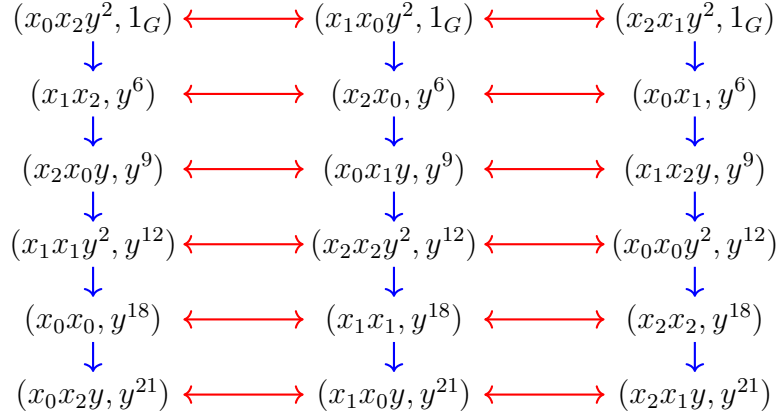\\
    For example, if we compute a BF $x$-shift of $(x_{0}x_{2}y, y^{21})$, we obtain $(x_{0}x_{2}y^{2}, y^{24})$. This is the same element as $(x_{0}x_{2}y^{2}, 1_{G})$, up to the twisted shift.
\end{exmp}
Let $u, v \in G_{\mathrm{mod}}$ be $\phi$-CR. By \cref{thm:non len p main result}, $u \sim_{\phi} v$ if and only if $\mathcal{D}_{u} = \mathcal{D}_{v}$. By \cref{rmk:construction finite set D}, this holds if and only if $v \in \overline{\mathcal{D}_{u_{q}}}$, for some $q \in \Z$. 

\begin{cor}\label{cor:Step 4 algorithm}
    Let $u = (u_{1}, u_{2}), v = (v_{1}, v_{2}) \in G_{\mathrm{mod}}$ be $\phi$-CR. Let $\overline{\mathcal{D}_{u}}$ be the finite set of $x$-shifts and $y$-shifts from $u$, as defined in \cref{rmk:construction finite set D}. Let $y^{z}$, for some $z \in \Z$, be the twisted shift of $u$. Then $u \sim_{\phi} v$ if and only if there exists $w = (w_{1}, w_{2}) \in \overline{\mathcal{D}_{u}}$ such that $w_{1} = v_{1}$ and $v_{2} = w_{2}y^{\lambda z}$, for some $\lambda \in \Z$. 
\end{cor}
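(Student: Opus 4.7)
The strategy is to combine \cref{thm:non len p main result} with the structural lemmas developed in this subsection. By the main theorem, $u \sim_{\phi} v$ if and only if $v \in \mathcal{D}_{u}$, so the goal reduces to showing that $\mathcal{D}_{u}$ is precisely the set of elements whose quotient agrees with some $w_{1}$ for $w \in \overline{\mathcal{D}_{u}}$ and whose Garside power differs from $w_{2}$ by a power of $y^{z}$.

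For the forward direction, I would take an arbitrary chain of $x$-shifts and $y$-shifts from $u$ to $v$ guaranteed by \cref{thm:non len p main result}, and then normalize it. First, \cref{prop:FB not needed} allows every FB $x$-shift in the chain to be rewritten as a sequence of BF $x$-shifts and $y$-shifts. Next, \cref{prop: commute x shifts y shifts} lets me commute all remaining $y$-shifts past the BF $x$-shifts, so that the chain takes the normalized form of a sequence of BF $x$-shifts followed by a single $y$-shift. By \cref{cor:existence of chain BF moves} and the construction in \cref{rmk:construction finite set D}, the BF $x$-shift subsequence is eventually periodic: after $i$ BF $x$-shifts the quotient element returns to $u_{0_{1}}$, while the Garside power picks up exactly the twisted shift $y^{z}$. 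Reducing the number of BF $x$-shifts modulo $i$ peels off the extra $y^{\lambda z}$, and reducing the final $y$-shift modulo $n$ lands the quotient at one of the representatives $u_{j_{k}}$. Thus $v$ equals some $(w_{1}, w_{2}) \in \overline{\mathcal{D}_{u}}$ modified only by multiplying the Garside component by $y^{\lambda z}$, which is exactly the desired conclusion.

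For the reverse direction, suppose $w = (w_{1}, w_{2}) \in \overline{\mathcal{D}_{u}}$ satisfies $w_{1} = v_{1}$ and $v_{2} = w_{2} y^{\lambda z}$. Since $\overline{\mathcal{D}_{u}} \subseteq \mathcal{D}_{u}$ by construction, we have $u \sim_{\phi} w$. Running the BF $x$-shift cycle $\lambda$ additional times starting from $w$ leaves the quotient unchanged but appends exactly $y^{\lambda z}$ to the Garside component, producing $v$. Hence $v \in \mathcal{D}_{u}$, and \cref{thm:non len p main result} yields $u \sim_{\phi} v$.

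The main obstacle is verifying that after normalization the only residual degree of freedom is indeed an integer multiple of $y^{z}$, and not some finer power of $y$. This rests on the identification in \cref{rmk:construction finite set D} of the twisted shift as the precise difference between the Garside powers of $u_{0}$ and $u_{i}$ after one complete BF cycle, combined with the fact that $y$-shifts act with period dividing $n$ on the quotient element side; these two facts together ensure that the full set $\mathcal{D}_{u}$ decomposes as the disjoint union of translates of $\overline{\mathcal{D}_{u}}$ by powers of $y^{z}$.
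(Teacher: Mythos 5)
Your proof is correct and takes essentially the same route as the paper. The paper's own proof is effectively the two sentences preceding the corollary: by \cref{thm:non len p main result}, $u \sim_\phi v$ iff $v \in \mathcal{D}_u$, and then \cref{rmk:construction finite set D} supplies the decomposition $\mathcal{D}_u = \bigcup_{q\in\Z}\overline{\mathcal{D}_{u_q}}$ together with the assertion that each $\overline{\mathcal{D}_{u_q}}$ matches $\overline{\mathcal{D}_{u_0}}$ up to a power of the twisted shift $y^z$. Your write-up unpacks the justification for that remark in exactly the way the paper itself signals (eliminating FB shifts via \cref{prop:FB not needed}, commuting $y$-shifts past BF shifts via \cref{prop: commute x shifts y shifts}, invoking the finite BF cycle of \cref{cor:existence of chain BF moves}), so the content is the same, just made more explicit. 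One small point worth noting: \cref{prop: commute x shifts y shifts} is stated in the direction that moves a $y$-shift from after to before a BF block, whereas your normalization pushes $y$-shifts to the end; since the proposition establishes that both orders produce the same element, the symmetric use is legitimate, but you should flag that the two-sided commutation is what is actually needed.
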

We can now construct an algorithm to solve the twisted conjugacy problem for even dihedral Artin groups, when $\phi \in \mathrm{Out}(G(m))$ is known.
\begin{prop}\label{cor:even cases main result}
    The $\mathrm{TCP}_{\phi}(G(m))$ is solvable, when $m$ is even, for all outer automorphisms of the form
    \[ \phi \colon x_{0} \mapsto x^{\e_{x}}_{0}y^{d}, \; y \mapsto y^{\e_{y}},
\]
where $\e_{x}, \e_{y} \in \{\pm 1\}$ and $d \in \Z$.
\end{prop}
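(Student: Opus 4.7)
The plan is to assemble the algorithm from the tools developed in Section~3 and verify termination. Given two input words $u, v \in X^{\ast}$, the first step is to rewrite each of them in the modular normal form of \cref{defn:mod normal form}, which is routine using the defining relations together with \cref{lem:relation x y}. This produces expressions $u =_G (u_1 y^{\alpha_1}, y^{\alpha_2})$ and $v =_G (v_1 y^{\beta_1}, y^{\beta_2})$ in $G_{\mathrm{mod}}$.

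Next, I would reduce both $u$ and $v$ to $\phi$-CR representatives. By \cref{prop:nonlen p cyc reduced} we can test whether the free component $u_1$ ends with the prescribed pattern, and if so twisted conjugate by the appropriate single generator (as demonstrated in the forward direction of that proposition's proof) to strictly shorten the free component. Iterating this a finite number of times (bounded by $l(u_1)$) yields a $\phi$-CR representative $\overline{u}$, and similarly $\overline{v}$ for $v$. If $l([\overline{u}]_F) \neq l([\overline{v}]_F)$, then $u \not\sim_\phi v$ by \cref{thm:non len p main result}, and we output NO.

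Assuming the free component lengths agree, the third step is to build the finite set $\overline{\mathcal{D}_{\overline{u}}}$ described in \cref{rmk:construction finite set D}. Concretely, I iterate BF $x$-shifts starting from $\overline{u}$ until the quotient element of the free component recurs, which must happen in finitely many steps by \cref{cor:existence of chain BF moves}. For each of the at most finitely many intermediate elements, I then take $y$-shifts by $y^k$ for $k = 0, \dots, n-1$, obtaining the complete list $\overline{\mathcal{D}_{\overline{u}}}$ together with the twisted shift power $y^z$. By \cref{prop: commute x shifts y shifts} and \cref{prop:FB not needed}, no further shifts (in particular no FB $x$-shifts) produce new equivalence classes modulo the twisted shift. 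Finally, applying \cref{cor:Step 4 algorithm}, we output YES if there exists $w = (w_1, w_2) \in \overline{\mathcal{D}_{\overline{u}}}$ with $w_1 = [\overline{v}]_F y^{\beta_1}$ and $[\overline{v}]_F$'s Garside power equal to $w_2 y^{\lambda z}$ for some $\lambda \in \Z$ (a linear arithmetic check), and NO otherwise.

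The main obstacle is verifying termination cleanly, and in particular that checking membership of $\overline{v}$ in $\mathcal{D}_{\overline{u}}$ reduces to a \emph{finite} check modulo the twisted shift $y^z$; this is precisely the content of \cref{rmk:construction finite set D} and \cref{cor:Step 4 algorithm}, so once these are in hand the proof reduces to observing that every step above is effective. Correctness in the end follows directly from \cref{thm:non len p main result}: $u \sim_\phi v$ if and only if $\overline{u} \sim_\phi \overline{v}$, if and only if $\overline{v}$ is reached from $\overline{u}$ by a finite sequence of $x$- and $y$-shifts, if and only if $\overline{v}$ lies in some $\overline{\mathcal{D}_{\overline{u}_q}}$, which is exactly what the algorithm tests.
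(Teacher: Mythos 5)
Your proposal is correct and follows essentially the same algorithm as the paper: normal form, twisted cyclic reduction via \cref{prop:nonlen p cyc reduced}, construction of the finite representative set $\overline{\mathcal{D}_{u}}$ with its twisted shift per \cref{rmk:construction finite set D}, and the membership test of \cref{cor:Step 4 algorithm}, with correctness resting on \cref{thm:non len p main result}. The only (harmless) differences are that you omit the paper's optional early-exit comparison of $y$-powers when $d = 0 \Mod{n}$ and fold the case where $\mathcal{D}_{u}$ is already finite (\cref{prop:when set finite}) into the general twisted-shift check.
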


\begin{proof}
    Our algorithm is as follows:
    
    \textbf{Input:}
    \begin{enumerate}
        \item $m \in \Z_{\geq 2}$ which is even.
        \item Words $u,v \in X^{\ast}$ representing group elements.
        \item $\phi \in \mathrm{Out}(G(m))$ as in \cref{even:all auto forms}.
    \end{enumerate}
    \textbf{Step 1: Modular normal forms}
    \begin{adjustwidth}{1.5cm}{}
        Write $u,v$ in modular normal forms $u_{m} = (u_{1}, u_{2}), v_{m} = (v_{1}, v_{2}) \in G_{\mathrm{mod}}$ (see \cref{defn:mod normal form}). 
    \end{adjustwidth}
    \textbf{Step 2: Compare $y$ powers}
    \begin{adjustwidth}{1.5cm}{}
        We can only apply this step when $d=0 \Mod{n}$. Check the conditions from \cref{cor:len p compare y} on $u_{2}$ and $v_{2}$. If these conditions are met, \textbf{Output} = \texttt{\color{blue}False}.
    \end{adjustwidth}
    \textbf{Step 3: Twisted cyclic reduction}
    \begin{adjustwidth}{1.5cm}{}
        Apply $x$-shifts to $u_{m}$ and $v_{m}$, to obtain $\phi$-CR elements $\overline{u}_{m} = (\overline{u}_{m_{1}}, \overline{u}_{m_{2}}) \in G_{\mathrm{mod}}$ and $\overline{v}_{m} = (\overline{v}_{m_{1}}, \overline{v}_{m_{2}}) \in G_{\mathrm{mod}}$ respectively, using the conditions from \cref{prop:nonlen p cyc reduced}. Let $\overline{u_{1}}$ and $\overline{v_{1}}$ denote the free component of $\overline{u}_{m}$ and $\overline{v}_{m}$ respectively. If $l(\overline{u_{1}}) \neq l(\overline{v_{1}})$, then \textbf{Output} = \texttt{\color{blue}False} by \cref{thm:non len p main result}. 
    \end{adjustwidth}
    \textbf{Step 4: Set of representatives}
    \begin{adjustwidth}{1.5cm}{}
        If $\phi$ satisfies the conditions from \cref{prop:when set finite}, then compute the set $\mathcal{D}_{u}$ of all $x$-shifts and $y$-shifts from $\overline{u}_{m}$. If $\overline{v}_{m} \in \mathcal{D}_{u}$, then \textbf{Output} = \texttt{\color{blue}True}. Otherwise, \textbf{Output} = \texttt{\color{blue}False}.
        
        Otherwise, compute the finite set $\overline{\mathcal{D}_{u}}$ of BF $x$-shifts and $y$-shifts from $\overline{u}_{m}$, as defined in \cref{rmk:construction finite set D}. Let $y^{z}$ be the twisted shift of $\overline{u}_{m}$, for some $z \in \Z$. If there exists $w = (w_{1}, w_{2}) \in \overline{\mathcal{D}_{u}}$, such that $w_{1} = \overline{v}_{m_{1}}$ and $w_{2} = \overline{v}_{m_{2}}y^{\lambda z}$, for some $\alpha \in \Z$, then \textbf{Output} = \texttt{\color{blue}True}. Otherwise, \textbf{Output} = \texttt{\color{blue}False}.
    \end{adjustwidth} 
\end{proof}
\begin{rmk}\label{rmk:justify general TCP DAm}
    To generalise this result, we consider $\phi \in \mathrm{Out}(G(m))$ as in \cref{even:all auto forms}, where the parameters $\e_{x}, \e_{y}$ and $d$ are unknown. We can assume $d = kn + c$ for some $k \in \Z$, $c \in \{0, \dots, n-1\}$. To determine if $u \sim_{\phi} v$, for some $u,v \in X^{\ast}$, then we only need to check all $\phi \in \mathrm{Out}(G(m))$ of the form
\[ \phi \colon x_{0} \mapsto x^{\e_{x}}_{0}y^{c}, \; y \mapsto y^{\e_{y}},
\]
where $\e_{x}, \e_{y} \in \{\pm 1\}$ and $c \in \{0, \dots, n-1\}$. To see this, note that if $d = c \Mod{n}$, then the finite set of twisted conjugacy representatives $\overline{\mathcal{D}_{u}}$ will be the same, with respect to $\phi$ with parameter $c$ or $d$, up to a power of the Garside element. This is due to the fact that the quotient element of a word $u \in G_{\mathrm{mod}}$ is preserved under both $x$-shifts and $y$-shifts when $d = 0 \Mod{n}$. 

By checking all possible cases for $\e_{x}, \e_{y}$ and $c \in \{0, \dots, n-1\}$ (i.e. $4n$ cases in total), whilst keeping track of changes to the Garside power for each case, we can determine all possibilities for when $u \sim_{\phi} v$, for some $\phi \in \mathrm{Out}(G(m))$. 
\end{rmk}
We now prove our main result.
\TCP
    \begin{proof}
    By \cite{crowe_twisted_2024}, it remains to prove this result for even dihedral Artin groups. Let $u,v \in X^{\ast}$, and let $\psi \in \mathrm{Inn}(G(m))$, i.e. $\psi(w) = g^{-1}wg$ for some $g \in X^{\ast}$. We need to check two cases. 
\begin{enumerate}
    \item $u \sim_{\psi} v$: Here $v =_{G} \psi(w)^{-1}uw = g^{-1}w^{-1}guw$. Rearranging gives $gv =_{G} w^{-1}(gu)w$, and so this case reduces to solving the conjugacy problem for $(gu, gv)$. The conjugacy problem in dihedral Artin groups is known to be solvable in linear time \cite[Proposition 3.1]{Holt2015}.
    \item $u \sim_{\psi \circ \phi} v$, where $\phi \in \mathrm{Out}(G(m))$ is of the form in \cref{even:all auto forms}: Here $v =_{G} \psi(\phi(w))^{-1}uw = g^{-1}\phi(w)^{-1}guw$. Rearranging gives $gv =_{G} \phi(w)^{-1}(gu)w$, and so this case reduces to solving the $\mathrm{TCP}_{\phi}(G(m))$ with respect to $\phi$. This is solvable by \cref{cor:even cases main result}.
\end{enumerate}
\end{proof}

\subsection{Complexity results}
We are able to show some results about the complexity of our algorithm to solve the twisted conjugacy problem for even dihedral Artin groups. Our first goal is to establish an upper bound for the size of the set $\overline{\mathcal{D}_{u}}$ of representatives generated in Step 4 of our algorithm, based on the length of our input word $u \in G_{\mathrm{mod}}$.

We first prove some results concerning the iteration of free components under $\phi$.
\begin{lemma}\label{lemma:powers of phi free gens}
    Let $i \in \{0, \dots, n-1\}$, $x_{i} \in X_{n}$, $r_{i} \in \{\pm 1\}$. Let
    \[\phi^{k}_{F}\left(x^{r_{i}}_{i}\right) = \underbrace{\phi_{F}\left(\phi_{F}\left(\dots \left(\phi_{F}\left(x^{r_{i}}_{i}\right)\right)\dots\right)\right)}_{k},
    \]
    for some $k \in \Z_{>0}$. 
    \begin{enumerate} 
        \item[(1)] If $\e_{y} = \e_{x} = 1$, then 
        \[ \phi^{2k}_{F}\left(x^{r_{i}}_{i}\right) = \begin{cases}
        x^{r_{i}}_{i}, & r_{i} = 1, \\
        x^{r_{i}}_{[i+2kd]}, & r_{i} = -1.
    \end{cases}
        \]
        \item[(2)] If $\e_{y} = 1, \e_{x} = -1$, then $\phi^{2k}_{F}(x^{r_{i}}_{i}) = x^{r_{i}}_{[i+kd]}$.
        \item[(3)] If $\e_{y} = -1, \e_{x} = 1$, then $\phi^{2k}_{F}(x^{r_{i}}_{i}) = x^{r_{i}}_{i}$.
        \item[(4)] If $\e_{y} = \e_{x} = -1$, then 
        \[ \phi^{2k}_{F}(x^{r_{i}}_{i}) = \begin{cases}
        x^{r_{i}}_{[i+kd]}, & r_{i} = 1,\\
        x^{r_{i}}_{[i-kd]}, & r_{i} = -1.
        \end{cases}
        \]
    \end{enumerate}
\end{lemma}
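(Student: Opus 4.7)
The plan is to handle each of the four sign combinations in $(\e_x, \e_y) \in \{\pm 1\}^2$ separately, since the definition of $\phi_F$ in \cref{defn:phi map free comp} already branches on the sign of $r_i$. For each case I will first compute $\phi_F^2(x_i^{r_i})$ directly from the definition, and then do a short induction on $k$ to lift this to $\phi_F^{2k}$. A useful preliminary observation is that in all cases, $\phi_F(x_i^{r_i})$ has exponent $\e_x r_i$, so after one application the ``branch'' of the definition that gets used may flip; this is why the natural iterate is $\phi_F^2$ rather than $\phi_F$.

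To illustrate, for case (1) where $\e_x = \e_y = 1$, I would observe that $\phi_F(x_i) = x_i$, so $\phi_F^{2k}(x_i) = x_i$ trivially; and $\phi_F(x_i^{-1}) = x_{[i+d]}^{-1}$ by definition, so iterating gives $\phi_F^{2k}(x_i^{-1}) = x_{[i+2kd]}^{-1}$. In case (2), with $\e_x = -1, \e_y = 1$, I get $\phi_F(x_i) = x_i^{-1}$ and then $\phi_F^2(x_i) = \phi_F(x_i^{-1}) = x_{[i+d]}$, so $\phi_F^{2k}(x_i) = x_{[i+kd]}$; symmetrically $\phi_F^{2k}(x_i^{-1}) = x_{[i+kd]}^{-1}$. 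Case (3) uses $\e_y = -1$ to produce an involution: $\phi_F(x_i) = x_{[-i]}$ and $\phi_F^2(x_i) = x_i$, and similarly on inverses, yielding fixed points. Case (4) combines both effects: $\phi_F^2(x_i) = x_{[i+d]}$ while $\phi_F^2(x_i^{-1}) = x_{[i-d]}^{-1}$, with the sign of $d$ flipped by the inverse branch of $\phi_F$.

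In each case the inductive step is routine: assuming the formula for $\phi_F^{2k}$, apply $\phi_F^2$ once more using the base computation, and use $[[\alpha] + \beta] = [\alpha + \beta]$ to combine the modular shifts. There is no real obstacle here; the only thing to be careful about is bookkeeping the index shift in the $r_i = -1$ branch (where the extra $+d$ appears) so that the parity of how many times we land in that branch is tracked correctly. This is precisely why cases (1) and (4) split by the sign of $r_i$, while cases (2) and (3) do not: in (2) the $r_i = 1$ and $r_i = -1$ branches are visited alternately, so every pair of applications picks up exactly one $+d$, whereas in (3) the involutive nature of $\e_y = -1$ together with $\e_x = 1$ cancels the $d$ shifts entirely.
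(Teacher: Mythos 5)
Your proposal is correct and follows essentially the same route as the paper: compute $\phi_F$ and then $\phi_F^2$ directly from \cref{defn:phi map free comp} in each sign case and iterate, the only difference being that the paper writes out only case (1) explicitly and asserts the rest follow similarly, whereas you sketch all four. Your case-by-case values of $\phi_F^2$ all check out against the definition.
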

\begin{proof}
    We prove the first relation, and the remaining cases follow similarly. If $\e_{y} = \e_{x} = 1$, then 
    \[ \phi_{F}(x^{r_{i}}_{i}) = 
    \begin{cases}
        x^{\e_{x}r_{i}}_{i}, & r_{i} = 1, \\
        x^{\e_{x}r_{i}}_{[i+d]}, & r_{i} = -1.
    \end{cases}
    \]
    Then 
    \[ \phi^{2}_{F}(x^{r_{i}}_{i}) = 
    \begin{cases}
        x^{r_{i}}_{i}, & r_{i} = 1, \\
        x^{r_{i}}_{[i+2d]}, & r_{i} = -1.
    \end{cases}
    \]
    This pattern continues to give our result. To guarantee we return to the same power of the free generator, we need to take an even number of iterations of $\phi$. 
\end{proof}

\begin{lemma}\label{lemma:free component complexity 2}
    Let $u = (u_{1}y^{\alpha_{1}}, y^{\alpha_{2}}) \in G_{\mathrm{mod}}$, and let $u_{1} = x^{p_{1}}_{i_{1}}\dots x^{p_{q}}_{i_{q}} \in F_{n}$. Let $k \in \Z_{\geq 0}$. 
    \begin{enumerate}
        \item[(1)] If $\e_{x} = 1$, then 
        \[ \left[\phi^{k}(u_{1})\right]_{F} =_{G} \phi^{k}_{F}\left(x^{p_{1}}_{i_{1}}\right)\Phi_{kdp_{1}}\left(\phi^{k}_{F}\left(x^{p_{2}}_{i_{2}}\right)\right)\dots \Phi_{kd(p_{1}+\dots + p_{q-1})}\left(\phi^{k}_{F}\left(x^{p_{q}}_{i_{q}}\right)\right).
    \]
    \item[(2)] If $\e_{x} = -1$, then 
    \begingroup
    \addtolength{\jot}{0.5em}
    \begin{align*}
        \left[\phi^{2k}(u_{1})\right]_{F} &=_{G} \phi^{2k}_{F}\left(x^{p_{1}}_{i_{1}}\right)\phi^{2k}_{F}\left(x^{p_{2}}_{i_{2}}\right) \dots \phi^{2k}_{F}\left(x^{p_{q}}_{i_{q}}\right), \\
        \left[\phi^{2k+1}(u_{1})\right]_{F} &=_{G}  \phi^{2k+1}_{F}\left(x^{p_{1}}_{i_{1}}\right)\Phi_{p_{1}d}\left(\phi^{2k+1}_{F}\left(x^{p_{2}}_{i_{2}}\right)\right)\dots \Phi_{(p_{1}+\dots + p_{q-1})d}\left(\phi^{2k+1}_{F}\left(x^{p_{q}}_{i_{q}}\right)\right).
    \end{align*}
    \endgroup
    \end{enumerate}
\end{lemma}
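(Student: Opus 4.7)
The proof will proceed by induction on $k$, combining two tools already developed: the one-step formula $(i)$ of \cref{prop:non-len p results 2} for $[\phi(w)]_F$, and the commutation relation $(i)$ of \cref{prop:non len p 1}, which upon iteration reads $\phi_F^{k}\circ \Phi_s = \Phi_{\e_y^{k} s}\circ \phi_F^{k}$.

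For the base case $k=1$, each formula reduces directly to \cref{prop:non-len p results 2}$(i)$: in case $(1)$ with $\e_x = 1$, the shift $t_j d$ appearing there coincides with $p_j d$ here, matching $\Phi_{d p_j}$; in case $(2)$ with $\e_x = -1$, the ``odd'' formula at $k=1$ is again \cref{prop:non-len p results 2}$(i)$, while the ``even'' formula is verified at $k=0$ as $u_1$ itself, and then checked at $k=2$ by computing $\phi^{2}(u_{1})$ directly and observing that the sign-flipping property of $\phi_F$ (recorded in \cref{lemma:powers of phi free gens}) causes the intermediate $y$-powers from the two iterations to cancel, leaving no $\Phi$ shift between adjacent letters.

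For the inductive step, write $\phi^{k}(u_{1}) =_G [\phi^{k}(u_{1})]_F \cdot y^{c_k}$ for an exponent $c_k$ determined by $k$, $d$, and the exponent sum $\sigma = \sum_j p_j$. Then $\phi^{k+1}(u_1) = \phi\bigl([\phi^{k}(u_1)]_F\bigr)\cdot y^{\e_y c_k}$, so it suffices to compute $\bigl[\phi\bigl([\phi^{k}(u_1)]_F\bigr)\bigr]_F$. Expand $[\phi^{k}(u_1)]_F$ via the inductive hypothesis as a product of letters of the form $\Phi_{s_j}\bigl(\phi_F^{k}(x_{i_j}^{p_j})\bigr)$, apply \cref{prop:non-len p results 2}$(i)$ once more to this expansion, and then slide each outer $\Phi_{s_j}$ past the newly applied $\phi_F$ using \cref{prop:non len p 1}$(i)$. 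A routine check shows the shift of the $j$-th letter updates to the claimed $\Phi$-index at step $k+1$.

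The principal obstacle will be the sign-alternation in case $(2)$. Because $\e_x = -1$ flips the exponent of each free generator at every application of $\phi_F$, the $y$-power contributed at the $(k+1)$-th iteration changes sign with $k$; this is exactly what forces the even/odd dichotomy in the statement, and in the even sub-case produces a telescoping cancellation of consecutive $y$-powers that suppresses the $\Phi$ shift between adjacent letters. Tracking this cancellation carefully using the explicit formulas of \cref{lemma:powers of phi free gens} for $\phi_F^{2k}(x_i^{r_i})$ is what ultimately closes the induction in both sub-cases.
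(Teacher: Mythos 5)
Your overall strategy --- induction on $k$ using \cref{prop:non-len p results 2}$(i)$ for the one-step expansion and \cref{prop:non len p 1}$(i)$ to commute the accumulated shifts past $\phi_{F}$ --- is exactly the argument the paper gives (the paper simply writes the induction as a single display). The gap is in the step you label ``a routine check'': it does not produce the claimed $\Phi$-indices unless $\e_{y}=1$. Concretely, \cref{prop:non len p 1}$(i)$ gives $\phi_{F}\circ\Phi_{s}=\Phi_{\e_{y}s}\circ\phi_{F}$, so when you apply $\phi$ to the letter $\Phi_{s_{j}}\bigl(\phi_{F}^{k}\bigl(x_{i_{j}}^{p_{j}}\bigr)\bigr)$ the old shift becomes $\e_{y}s_{j}$, and after moving the new $y$-powers to the right the updated shift is $\e_{y}s_{j}+(p_{1}+\dots+p_{j-1})d$, not $s_{j}+(p_{1}+\dots+p_{j-1})d$. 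In case $(1)$ this recursion yields the index $(1+\e_{y}+\dots+\e_{y}^{k-1})(p_{1}+\dots+p_{j-1})d$, which equals the claimed $k(p_{1}+\dots+p_{j-1})d$ only when $\e_{y}=1$; for $\e_{y}=-1$ it is $0$ or $(p_{1}+\dots+p_{j-1})d$ according to the parity of $k$. The same issue arises in case $(2)$: the even/odd dichotomy you correctly attribute to the sign-flipping of $\phi_{F}$ comes out as stated only for $\e_{y}=1$, while for $\e_{y}=-1$ the recursion gives $(-1)^{k+1}k(p_{1}+\dots+p_{j-1})d$. A concrete instance: $n=3$, $d=1$, $\e_{x}=1$, $\e_{y}=-1$, $u_{1}=x_{0}x_{1}$ gives $\phi^{2}(u_{1})=_{G}x_{0}x_{1}$, whereas the displayed formula in $(1)$ evaluates to $x_{0}\Phi_{2}\bigl(\phi_{F}^{2}(x_{1})\bigr)=x_{0}x_{2}$.

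To be fair, the paper's own proof makes the same elision: it writes $\Phi_{(k-1)dp_{1}}\bigl(\phi_{F}^{k}\bigl(x_{i_{2}}^{p_{2}}\bigr)\bigr)$ where the commutation relation actually produces $\Phi_{\e_{y}(k-1)dp_{1}}\bigl(\phi_{F}^{k}\bigl(x_{i_{2}}^{p_{2}}\bigr)\bigr)$. So your sketch is a faithful reconstruction of the intended argument, but as written the induction only closes under the additional hypothesis $\e_{y}=1$ (which is the only setting in which \cref{prop:D set quadratic size} invokes these formulas explicitly). You should either restrict to $\e_{y}=1$ or carry the factor $\e_{y}$ through the induction and state the corrected indices.
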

\begin{proof}
    Again we prove the first relation, and the second case follows similarly. Let $\sigma$ denote the exponent sum of $u_{1}$. Recall by \cref{prop:non-len p results 2} that
    \[ \phi(u_{1}) =_{G} \phi_{F}\left(x^{p_{1}}_{i_{1}}\right)\Phi_{p_{1}d}\left(\phi_{F}\left(x^{p_{2}}_{i_{2}}\right)\right)\Phi_{(p_{1}+p_{2})d}\left(\phi_{F}\left(x^{p_{3}}_{i_{3}}\right)\right)\dots \Phi_{(p_{1}+p_{2} + \dots + p_{q-1})d}\left(\phi_{F}\left(x^{p_{q}}_{i_{q}}\right)\right)y^{\sigma d}.
    \]
    Note if $\e_{x} = 1$, then $\phi\left(\phi^{k-1}_{F}\left(x^{p_{j}}_{i_{j}}\right)\right) = \phi^{k}_{F}\left(x^{p_{j}}_{i_{j}}\right)y^{p_{j}d}$ for all $i_{j} \in \{0, \dots, n-1\}$, $p_{j} = \pm 1$. Therefore 
    \begingroup
    \addtolength{\jot}{0.5em}
    \begin{align*}
        \phi^{k}(u_{1}) &=_{G} \phi^{k}_{F}\left(x^{p_{1}}_{i_{1}}\right)y^{dp_{1}}\Phi_{(k-1)dp_{1}}\left(\phi^{k}_{F}\left(x^{p_{2}}_{i_{2}}\right)\right)y^{dp_{2}}\Phi_{(k-1)(p_{1}+p_{2})d}\left(\phi^{k}_{F}\left(x^{p_{3}}_{i_{3}}\right)\right)\dots\\
        &\dots y^{dp_{q-1}}\Phi_{(k-1)(p_{1}+p_{2} + \dots + p_{q-1})d}\left(\phi^{k}_{F}\left(x^{p_{q}}_{i_{q}}\right)\right)y^{p_{q}d}y^{(k-1)\sigma d} \\
        &=_{G} \phi^{k}_{F}\left(x^{p_{1}}_{i_{1}}\right)\Phi_{kdp_{1}}\left(\phi^{k}_{F}\left(x^{p_{2}}_{i_{2}}\right)\right)\Phi_{kd(p_{1}+p_{2})}\left(\phi^{k}_{F}\left(x^{p_{3}}_{i_{3}}\right)\right)\dots \Phi_{kd(p_{1}+p_{2} + \dots + p_{q-1})}\left(\phi^{k}_{F}\left(x^{p_{q}}_{i_{q}}\right)\right)y^{k\sigma d}.
    \end{align*}
    \endgroup
    Taking the free component completes the proof. 
\end{proof}

\begin{defn}
    Let $u = (u_{1}y^{\alpha_{1}}, y^{\alpha_{2}}) \in G_{\mathrm{mod}}$ be $\phi$-CR, and let $q$ denote the length of $u_{1}$. We define a \emph{full $x$-shift} of $u$ to be a sequence of $q$ BF $x$-shifts of $u$.
\end{defn}

\begin{prop}\label{prop:D set quadratic size}
    Let $u = (u_{1}y^{\alpha_{1}}, y^{\alpha_{2}}) \in G_{\mathrm{mod}}$ be $\phi$-CR, and let $q$ denote the length of $u_{1}$. Let $\overline{\mathcal{D}_{u}}$ denote the finite set of minimal length representatives from Step 4 of our algorithm. Then $n \leq |\overline{\mathcal{D}_{u}}| \leq 2n^{2}q$, where $n$ denotes the rank of the free group given by the group presentation as in \cref{eq: even semidirect pres}.
\end{prop}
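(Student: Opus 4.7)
The plan is to bound $|\overline{\mathcal{D}_{u}}|$ by controlling separately the BF $x$-shift cycle length $i$ and, for each representative in that cycle, the $n$ $y$-shifts. Since $\overline{\mathcal{D}_{u}}$ is the union of at most $in$ elements given in \cref{rmk:construction finite set D}, it suffices to establish the lower bound $n$ directly and the upper bound $i \leq 2nq$ for the BF cycle.

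For the lower bound, I will show that the $n$ $y$-shifts $u_{0_{0}}, u_{0_{1}}, \dots, u_{0_{n-1}}$ of $u_{0}$ are pairwise distinct. By \cref{prop:y shift formula}, a $y$-shift by $y^{k}$ sends the free component $u_{1}$ to $\Phi_{-\e_{y}k}(u_{1})$. Since $\Phi_{s}$ acts on the nonempty geodesic $u_{1} = x^{p_{1}}_{i_{1}}\cdots x^{p_{q}}_{i_{q}}$ by shifting every index by $-s$ modulo $n$, we have $\Phi_{s}(u_{1}) = u_{1}$ only when $s \equiv 0 \pmod{n}$; hence the $n$ shifts yield $n$ distinct free components.

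For the upper bound, I will study the full $x$-shift map $F$ on quotient elements. Writing $F(u_{1}y^{\alpha_{1}}, y^{\alpha_{2}}) = (g(\Phi_{-\alpha_{1}}(u_{1}))\,y^{\alpha_{1}'}, \ldots)$, where $g(w) := [\phi(w)]_{F}$ and $\alpha_{1}' \equiv \alpha_{1} + \sigma d \pmod{n}$ with $\sigma$ the exponent sum of $u_{1}$, I will iterate using the commutation identity $\Phi_{s}\circ g = g\circ\Phi_{\e_{y}s}$ (a restatement of \cref{prop:non-len p results 2}(iv)) to deduce that the free component of $F^{k}(u)$ equals $g^{k}(\Phi_{-T_{k}}(u_{1}))$ for an integer $T_{k}$ assembled from the intermediate values $\alpha_{1}^{(0)}, \dots, \alpha_{1}^{(k-1)}$. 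By \cref{lemma:powers of phi free gens}, $\phi_{F}^{2n}$ is the identity on every generator in all four $(\e_{x},\e_{y})$-cases (the relevant index shifts of the form $kd$ or $2kd$ become multiples of $n$ at $k=n$), and then \cref{lemma:free component complexity 2} gives $g^{2n} = \mathrm{id}$ on $F_{n}$ once one notes that the $\Phi$-shifts appearing there also vanish modulo $n$ at $k = 2n$.

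It then remains to check that $T_{2n} \equiv 0 \pmod{n}$ and $\alpha_{1}^{(2n)} \equiv \alpha_{1} \pmod{n}$. From \cref{prop:non-len p results 2}(vi) the exponent sum is multiplied by $\e_{x}$ at each full $x$-shift, so $\alpha_{1}^{(k)}$ and $T_{k}$ are explicit arithmetic-or-alternating sums in $\alpha_{1}$ and $\sigma d$; a short case analysis over $(\e_{x},\e_{y}) \in \{\pm 1\}^{2}$ verifies that both vanish modulo $n$ at $k = 2n$. This gives $F^{2n}(u) = u$ as quotient elements, so the BF cycle closes after at most $2n$ full $x$-shifts, i.e.\ $i \leq 2nq$, and therefore $|\overline{\mathcal{D}_{u}}| \leq in \leq 2n^{2}q$. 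The main obstacle is the simultaneous bookkeeping of $g^{k}$, the intermediate shifts $\Phi_{-T_{k}}$, and the updates $\alpha_{1}^{(k)}$; but once the commutation identity between $g$ and $\Phi$ is applied, everything reduces to a finite check of arithmetic sums modulo $n$.
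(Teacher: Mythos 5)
Your proposal is correct and follows essentially the same route as the paper: bound $|\overline{\mathcal{D}_{u}}|$ by $X_{u}Y_{u}$ with $Y_{u}=n$ coming from the $y$-shifts and $X_{u}\leq 2nq$ coming from the fact that $2n$ full $x$-shifts return the quotient element, the latter verified via \cref{lemma:powers of phi free gens} and \cref{lemma:free component complexity 2} together with a case analysis over $(\e_{x},\e_{y})$ of the accumulated index shifts and $y$-exponents modulo $n$. Your lower-bound argument is in fact slightly more explicit than the paper's, since you actually verify that the $n$ $y$-shifted representatives are pairwise distinct (for nonempty $u_{1}$) rather than asserting it.
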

\begin{proof}
    Let $Y_{u}$ denote the number of $y$-shifts from $u$ to obtain an element $v = (v_{1}, v_{2}) \in G_{\mathrm{mod}}$, such that $v_{1} = u_{1}y^{\alpha_{1}}$. Similarly let $X_{u}$ denote the number of BF $x$-shifts from $u$ to obtain an element $w \in (w_{1}, w_{2}) \in G_{\mathrm{mod}}$, such that $w_{1} = u_{1}y^{\alpha_{1}}$. By definition, the size of $\overline{\mathcal{D}_{u}}$ is bounded above by $X_{u}Y_{u}$. For any $y$-shifts $v \in G_{\mathrm{mod}}$ obtained from $u$, $v$ is of the form $v =_{G} \Phi_{-\lambda}(u_{1})y^{\alpha_{1}+t(1-\e_{y})}y^{\alpha_{2}}$, for some $\lambda \in \Z$. Here $v$ has the same quotient element as $u$ precisely when $t=0 \Mod{n}$, and so $Y_{u} = n$. We now establish an upper bound for $X_{u}$.

    We consider what happens when we apply a full $x$-shift to $u$ (recall this is equivalent to $q$ BF $x$-shifts of $u$). We want to determine an upper bound on the number of full $x$-shifts required to obtain an element with the same quotient element as $u$. We note this process can be excessive, in that we may return to the same quotient element as $u$ via a smaller number of BF $x$-shifts of subwords of $u_{1}$. However, considering full $x$-shifts of $u$ will give us an upper bound. 
    
    Let $k(u)$ denote the $k$-th full $x$-shift of $u$. We consider different cases for $\e_{x}$ and $\e_{y}$. Let $\sigma$ denote the exponent sum of $u_{1}$. First suppose $\e_{y}=1$ and $\e_{x} = -1$. We calculate the $k$-th full $x$-shift of $u$ as follows:
    \begingroup
    \addtolength{\jot}{0.5em}
    \begin{align*}
        u &=_{G} u_{1}y^{\alpha_{1}}y^{\alpha_{2}}, \\
        1(u) &=_{G} \Phi_{-\alpha_{1}}\left(\left[\phi(u_{1})\right]_{F}\right) y^{\alpha_{1}+\sigma d}y^{\alpha_{2}}, \\
        2(u) &=_{G} \Phi_{-2\alpha_{1}-\sigma d}\left(\left[\phi^{2}(u_{1})\right]_{F}\right)y^{\alpha_{1}}y^{\alpha_{2}}, \\
        3(u) &=_{G} \Phi_{-3\alpha_{1}-\sigma d}\left(\left[\phi^{3}(u_{1})\right]_{F}\right)y^{\alpha_{1}+\sigma d}y^{\alpha_{2}}, \\
        & \quad \vdots
    \end{align*}
    \endgroup
    This leads to a more general result: 
    \begingroup
    \addtolength{\jot}{0.5em}
    \begin{align*}
         (2k)(u) &=_{G} \Phi_{-2k\alpha_{1}-k\sigma d}\left(\left[\phi^{2k}(u_{1})\right]_{F}\right)y^{\alpha_{1}}y^{\alpha_{2}}, \\
         (2k+1)(u) &=_{G} \Phi_{-(2k+1)\alpha_{1}-k\sigma d}\left(\left[\phi^{2k+1}(u_{1})\right]_{F}\right)y^{\alpha_{1}+\sigma d}y^{\alpha_{2}}.
    \end{align*}
    \endgroup
    We note that in order to guarantee our $y$-power of the quotient element returns to $y^{\alpha_{1}}$, then we need to take an even number of full $x$-shifts. For the free component, we first consider the change of indices: this equals $\Phi_{t}$, where $t \in \{-2k\alpha_{1}-k\sigma d, -(2k+1)\alpha_{1}-k\sigma d \}$. As we need to take an even number of full $x$-shifts, we only need to consider the case where $t = -2k\alpha_{1}-k\sigma d$. If $k = n$, then $t =0 \Mod{n}$ as required, and so we need a minimum of $2n$ full $x$-shifts. 

    Since $\e_{x} = -1$, we can use the following relations from \cref{lemma:free component complexity 2}:
    \begingroup
    \addtolength{\jot}{0.5em}
    \begin{align*}
       \left[\phi^{2k}(u_{1})\right]_{F} &=_{G} \phi^{2k}_{F}\left(x^{p_{1}}_{i_{1}}\right)\phi^{2k}_{F}\left(x^{p_{2}}_{i_{2}}\right) \dots \phi^{2k}_{F}\left(x^{p_{q}}_{i_{q}}\right), \\
        \left[\phi^{2k+1}(u_{1})\right]_{F} &=_{G}  \phi^{2k+1}_{F}\left(x^{p_{1}}_{i_{1}}\right)\Phi_{p_{1}d}\left(\phi^{2k+1}_{F}\left(x^{p_{2}}_{i_{2}}\right)\right)\dots \Phi_{(p_{1}+\dots + p_{q-1})d}\left(\phi^{2k+1}_{F}\left(x^{p_{q}}_{i_{q}}\right)\right).
    \end{align*}
    \endgroup
    After an even number of full $x$-shifts, we will return to $u_{1}$ precisely when $\phi^{2k}_{F}(x^{p_{i}}_{i}) = x^{p_{i}}_{i}$, for all $1 \leq i \leq q$. Recall from \cref{lemma:powers of phi free gens} that $\phi^{2k}(x^{p_{i}}_{i}) = x^{p_{i}}_{[i+kd]}$, which equals $x^{p_{i}}_{i}$ when $k = 0 \Mod{n}$, and so $2n$ full $x$-shifts is sufficient to return to $u_{1}$. Therefore $X_{u} \leq 2nq$. 

    Next suppose $\e_{y} = \e_{x} = 1$. Each $k$-th full $x$-shift of $u$ has the form
    \begingroup
    \addtolength{\jot}{0.5em}
    \begin{align*}
        u &=_{G} u_{1}y^{\alpha_{1}}y^{\alpha_{2}}, \\
        1(u) &=_{G} \Phi_{-\alpha_{1}}\left(\left[\phi(u_{1})\right]_{F}\right) y^{\alpha_{1}+\sigma d}y^{\alpha_{2}}, \\
        2(u) &=_{G} \Phi_{-2\alpha_{1}-\sigma d}\left(\left[\phi^{2}(u_{1})\right]_{F}\right)y^{\alpha_{1}+2\sigma d}y^{\alpha_{2}}, \\
        3(u) &=_{G} \Phi_{-3\alpha_{1}-3\sigma d}\left(\left[\phi^{3}(u_{1})\right]_{F}\right)y^{\alpha_{1}+3\sigma d}y^{\alpha_{2}}, \\
        &\quad \vdots
    \end{align*}
    \endgroup
    This gives a more general result:
    \[ k(u) =_{G} \Phi_{-k\alpha_{1}-\frac{k(k-1)}{2}\sigma d}\left(\left[\phi^{k}(u_{1})\right]_{F}\right)y^{\alpha_{1}+k\sigma d}y^{\alpha_{2}}.
    \]
    Here our $y$ exponent of the quotient element returns to $\alpha_{1}$ after $n$ full $x$-shifts. The change of indices equals $\Phi_{t}$, where $t = -k\alpha_{1}-\frac{k(k-1)}{2}\sigma d$. Again $t=0 \Mod{n}$ when $k = 2n$, and so we need $2n$ full $x$-shifts. For the free component, we have (by \cref{lemma:free component complexity 2})
    \[ [\phi^{k}(u_{1})]_{F} = \phi^{k}_{F}\left(x^{p_{1}}_{i_{1}}\right)\Phi_{kdp_{1}}\left(\phi^{k}_{F}\left(x^{p_{2}}_{i_{2}}\right)\right)\dots \Phi_{kd(p_{1}+\dots + p_{1-1})}\left(\phi^{k}_{F}\left(x^{p_{q}}_{i_{q}}\right)\right).
    \]
    Each $\Phi_{kd(p_{1}+\dots + p_{i})}$ term will leave the indices unchanged when $k = n$. For the generators, we have
    \[ \phi^{2k}_{F}(x^{p_{i}}_{i}) = 
    \begin{cases}
        x^{p_{i}}_{i}, & p_{i} = 1, \\
        x^{p_{i}}_{[i+2kd]}, & p_{i} = -1.
    \end{cases}
    \]
    by \cref{lemma:powers of phi free gens}. Again we need a minimum of $2n$ full $x$-shifts, and so $X_{u} \leq 2nq$. Finally, when $\e_{y} = -1$, the formulae for $k(u)$ remain the same, and we can apply a similar proof, using \cref{lemma:powers of phi free gens} and \cref{lemma:free component complexity 2}, to show $X_{u} \leq 2nq$ for all cases. This gives us our upper bound for $|\overline{\mathcal{D}_{u}}|$.

    For the lower bound, suppose $\overline{\mathcal{D}_{u}}$ consists only of elements obtained from $u$ via $y$-shifts only. That is, any BF $x$-shifts generate elements already found by $y$-shifts. We need a minimum of $n$ $y$-shifts, and so $|\overline{\mathcal{D}_{u}}| \geq n$. 
\end{proof}
When we consider certain conditions on $\phi \in \mathrm{Out}(G(m))$ and our input word $u \in G_{\mathrm{mod}}$, we find that this upper bound is indeed excessive. This can be seen from our previous example.
\begin{exmp}
    Recall \cref{exmp:infinite set}, where $|\overline{\mathcal{D}_{u}}| = 18$. By \cref{prop:D set quadratic size}, $|\overline{\mathcal{D}_{u}}| \leq 36$. However, we can prove the lower value directly, based on our input word $u \in G_{\mathrm{mod}}$ and our map $\phi$.

    Since $n=3$, then $Y_{u} = 3$. For $X_{u}$, note that $3\sigma d = 0 \Mod{n}$ and $-3\alpha_{1} = 0 \Mod{n}$. Also, $3d = 0 \Mod{n}$, and so we only need to compute 3 full $x$-shifts of $u$, to obtain an element with the same quotient element as $u$. Since the length of the free component of $u$ is 2, this gives us $X_{u} = 3 \times 2 = 6$. Therefore $|\overline{\mathcal{D}_{u}}| = 18$. 
\end{exmp}
This analysis allows us to determine the complexity of our TCP algorithm.

\Complex

\begin{proof}
    Linear complexity for odd cases was already shown in \cite{crowe_twisted_2024}, so it remains to consider our algorithm for even dihedral Artin groups.
    
    At Step 1, we need to write our input words in modular normal form. First, we rewrite our input words as geodesic normal forms as follows. Suppose $u = u_{1}\dots u_{q}$, where for all $1 \leq i \leq q$,
    \[ u_{i} = 
    \begin{cases}
        x^{r_{i}}_{i}, & r_{i} = \pm 1, i \in \{0, \dots, n-1\}, \\
        y^{r_{i}}, & r_{i} = \pm 1.
    \end{cases}
    \]
    Reading left to right through $u$, we apply the following rules:
    \begin{enumerate}
        \item[(i)] If $u_{i} = x^{r_{i}}_{i}, u_{i+1} = x^{-r_{i}}_{i}$, then freely cancel $u_{i}u_{i+1}$. Similarly cancel any terms of the form $y^{r_{i}}y^{-r_{i}}$. 
        \item[(ii)] If $u_{i} = y^{r_{i}}, u_{i+1} = x^{r_{i+1}}_{j}$, then rewrite $u_{i}u_{i+1}$ as $x^{r_{i+1}}_{[j-r_{i}]}y^{r_{i}}$.
    \end{enumerate}
    This gives us geodesic normal forms in linear time, by moving all $y$-terms from left to right in the word, and freely cancelling when possible. From here, a modular normal form can be obtained again in linear time, by rewriting our $y$ exponent. Step 2 of our algorithm is also linear.

    For Step 3, applying $x$-shifts until our free component satisfies the conditions from \cref{prop:nonlen p cyc reduced} is equivalent to removing the first and last generators of our free component, and adjusting indices and powers where necessary. This again takes linear time. 
    
    Finally by \cref{prop:D set quadratic size}, the size of $\overline{\mathcal{D}_{u}}$ is linear with respect to the length of $u$. To generate all words in $\overline{\mathcal{D}_{u}}$ therefore takes $|\overline{\mathcal{D}_{u}}|\cdot (q + C)$ time, where $C$ is a constant. This is due to the fact that each word we need to generate has length $q + C$, where $C$ is the length of the $y$-exponents, which is bounded by the twisted shift. To then test all possible $\phi \in \mathrm{Out}(G(m))$, we need to compute $\overline{\mathcal{D}_{u}}$ for all $4n$ cases (see \cref{rmk:justify general TCP DAm}), which gives us an overall complexity of quadratic time.  
\end{proof}

\section{Conjugacy problem for extensions of $G(m)$}\label{sec:extension}
For this section we will return to using our geodesic normal form, as defined in \cref{sec:semidirect pres}. The aim of this section is to prove the following.
\orbit
\begin{defn}
    Let $A \leq \mathrm{Aut}(G)$ for a group $G$. The \emph{orbit decidability problem} for $A$, denoted OD(A), takes as input two elements $u,v \in G$, and decides whether there exists $\phi \in A$ such that $v \sim \phi(u)$.  
\end{defn}
Using results from \cref{sec:prelims}, we can prove orbit decidability for all outer automorphisms of $G(m)$.
\begin{prop}\label{prop: even orbit dec}
    Let $m \in \Z_{\geq 2}$ be even and consider the outer automorphism of the form in \cref{even:all auto forms}, i.e.
    \[ \phi \colon x_{0} \mapsto x^{\e_{x}}_{0}y^{d}, \; y \mapsto y^{\e_{y}},
    \]
    where $\e_{x}, \e_{y} \in \{\pm 1\}, \; d \in \Z$. Let $u,v \in F_{n} \rtimes \Z$ be geodesic normal forms. Then it is decidable to determine if $v \sim \phi(u)$.
\end{prop}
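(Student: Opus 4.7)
The plan is to reduce the infinite family of candidate automorphisms $\phi = \phi_{\e_x, \e_y, d}$ (indexed by $(\e_x, \e_y, d) \in \{\pm 1\}^2 \times \Z$) to a finite explicit list, and then appeal to the ordinary conjugacy problem in $G(m)$, which is solvable in linear time by \cite[Proposition 3.1]{Holt2015}. Write $u = u_1 y^{\alpha}$ and $v = v_1 y^{\beta}$ in geodesic normal form with $u_1, v_1 \in F_n$, and let $\sigma$ denote the exponent sum of $u_1$. The assignment $\pi\colon G \to \Z$, $g_1 y^{t} \mapsto t$, is a homomorphism (since $F_n$ is normal in $G$) with abelian codomain, so conjugate elements of $G$ share the same $y$-exponent. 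By part $(i)$ of \cref{prop:non-len p results 2}, $\phi(u) =_G [\phi(u_1)]_{F} \cdot y^{\sigma d + \e_y \alpha}$, and hence $v \sim \phi(u)$ forces
\[ \beta = \sigma d + \e_y \alpha. \]

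If $\sigma \neq 0$, this linear relation has at most one integer solution $d = (\beta - \e_y \alpha)/\sigma$ for each choice of $(\e_x, \e_y)$, giving at most four candidate automorphisms. When $\sigma = 0$ the equation collapses to $\beta = \e_y \alpha$ and leaves $d$ unconstrained; this is the critical case. Here the $y$-exponent $\e_y \alpha$ of $\phi(u)$ is already independent of $d$, and the free part $[\phi(u_1)]_{F}$ also depends on $d$ only modulo $n$. Indeed, $[\phi(u_1)]_{F}$ is constructed in \cref{prop:non-len p results 2}$(i)$ from images $\phi_F(x^{t_j}_{k_j})$ twisted by shifts $\Phi_s$ where $s$ is an integer multiple of $d$; both $\phi_F$ (see \cref{defn:phi map free comp}) and $\Phi_s$ (see \cref{defn:map move y right}) reference the parameter $d$ only through the residue map $[\,\cdot\,]\colon \Z \to \{0, \ldots, n-1\}$. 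Consequently $\phi_{\e_x, \e_y, d}(u) =_G \phi_{\e_x, \e_y, d'}(u)$ whenever $d \equiv d' \pmod{n}$, so it suffices to test $d \in \{0, 1, \ldots, n-1\}$, producing at most $4n$ candidates overall.

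For each surviving candidate $\phi$, I would compute the geodesic normal form of $\phi(u)$ explicitly via \cref{prop:non-len p results 2}, and then invoke the conjugacy decision procedure for $G(m)$ from \cite[Proposition 3.1]{Holt2015} on the pair $(\phi(u), v)$; output \texttt{True} if any candidate succeeds and \texttt{False} otherwise. The main obstacle is the $\sigma = 0$ case, where the $y$-exponent projection gives no constraint on $d$; the resolution is to exploit that all index arithmetic in the presentation of $G(m) \cong F_n \rtimes \Z$ is carried out modulo $n$, forcing the map $d \mapsto \phi_{\e_x, \e_y, d}(u)$ to be literally $n$-periodic whenever $\sigma(u_1) = 0$, so that only finitely many values of $d$ need to be inspected.
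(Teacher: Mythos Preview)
Your proof is correct and shares the same opening move as the paper: both project onto the $y$-exponent to obtain the linear constraint $\beta = \sigma d + \e_y \alpha$, which pins down $d$ (for each of the four choices of $(\e_x,\e_y)$) whenever $\sigma \neq 0$.

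The two arguments diverge after that. For the residual conjugacy check, the paper reduces conjugacy in $F_n \rtimes \Z$ to a twisted conjugacy problem in $F_n$ via \cite[Prop.~4.1]{bogopolski_orbit_2009} and then invokes \cite[Theorem~1.5]{bogopolski_conjugacy_2006}; you instead appeal directly to the conjugacy algorithm for $G(m)$ from \cite[Proposition~3.1]{Holt2015}. Both are legitimate, and yours is arguably more economical since that result is already used elsewhere in the paper. More substantively, your treatment of the degenerate case $\sigma = 0$ is explicit: you observe that the free part $[\phi(u_1)]_F$ is assembled from $\phi_F$ and $\Phi_{cd}$, all of which see $d$ only modulo $n$, so that $\phi_{\e_x,\e_y,d}(u)$ is literally $n$-periodic in $d$ once $\sigma d$ vanishes from the $y$-exponent. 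The paper's proof simply asserts that one can ``calculate the value of $d$, if it exists'' from the exponent equation, which is only immediate when $\sigma \neq 0$; your periodicity argument fills that gap cleanly.
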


\begin{proof}
    Let $u = u_{1}y^{\alpha}$, where $u_{1} = x^{p_{1}}_{i_{1}}\dots x^{p_{q}}_{i_{q}} \in F_{n}$, and let $\sigma$ denote the exponent sum of $u_{1}$. We first consider the image $\phi(u)$. We have $\phi(u) =_{G} [\phi(u_{1})]_{F}\cdot y^{\sigma d + \e_{y}\alpha},$ where we recall $[\phi(u_{1})]_{F}$ denotes the free component of $\phi(u_{1})$, after moving all $y$ terms to the right. We now conjugate this element by some geodesic $w = w_{1}y^{\lambda} \in F_{n} \rtimes \Z$. This gives us
    \begin{align*}
        w^{-1}\phi(u)w &= y^{-\lambda}w^{-1}_{1}\cdot [\phi(u_{1})]_{F} y^{\sigma d + \e_{y}\alpha}\cdot w_{1}y^{\lambda} \\
        &=_{G} \Phi_{-\lambda}\left(w^{-1}_{1}\left[\phi(u_{1})\right]_{F}\right)\Phi_{-\lambda+\sigma d + \e_{y}\alpha}(w_{1}) \cdot y^{\sigma d + \e_{y}\alpha}.
    \end{align*}
    We note that conjugation does not affect the exponent of the $y$ term, and so if $v \sim \phi(u)$, where $v = v_{1}y^{\beta} \in F_{n} \rtimes \Z$, then $\beta = \sigma d + \e_{y}\alpha$. Given $u,v$ and $\phi$ of the form in \cref{even:all auto forms}, we know the value of $\alpha,\beta$ and $\sigma$, and $\e_{y} = \pm 1$. Hence we are able to calculate the value of $d$, if it exists. 

    Moreover, once $d$ is known, we can compute the image $[\phi(u_{1})]_{F}$ explicitly using \cref{prop:non-len p results 2}. Finally, deciding if $v \sim \phi(u)$ is equivalent to deciding if $v_{1} \sim_{\psi} [\phi(u_{1})]_{F}$ for some $\psi \in \mathrm{Aut}(F_{n})$ (see \cite[Prop. 4.1]{bogopolski_orbit_2009}). The twisted conjugacy problem is known to be decidable for free groups \cite[Theorem 1.5]{bogopolski_conjugacy_2006}, and so it is decidable whether $v \sim \phi(u)$.
\end{proof}

\begin{proof}[Proof of \cref{thm:orbit decid}]
Let $\varphi_{1}, \dots \varphi_{s} \in \mathrm{Aut}(G(m))$ be given, and consider $A = \langle \varphi_{1}, \dots \varphi_{s} \rangle \leq \mathrm{Aut}(G(m))$. For each $i = 1,\dots, s$, compute $w_{i} \in G(m)$ such that $\varphi_{i} = \gamma_{i}\phi_{i}$, where $\gamma_{i} \in \mathrm{Inn}(G(m))$ and $\phi_{i} \in \mathrm{Out}(G(m))$. Given two elements $u,v \in G(m)$, we want to decide whether $v \sim \varphi_{i}(u)$ for some $\varphi_{i} \in A$. This has already been determined when $m$ is odd (\cite{crowe_twisted_2024}).

Now suppose $m$ is even. By a similar argument as \cite{crowe_twisted_2024}, our problem reduces to deciding if $v$ is conjugate to $\phi(u)$, where $\phi \in \mathrm{Out}(G(m))$ is of the form in \cref{even:all auto forms}. This is decidable by \cref{prop: even orbit dec}. 
\end{proof}
The following theorem is immediate from \cref{thm:main result TCP solvable}, \cref{thm:orbit decid} and \cite[Theorem 3.1]{bogopolski_orbit_2009}.

\extension

\section*{Acknowledgements}
The author would like to thank Laura Ciobanu and Oorna Mitra for helpful discussions.
\bibliography{references}
\bibliographystyle{plain}
\uppercase{\footnotesize{Department of Mathematics, Heriot-Watt University, and the Maxwell Institute for Mathematical Sciences, Edinburgh, EH14 4AS.}}
\par 
\textit{Email address:} \texttt{ggc2000@hw.ac.uk}

\end{document}